\theoremstyle{plain}
\newtheorem{theorem}{Theorem}[section]
\newtheorem{lemma}[theorem]{Lemma}
\newtheorem{example}[theorem]{Example}
\theoremstyle{remark}
\newtheorem{remark}[theorem]{Remark}
\begin{document}
\allowdisplaybreaks[4]
\numberwithin{figure}{section}
\numberwithin{table}{section}
 \numberwithin{equation}{section}
%
\title[COIP Method for Fourth Order Dirichlet Boundary Control Problem]{Modified $C^0$ Interior Penalty Analysis for Fourth Order Dirichlet Boundary Control Problem and A Posteriori Error Estimate}

\author[S. Chowdhury]{Sudipto Chowdhury}
\address{Department of Mathematics, The LNM Institute of Information Technology Jaipur, Rajasthan - 302031.}
\email{sudipto.choudhary@lnmiit.ac.in}

\author[D. Garg]{Divay Garg}
\address{Department of Mathematics, Indian Institute of Technology Delhi, New Delhi - 110016.}
\email{divaygarg2@gmail.com}

\author[R. Shokeen]{Ravina Shokeen}
\address{Department of Mathematics, The LNM Institute of Information Technology Jaipur, Rajasthan - 302031.}
\email{ravinashokeen@outlook.com}
\begin{abstract}
We revisit the $L_{2}$ norm error estimate for the $C^{0}$ interior penalty analysis of fourth order Dirichlet boundary control problem. The $L_{2}$ norm estimate for the optimal control is derived under reduced regularity assumption and this analysis can be carried out on any convex polygonal domains. Residual based a-posteriori error bounds are derived for optimal control, state and adjoint state variables under minimal regularity assumptions. The estimators are shown to be reliable and locally efficient. The theoretical findings are illustrated by numerical experiments.

\end{abstract} 
\keywords{Optimal control, $C^0$-IP method, Dirichlet boundary control, A priori error estimates, A posteriori error estimates, Cahn-Hilliard boundary condition, Biharmonic equation, Finite element method}

\subjclass{65N30, 65N15}
\maketitle
\allowdisplaybreaks
\def\R{\mathbb{R}}
\def\cA{\mathcal{A}}
\def\cK{\mathcal{K}}
\def\cN{\mathcal{N}}
\def\p{\partial}
\def\O{\Omega}
\def\bbP{\mathbb{P}}
\def\cV{\mathcal{V}}
\def\cM{\mathcal{M}}
\def\cT{\mathcal{T}}
\def\cE{\mathcal{E}}
\def\bF{\mathbb{F}}
\def\bC{\mathbb{C}}
\def\bN{\mathbb{N}}
\def\ssT{{\scriptscriptstyle T}}
\def\HT{{H^2(\O,\cT_h)}}
\def\mean#1{\left\{\hskip -5pt\left\{#1\right\}\hskip -5pt\right\}}
\def\jump#1{\left[\hskip -3.5pt\left[#1\right]\hskip -3.5pt\right]}
\def\smean#1{\{\hskip -3pt\{#1\}\hskip -3pt\}}
\def\sjump#1{[\hskip -1.5pt[#1]\hskip -1.5pt]}
\def\jumptwo{\jump{\frac{\p^2 u_h}{\p n^2}}}

\section{Introduction}\label{intro}
Let $\Omega\subset\mathbb{R}^{2}$ be a bounded polygonal domain and $n$ denotes the outward unit normal vector to the boundary $\partial\Omega$ of $\Omega$. We assume the boundary $\partial \Omega$ to be the union of line segments $\Gamma_{k}(1\leq k\leq l)$ such that their interiors are pairwise disjoint in the induced topology. Consider the following optimal control problem:
\begin{equation}\label{model_problem}
    \min_{p\in Q} J(v,p):=\frac{1}{2}\|v-u_d\|^2+\frac{\alpha}{2}| p|^2_{H^2(\Omega)},
\end{equation}
 subject to 
 \begin{equation}\label{b2}
     \begin{split}
       \Delta^2v &=f \ \ \text{in}\ \  \Omega, \\
     \hspace{4mm}   v &=p \ \ \text{on} \ \ \partial \Omega,\\
     \partial v/\partial n&=0\ \ \text{on} \ \ \partial \Omega.
     \end{split}
 \end{equation}
Here $\alpha>0,\,f\in L_{2}(\Omega)$ and $u_{d}\in L_{2}(\Omega)$ denote the regularization parameter,  external force acting on the system and desired observation respectively and generic control, state variables are represented by $p$, $v$ respectively. The space of admissible controls is given by
\begin{align*}
Q=\{p\in H^2(\Omega):~\partial p/\partial n=0 \;\text{on}\;\ \partial\Omega\}.
\end{align*}
%
This article revisits the $L_{2}$-norm estimate for the optimal control derived in \cite{CHOWDHURYDGcontrol2015}. The analysis therein uses the fact that interior angles of the domain cannot exceed $120$ degrees, which is quite restrictive in applications.
This article extends the analysis to any convex polygonal domains. This extension is non-trivial in nature. Moreover a residual based a posteriori error bounds for optimal control, state and adjoint states are derived. The main novelties of this article are shortlisted below:
\begin{itemize}
	\item In Lemma \ref{lemma:importance}, we have proven the equality of two bilinear forms over a special class of Sobolev functions. It involves novel functional analytic techniques. This lemma plays a pivotal role in the analysis.
	
	\item In Lemma \ref{lemma:regularity} a special regularity result is proven for the optimal control.
	
	\item In Section \ref{Apea}, we prove residual based error estimators as the error bounds for optimal control, state and adjoint states. Moreover these estimators are shown to be reliable and locally efficient under minimal regularity assumption.
\end{itemize}

%
\par
Classical non-conforming methods and $C^0$-interior penalty (IP) methods have been two popular schemes to approximate the solutions of higher order equations within the finite element framework. In this connection, we refer to the works of \cite{32b1977,Gunzberger1991Stokes,EGHLMT2002DG3D,BSung2005DG4,40msb2007,31sm2007,BGS2010AC0IP,Gudi2010NewAnalysis,BNelian2010C0IPSingular,GN2011Sixth,Hu2012Morley,30ms2003,Carsten2013CR,BCGG2014IMA} and references therein. These methods are computationally more efficient compared to the conforming finite element methods. Several works have been carried out to approximate the solutions of higher order problems by employing mixed schemes as well. For the interested readers, we refer to \cite{38gnp2008} for a discontinuous mixed formulation discretization of fourth order problems. In this regard, we would like to remark that mixed schemes are complicated in general and have its restrictions (solution to the discrete scheme may converge to a wrong solution for a fourth order problem  if the solution is not $H^3$-regular).   \par
We notice that the literature for the finite element error analysis for higher order optimal control problems is relatively less. In \cite{wollner2012MixedBH}, a mixed finite element (Hermann-Miyoshi mixed formulation) analysis is proposed for a fourth order interior control problem. In this work, an optimal order $a\; priori$ error estimates for the optimal control, optimal state and adjoint state are derived followed by a superconvergence result for the optimal control. For a $C^0$-interior penalty method based analysis of a fourth order interior control problem, we refer to \cite{Gudi2014Control}. Therein an optimal order $a\; priori$ error estimate and a superconvergence result is derived for the optimal control on a general polygonal domain and subsequently a residual based $a\; posteriori$ error estimates are derived for the construction of an efficient adaptive algorithm. In \cite{Gudi2014DGControl}, abstract frameworks for both $a\; priori$ and $a\; posteriori$ error analysis of fourth order interior and Neumann boundary control problems are proposed. The analysis of this paper can be applicable for second and sixth order problems as well.  \\
We continue our discussion on higher order Dirichlet boundary control problems. In this connection, we note that the analysis of Dirichlet boundary control problem is more subtle compared to interior and Neumann boundary control problems. This is due to the fact that the control does not appear naturally in the formulation for Dirichlet boundary control problems. We refer to \cite{CHOWDHURYDGcontrol2015} for the $C^{0}$-interior penalty analysis of an energy space based fourth order Dirichlet boundary control problem where the control variable is sought from the energy space $H^{3/2}(\partial\Omega)$ (the definition of the space $H^{3/2}(\partial\Omega)$ is given in Section \ref{sec:model-problem1}). In this work, an optimal order {\em a priori} energy norm error estimate is derived and subsequently an optimal order $L_2$-norm error estimate is derived with the help of a dual problem. But the $L_{2}$-norm error estimate is derived under the assumption that interior angles of the domain should be less than $120$ degrees. This assumption is required to guarantee $H^{5/2+\epsilon}(\Omega)$ regularity for the optimal control. In this work, we revisit this estimate and extend the angle condition to $180$ degrees. Moreover a residual based a posteriori error estimators are derived and are proven to be reliable and locally efficient.

%
%
The rest of the article is organized as follows. In Section \ref{sec:discretization}, we introduce the $C^0$-interior penalty method and some general notations and concepts are explained which are important for the subsequent discussions.  
We start Section \ref{sec:model-problem1} by showing the equality of two bilinear forms over the space of admissible controls $Q$ which plays a crucial role in establishing the $L_{2}$-norm error estimate for the optimal control under reduced regularity assumption. Subsequently, the optimality system for the model problem \eqref{model_problem}-\eqref{b2} is stated and its equivalence with the corresponding energy space
based Dirichlet boundary control problem is discussed. We conclude this section with the discrete optimality system. In Section \ref{error analysis:energy}, we briefly discuss the optimal order energy norm estimates for the optimal control, optimal state and adjoint state variables. We derive the optimal order $L_2$-norm estimate for the optimal control in  Section \ref{error analysis:L^{2}}. In Section \ref{Apea} we derive residual based a posteriori error estimate under minimal regularity assumption. Error bounds for the control, state and adjoint states are derived and proven to be reliable and locally efficient.  Section \ref{sec:Numerical Examples} is devoted to the numerical experiments which illustrate our theoretical findings and the article is concluded in Section \ref{sec:Conclusions3}.

We follow the standard notion of spaces and operators that can be found in \cite{Ciarlet1978FEM,Grisvard1985Singularities,BScott2008FEM}. If $S\subset \bar{\Omega}$ then the space of all square integrable functions defined over $S$ is denoted by  $L_{2}(S)$. When $m>0$ is an integer $H^{m}(S)$ denotes the standard subspace of $L_{2}(S)$ defined by the class of those functions whose distributional derivatives upto $m$-th order also is in $L_{2}(S)$. If $s>0$ is not an integer then there exists an integer $m>0$ such that $m-1<s<m$. There $H^{s}(S)$ denotes the space of all $H^{m-1}(S)$ functions which belong to the fractional order Sobolev space $H^{s-m+1}(S)$.  When $S=\Omega$, then the $L_{2}(\Omega)$ inner product is denoted either by $(\cdot,\cdot)$ or by its usual integral representation and $L_{2}(\Omega)$ norm is denoted by $\|\cdot\|$, else it is denoted by $\|\cdot\|_{S}$. In this context, we mention that $H^{-s}(S)$ denotes the dual of $H^{s}_{0}(S)$ and this duality pairing is denoted by $\langle\cdot,\cdot\rangle_{-s,s,S}$ for positive real $s$.
\section{Quadratic $C^{0}$-Interior Penalty Method}\label{sec:discretization}
We introduce the $C^0$-interior penalty method in brief for the model problem \eqref{model_problem}-\eqref{b2}.
Let $\mathcal{T}_{h}$ be a simplicial, regular triangulation of
$\Omega$ \cite{Ciarlet1978FEM}. A generic triangle in this triangulation and its diameter are denoted by $T$ and $h_{T}$ respectively and its maximum over the triangles is called the mesh discretization parameter which is denoted by $h$.
The finite element spaces are given by
\begin{eqnarray*}
&V_{h}=\{v_{h}\in H^{1}_{0}(\Omega): v_{h}|_{T}\in P_{2}(T)\;\forall\, T\in \mathcal{T}_{h}\},\label{3:1}\\
&Q_{h}=\{p_{h}\in H^{1}(\Omega): p_{h}|_{T}\in P_{2}(T)\;\forall \,T\in \mathcal{T}_{h}\},\label{3:2}
\end{eqnarray*}
where $P_{2}(T)$ denotes the space of polynomials of degree less than or equal to two on $T$. Sides or edges of a triangle and their lengths are denoted by $e$ and $|e|$ respectively. Set of all edges in a triangulation are denoted by $\mathcal{E}_{h}$. An edge shared by two triangles of the triangulation is called an interior edge otherwise a boundary edge. The set of all interior and boundary edges are denoted by $\mathcal{E}^{i}_{h}$ and $\mathcal{E}^{b}_{h}$ respectively.
Any $e\in\cE_h^i$ can be written as
$e=\partial T_+\cap\partial T_-$ for two adjacent triangles $T_{+}$ and $T_{-}$. Let
$n_-$ represents the unit normal vector on $e$ pointing from $T_-$ to $T_+$
and set $n_+=-n_-$. For $s>0$, define $H^{s}(\Omega,\cT_h)$ by
\begin{align*}
H^{s}(\Omega,\cT_h)=\{v\in L_{2}(\Omega): v|_{T}\in H^{s}(T)\;\forall \, T\in \mathcal{T}_{h}\}.
\end{align*}
For $v\in H^2(\Omega,\cT_h)$, the jump of normal
derivative of $v$ across $e$ is given by
\begin{equation*}
 \sjump{\partial v/\partial n} =
 \left. \nabla v_+\right|_{e}\cdot n_+ +
 \left. \nabla v_-\right|_{e}\cdot n_-,
\end{equation*}
 where $v_\pm=v\big|_{T_\pm}$.
 Also, for all $v$ with $\Delta v\in H^1(\O,\cT_h)$, its average and jump across $e$ are given by
\begin{equation*}
 \smean{\Delta v} =
 \frac{1}{2}\left(\Delta v_{+}
 +\Delta v_{-} \right),
\end{equation*}
and
\begin{equation*}
 \sjump{\Delta v} =
 \left(\Delta v_{+}
 -\Delta v_{-}\right),
\end{equation*}
respectively.

\par
For the convenience of notation, we extend the definition of average and jump to the boundary edges. When $e\in \cE_h^b$, there is only one triangle $T$ sharing it. Let $n_e$ denotes the unit outward normal on $e$.
For any $v\in H^2(T)$, we set on $e$
\begin{equation*}
 \sjump{\partial v/\partial n}= \nabla v \cdot n_e,
\end{equation*}
 and for any $v$ with $\Delta v\in H^{1}(T)$,
\begin{equation*}
 \smean{\Delta v}=\Delta v.
\end{equation*}
With the help of the above defined quantities, the following mesh dependent bilinear forms, semi-norms and norms are defined which are used in the subsequent analysis.\\
The discrete bilinear form $a_{h}(\cdot,\cdot)$ on $Q_{h}\times Q_{h}$ is defined by
\begin{align*}
a_{h}(p_{h},r_{h})=&\sum_{T\in \mathcal{T}_{h}}\int_{T}\Delta p_{h} \Delta r_{h} dx+\sum_{e\in \mathcal{E}_{h}}\int_{e}\smean{\Delta p_{h}}\sjump{\partial r_{h}/\partial n_{e}}\;ds\notag\\
&\quad +\sum_{e\in \mathcal{E}_{h}}\int_{e}\smean{\Delta
r_{h}}\sjump{\partial p_{h}/\partial n_{e}}\;ds
+\sum_{e\in
\mathcal{E}_{h}}\frac{\sigma}{|e|}\int_{e}\sjump{\partial
p_{h}/\partial n_{e}}\sjump{\partial r_{h}/\partial
n_{e}}\;ds,\label{3:3}
\end{align*}
where the penalty parameter $\sigma\geq 1$.\\
Define the discrete energy norm on
$V_{h}$ by
\begin{eqnarray}\label{dn}
\|v_{h}\|^{2}_{h}=\sum_{T\in \mathcal{T}_{h}}\|\Delta v_{h}\|_{T}^{2}+\sum_{e\in \mathcal{E}_{h}}\frac{\sigma}{|e|}\|\sjump{\partial v_{h}/\partial n_{e}}\|_{e}^{2}\ \ \;\forall \,v_{h}\in V_{h}.\label{3:4}
\end{eqnarray}

\begin{equation}
\|v_{h}\|^{2}_{Q_{h}}=\sum_{T\in
	\mathcal{T}_{h}}\|\Delta v_{h}\|^{2}_{T}+\sum_{e\in
	\mathcal{E}_{h}}|e| \|\smean{\Delta v_{h}}\|_{e}^{2}+\sum_{e\in
	\mathcal{E}_{h}}\frac{\sigma}{|e|}\|\sjump{\partial v_{h}/\partial
	n_{e}}\|_{e}^{2}\ \ \;\forall\  v_{h}\in V_{h}.\label{3:6}
\end{equation}
Note that \eqref{3:4}, \eqref{3:6} define norms on $V_{h}$ and semi-norms on $Q_{h}$ (see \cite{BGGS2012CHC0IP}). Next, we define the energy norm $|\|\cdot\||_{h}$ on $Q_{h}$ by
\begin{equation}\label{3:5}
|\|p_{h}\||^{2}_{h}=\|p_{h}\|^{2}_{h}+\|p_{h}\|^{2}\ \ \;\forall \,p_{h}\in Q_{h}.
\end{equation}
By the use of trace inequality \cite[Section 1.6]{BScott2008FEM},
we observe that $\|\cdot\|_{Q_{h}}$ and $\|\cdot\|_{h}$ are equivalent semi-norms [resp. norms] on $Q_{h}$ [resp. $V_{h}$].\\
Moreover, from the discussions of \cite{BSung2005DG4}, it follows that $a_{h}(\cdot,\cdot)$ is coercive and bounded on $Q_{h}$ with respect to $\|\cdot\|_{h}$, $i.\;e.$, there exist positive constants $c, C>0$ independent of $h$ such that
\begin{eqnarray}\label{coercivity}
a_h(p_h,p_h)\geq c\|p_h\|^2_h \ \ \;\forall\, p_h\in Q_h,
\end{eqnarray}
\begin{eqnarray}\label{continuity}
|a_h(\psi_{h},\eta_{h})|\leq C\|\psi_{h}\|_h\|\eta_{h}\|_h \ \ \;\forall\, \psi_{h}, \eta_{h}\in Q_h.
\end{eqnarray}
From now onwards, we denote by $C$ a generic positive constant that is independent of the mesh parameter $h$.
\subsection{Enriching Operator}\label{eo}

Let $W_{h}$ be the Hsieh-Clough-Tocher macro finite element space
associated with the triangulation $\mathcal{T}_{h}$, \cite{Ciarlet1978FEM}.
Define $\tilde{Q}_{h}$ by
\begin{align*}
\tilde{Q}_{h}=Q\cap W_{h}.
\end{align*}
There exists a smoothing operator $E_{h}: Q_{h}\rightarrow\tilde{Q}_{h}$ which is also known as enriching operator satisfying the following approximation property. We refer to \cite{BGGS2012CHC0IP,BSung2005DG4} for a detailed discussion on enriching operators and proof of the following lemma.

%
%
%
\begin{lemma}\label{lem:EnrichApprx} Let $v \in Q_h$, there hold
\begin{align*}
\sum_{T \in
\cT_h}\left(h_T^{-4}\|E_hv-v\|^2_{T}+h_T^{-2}\|\nabla(E_hv-v)\|^2_{T}\right)&\leq
C\sum_{e \in \cE_h}\frac{1}{|e|}\Bigg\|\jump{\frac{\p v}{\p
n}}\Bigg\|^2_{e} 
\end{align*}
and
\begin{align*}
\sum_{T \in \cT_h}|E_hv-v|^2_{H^2(T)}\leq C\sum_{e \in
\cE_h}\frac{1}{|e|}\Bigg\|\jump{\frac{\p v}{\p
n}}\Bigg\|^2_{e}.
\end{align*}
\end{lemma}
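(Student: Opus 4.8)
The plan is to reduce the global estimate to a sum of local (element-wise) estimates by the standard device of comparing $v \in Q_h$ and $E_h v \in \tilde Q_h$ against a single well-chosen intermediate quantity, and then to exploit that the defect $v - E_h v$ is controlled entirely by the interelement normal-derivative jumps of $v$. First I would recall the construction of the enriching operator $E_h : Q_h \to \tilde Q_h \subset W_h$: on each internal degree of freedom of the Hsieh--Clough--Tocher macro-element (vertex values, vertex gradients, and normal derivatives at edge midpoints), the value of $E_h v$ is taken to be the average of the values of $v$ taken from the triangles sharing that degree of freedom, and at boundary degrees of freedom one imposes the constraints defining $Q$ (here $\partial(E_h v)/\partial n = 0$ on $\partial\Omega$, consistent with $v \in Q_h \subset H^1(\Omega)$ but with possibly nonzero normal jumps). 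The key point is that a degree of freedom where $v$ is already single-valued contributes nothing to $E_h v - v$.

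The main step is a local scaling argument on each $T \in \cT_h$. On the reference triangle, $E_h v - v$ restricted to $T$ is a piecewise polynomial (on the HCT sub-triangulation) whose finitely many HCT degrees of freedom are exactly the differences between nodal values of $v$ on $T$ and the corresponding averaged values; each such difference is a jump of $v$ or of $\nabla v$ at a vertex or an edge midpoint of $T$, taken over the patch of triangles meeting that node. Since $v$ is continuous across edges (as $v \in H^1_0$, or $H^1$), the vertex-value jumps vanish and the tangential-derivative jumps vanish, so only the \emph{normal}-derivative jumps $\jump{\partial v/\partial n}$ on the edges of the patch survive. A norm-equivalence on the finite-dimensional reference element bounds $\|E_h v - v\|^2_{\hat T}$, $\|\nabla(E_h v - v)\|^2_{\hat T}$ and $|E_h v - v|^2_{H^2(\hat T)}$ by the sum of squares of these nodal jump quantities; mapping back to $T$ introduces the powers $h_T^{-4}$, $h_T^{-2}$, $h_T^0$ respectively (the second derivative being the natural scale), and a discrete trace/inverse inequality converts the pointwise edge-midpoint normal-derivative jumps into the integral quantities $\frac{1}{|e|}\|\jump{\partial v/\partial n}\|^2_e$. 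Summing over $T$ and using shape-regularity (each edge belongs to a bounded number of elements, and $h_T \simeq |e|$ for $e \subset \partial T$) gives both displayed inequalities at once.

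The main obstacle is purely bookkeeping rather than conceptual: one must check carefully that every HCT degree of freedom of $E_h v - v$ on a given triangle is expressible through normal-derivative jumps across edges of that triangle's vertex-patch (and not, say, through jumps that cannot be bounded by the right-hand side), and that the boundary modification near $\partial\Omega$ — where the constraint $\partial(E_h v)/\partial n = 0$ is imposed — is likewise controlled by $\frac{1}{|e|}\|\jump{\partial v/\partial n}\|^2_e$ with $e$ a boundary edge (using the convention $\jump{\partial v/\partial n} = \nabla v \cdot n_e$ there). Once the local estimate is in place, the global assembly is routine. Since this is a known result, I would in fact simply cite \cite{BGGS2012CHC0IP,BSung2005DG4} for the full argument and record the statement as above.
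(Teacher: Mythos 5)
Your proposal is correct and consistent with the paper's treatment: the paper does not prove this lemma itself but cites \cite{BGGS2012CHC0IP,BSung2005DG4}, and your sketch (averaging of HCT degrees of freedom, local norm equivalence and scaling, with only the normal-derivative jumps surviving by the $C^0$ continuity of $v$) is precisely the standard argument given in those references.
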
 
\section{Auxiliary Results}\label{sec:model-problem1}
In this section, we prove the equality of two bilinear forms over the space $Q$ which plays a key role in obtaining the $L_{2}$-norm error estimate on convex, polygonal domains. Subsequently, we discuss the existence and uniqueness results for the solution to the optimal control problem and derive the corresponding optimality system. At the end of this section, we remark that this problem is equivalent to its corresponding Dirichlet control problem \cite{CHOWDHURYDGcontrol2015}.\\ 
Define a bilinear form $a(\cdot,\cdot):Q\times Q\rightarrow \mathbb{R}$ by
\begin{eqnarray}\label{bilinear_form}
a(p,r)=\int_{\Omega} \Delta p\,\Delta r\;dx.
\end{eqnarray}
The following lemma proves the equality of two bilinear forms over $Q$.
\begin{lemma}\label{lemma:importance}
Given $p,\;r\in Q$ we have 
\begin{align*}
a(p,r)=\int_{\Omega} D^2p:D^2r\;dx,
\end{align*} 
where $D^2p:D^2r =\sum_{i,j=1,2}\frac{\partial^{2} p}{\partial x_i\partial x_j}\frac{\partial^2r}{\partial x_{i}\partial x_{j}}$ is the Hessian product.
\end{lemma}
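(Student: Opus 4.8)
The plan is to reduce the asserted identity to the case in which one of the two arguments is smooth up to $\partial\Omega$, and to pass to general $p,r\in Q$ by approximation; the approximation step is the substantive one. For \emph{Step 1}, assume first $p\in C^\infty(\bar\Omega)\cap Q$ and let $r\in Q$ be arbitrary. Integrating by parts twice, each time transferring a derivative from $r$ to $p$ (every step legitimate because $p$ is smooth and $r\in H^2(\Omega)$), one obtains
\begin{equation*}
\int_\Omega D^2p:D^2r\,dx=\int_\Omega\Delta p\,\Delta r\,dx-\int_{\partial\Omega}\Delta p\,\frac{\partial r}{\partial n}\,ds+\int_{\partial\Omega}\nabla r\cdot(D^2p\,n)\,ds .
\end{equation*}
Because $r\in Q$ we have $\partial r/\partial n=0$ on $\partial\Omega$, so the second boundary integral drops. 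For the third, fix one straight side $\Gamma_k$; there the unit normal $n\equiv n_k$ and a unit tangent $\tau_k$ are constant, and writing $\nabla r=(\partial r/\partial n)\,n_k+(\partial r/\partial\tau)\,\tau_k$ gives
\begin{equation*}
\nabla r\cdot(D^2p\,n_k)=\frac{\partial r}{\partial n}\,\big(n_k\cdot D^2p\,n_k\big)+\frac{\partial r}{\partial\tau}\,\big(\tau_k\cdot D^2p\,n_k\big) .
\end{equation*}
The first term vanishes since $\partial r/\partial n=0$ on $\Gamma_k$; the second vanishes since $\tau_k\cdot D^2p\,n_k=\partial_\tau\!\big(\partial p/\partial n\big)=0$ on $\Gamma_k$, the tangential derivative of the boundary trace commuting with the trace (as $\tau_k$ is a fixed direction and $\nabla p\in C^\infty(\bar\Omega)$) and $\partial p/\partial n$ being identically $0$ there. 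Hence every boundary integral vanishes and $a(p,r)=b(p,r)$. Since $a$ and $b$ are symmetric, the identity also holds whenever $r\in C^\infty(\bar\Omega)\cap Q$ and $p\in Q$ is arbitrary.

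\emph{Step 2 (density of $C^\infty(\bar\Omega)\cap Q$ in $Q$).} This is the heart of the proof. Let $r\in Q$, with Cauchy data $(\gamma_0r,\gamma_1r)=(g,0)$, where $g|_{\Gamma_k}\in H^{3/2}(\Gamma_k)$ and $g$ is continuous at the vertices. I would first approximate $g$ in the trace norm by boundary data $\tilde g$ that is $C^\infty$ on each closed side, continuous at the vertices, and with $\partial_\tau\tilde g$ supported away from the vertices; this rests on the functional-analytic fact that $C_c^\infty(\Gamma_k)$ is dense in $H^{1/2}(\Gamma_k)$, i.e. $H^{1/2}_0(\Gamma_k)=H^{1/2}(\Gamma_k)$, applied to $\partial_\tau g$, followed by one integration and a small correction to restore vertex continuity. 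Then I would lift $(\tilde g,0)$ to a function $s\in C^\infty(\bar\Omega)\cap Q$: away from the vertices this is a routine normal-extension of $\tilde g$ with prescribed zero normal derivative, while near a vertex such a smooth lift exists exactly because a smooth function with vanishing normal derivative on both incident straight edges must have vanishing gradient at that vertex — a constraint now compatible with the data since $\partial_\tau\tilde g=0$ near the vertices. Controlling $\|s-r\|_{H^2(\Omega)}$ by the trace distance $\|\tilde g-g\|$ through the boundedness of a right inverse of the trace operator yields the density.

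\emph{Step 3 (conclusion).} The forms $a$ and $b$ are bounded on $H^2(\Omega)\times H^2(\Omega)$. Given $p,r\in Q$, choose $p_n\in C^\infty(\bar\Omega)\cap Q$ with $p_n\to p$ in $H^2(\Omega)$ by Step 2; Step 1 gives $a(p_n,r)=b(p_n,r)$, and letting $n\to\infty$ yields $a(p,r)=b(p,r)$.

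The only delicate point is Step 2. The integration by parts in Step 1 is routine, and the edge-by-edge cancellation is precisely where the hypothesis $\partial p/\partial n=\partial r/\partial n=0$ together with the flatness of the sides is used. The obstacle is that naive mollification destroys the Neumann-type constraint, and repairing it at the corners of the polygon forces one through the identity $H^{1/2}_0(\Gamma_k)=H^{1/2}(\Gamma_k)$ and a corner-adapted lifting — the ``novel functional-analytic'' ingredient announced in the introduction.
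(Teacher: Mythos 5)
Your Step 1 is correct, and your overall strategy is genuinely different from the paper's: the paper explicitly avoids any density claim for $Q$ (its remark after the lemma states that "the similar argument is not going to work for the space $Q$") and instead approximates $p$ by the enriched interpolants $E_hI_h(p)\in Q\cap W_h$, extracts a weak $H^2(\Omega)$ limit via Banach--Alaoglu together with a compactness argument in the space $X=\{\phi\in H^1(\Omega):\Delta\phi\in L_2(\Omega)\}$, and carries out the integration by parts against a sequence $\phi_m\in C^\infty(\bar\Omega)$ approximating $r$ only in $H^2(\Omega)$, so that the surviving boundary term carries the factor $\partial\phi_m/\partial n\to\partial r/\partial n=0$. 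In that scheme nobody ever needs a smooth function satisfying the Neumann constraint exactly, which is precisely the difficulty your route must confront head-on.

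The genuine gap is in Step 2, at the final sentence. You claim to bound $\|s-r\|_{H^2(\Omega)}$ by the trace distance $\|\tilde g-g\|$ via the boundedness of a right inverse of the trace operator. This cannot be correct as stated: two functions of $H^2(\Omega)$ with identical Cauchy data differ by an arbitrary element of $H^2_0(\Omega)$, so when $s$ is merely \emph{some} lift of $(\tilde g,0)$ and $r$ is \emph{some} element of $Q$ with data $(g,0)$, no estimate of the form $\|s-r\|_{H^2(\Omega)}\le C\|\tilde g-g\|$ is available (take $\tilde g=g$ and $r=s+z$ with $z$ a large interior bump). To close the argument you must write $r=R(g,0)+z$ with $R$ a bounded right inverse of the Cauchy trace and $z\in H^2_0(\Omega)$, approximate $z$ separately by $C^\infty_c(\Omega)$ functions, and arrange that $R$ (or an explicit edge-by-edge lifting with corner corrections) carries smooth compatible data to $C^\infty(\bar\Omega)\cap Q$, so that $R(\tilde g_m,0)\to R(g,0)$ in $H^2(\Omega)$; none of this is in the proposal. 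A second, related issue: the approximation of the boundary data must converge in the norm of the \emph{compatible} trace space, which at each vertex includes Grisvard's weighted integral $\int_0^\delta|\partial_\tau \tilde g_j-\partial_\tau g_j|^2\,\sigma^{-1}d\sigma$; the fact you invoke (density of $C^\infty_c(\Gamma_k)$ in $H^{1/2}(\Gamma_k)$) does not give convergence in that norm. What you actually need is that $\partial_\tau g$ belongs to the Lions--Magenes space $H^{1/2}_{00}$ near each vertex (true, precisely because $(g,0)$ is an honest $H^2$ Cauchy pair) together with density of $C^\infty_c$ in $H^{1/2}_{00}$. Both repairs are plausible, but as written Step 2 does not establish the density on which Steps 1 and 3 rely.
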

\begin{remark}
	We note that the equality of two above mentioned bilinear forms can be proved on $H^{2}_{0}(\Omega)$ by density arguments. But note that the similar argument is not going to work for the space $Q$.
\end{remark}

\begin{proof}
Introduce a new function space $X$ defined by $$X=\{\phi\in H^{1}(\Omega): \;\Delta\phi\in L_{2}(\Omega)\},$$ endowed with the inner product $(\cdot,\cdot)_{X}$ given by 
\begin{align*}
 (\phi_{1},\phi_{2})_{X}=(\phi_{1},\phi_{2})+(\nabla\phi_{1},\nabla\phi_{2})+(\Delta\phi_{1},\Delta\phi_{2}).
\end{align*} 
 It is easy to check that $X$ is a Hilbert space with respect to $(\cdot, \cdot)_{X}$ (see \cite{Grisvard1992Singularities}). Let $E_{h}I_{h}(p)$ denotes the enrichment of $I_{h}(p)$ defined in subsection \ref{eo} where $I_{h}(p)$ is the Lagrange interpolation of $p$ onto the finite element space $Q_h$, \cite[Chapter 4]{BScott2008FEM}. A use of approximation properties of $I_h$ \cite{BScott2008FEM}, Lemma \ref{lem:EnrichApprx} and  triangle inequality yields $\|E_hI_h(p)\|_{X}\leq C\|p\|_{H^2(\Omega)}$. Banach Alaoglu theorem asserts the existence of a subsequence of $\{E_hI_h(p)\}$ (still denoted by $\{E_hI_h(p)\}$ for notational convenience) converging weakly to some $z\in X$. Continuity of first normal trace operator $\frac{\partial}{\partial n}:H(div,\Omega)\rightarrow H^{-\frac{1}{2}}(\partial\Omega)$, \cite{42t} implies the closedness of $Z$ where $Z=ker\left({\frac{\partial}{\partial n}}\right)$ that is $Z=\{z\in X: {\frac{\partial z}{\partial n}}=0\}$ Therefore, completeness of $Z$ implies $z\in Z$. Given $\phi\in Z$, consider the following problem
\begin{equation*}
\begin{split}
-\Delta\psi &=-\Delta \phi~\text{in} ~\Omega,\\   
\partial\psi/\partial n &=0~\text{on}~\partial\Omega.
\end{split}
\end{equation*}
By the elliptic regularity theory, we have $|\psi|_{H^{1+s}(\Omega)}\leq C\|\phi\|_{X}$ for some $s>0$, depending upon the interior angle of the domain (see \cite{GR:1986:Book}) which further implies $|\phi|_{H^{1+s}(\Omega)} \leq C\|\phi\|_{X}\; \text{or} \;\|\phi\|_{H^{1+s}(\Omega)}\leq C\|\phi\|_{X}\;\forall\phi\in Z$. It is easy to check that $Z$ is compactly embedded in $H^1(\Omega)$. 
Therefore, $E_hI_h(p)$ converges strongly to $z$ in $H^1(\Omega).$ 
A combination of Lemma \ref{lem:EnrichApprx}, trace inequality  for $H^1(\Omega)$ functions \cite[Section 1.6]{BScott2008FEM} and the  $H^{2}$-regularity of $p$ implies the strong convergence of $E_hI_h(p)$ to $p$ in $H^1(\Omega)$. 
The uniqueness of limit implies $z=p$ and hence
$E_hI_h(p)$ converges weakly to $p$ in $Z.$\\  
For any $\eta\in Z$, 
\begin{align*}
a( \eta,E_hI_h(p))=(\eta,E_hI_h(p))_{X}-(\eta,E_hI_h(p))_{H^1(\Omega)}.
\end{align*} 
Since, $E_hI_h(p)$ converges weakly to $p$ in $Z$ and $H^1(\Omega)$, therefore $(\eta,E_hI_h(p))_{X}$ converges to $(\eta,p)_{X}$ and $(\eta,E_hI_h(p))+(\nabla\eta,\nabla E_hI_h(p))$ converges to $(\eta,p)+(\nabla\eta,\nabla p)$. Thus,
\begin{align*}
a( \eta,E_hI_h(p))\rightarrow a( \eta, p). 
 \end{align*}
Since $\|E_hI_h(p)\|_{H^2(\Omega)}\leq C\|p\|_{H^{2}(\Omega)}$, the subsequence considered in the previous case ($i.\;e.$ for the space $X$ which was still denoted by $\{E_hI_h(p)\}$) must have a weakly convergent subsequence denoted by $\{E_hI_h(p)\}$ (again for notational convenience!) converges weakly to some $\tilde{w}\in H^{2}(\Omega)$. The compact embedding of $H^{2}(\Omega)$ in $H^{1}(\Omega)$ implies the strong convergence of $E_hI_h(p)$ to $\tilde{w}$ in $H^1(\Omega)$ and hence by the uniqueness of the limit, $\tilde{w}=p$. Therefore,
\begin{align*} 
\int_{\Omega}D^{2}\eta:D^{2}E_hI_h(p)\;dx\rightarrow  \int_{\Omega}D^2\eta:D^2p\;dx ~\text{as}~ h\rightarrow 0.  
\end{align*}
Next, we aim to show that for $p, r\in Q,~ a(p,r)=\int_{\Omega} D^2p: D^2r\;dx$. Density of $C^{\infty}(\bar{\Omega})$ in $H^{2}(\Omega)$ yields the existence of a sequence $\{\phi_m\}\subseteq C^{\infty}(\bar{\Omega})$ with $\phi_{m}$ converges to $r$ in $H^{2}(\Omega)$. Using Green's formula, we arrive at 
\begin{eqnarray}\label{ravs}
\int_{\Omega}D^{2}\phi_{m}:D^2E_hI_h(p)\,dx-\int_{\Omega}\Delta\phi_m\,\Delta E_hI_h(p)\;dx= -\sum_{k=1}^{l}\int_{\Gamma_k}\frac{\partial^2\phi_m}{\partial n\partial t}\frac{\partial E_hI_h(p)}{\partial t}\;ds.\label{equation111}
\end{eqnarray} 
Applying integration by parts to the right hand side of \eqref{ravs} and then taking limit on both sides with respect to $m$, we find that
\begin{equation*} 
\int_{\Omega}D^{2}r:D^{2}E_hI_h(p)\;dx-a(r, E_hI_h(p))=\sum_{k=1}^{l}\int_{\Gamma_k}\frac{\partial r}{\partial n}\frac{\partial^2 E_hI_h(p)}{\partial t^2} \;ds=0.
\end{equation*}    
Since $r\in Q$, we conclude that $a(r,p)=\int_{\Omega}D^2 r:D^2 p\;dx.$ 
\end{proof}
\begin{remark}
	 Note that the above lemma helps us to show that the cost functional considered in \eqref{model_problem} equals  $\dfrac{1}{2}\|v-u_{d}\|^{2}+\dfrac{\alpha}{2}\|\Delta p\|^{2}$ over $Q$. Therefore, if we consider the optimal control problem:
	\begin{eqnarray*}
	\min_{p\in Q}\;J_{1}(v,p)=\dfrac{1}{2}\|v-u_{d}\|^{2}+\dfrac{\alpha}{2}\|\Delta p\|^{2}
	\end{eqnarray*}
 subject to 
\begin{equation*}
\begin{split}
\Delta^2v &=f \ \ \text{in}\ \  \Omega, \\
\hspace{4mm}   v &=p \ \ \text{on} \ \ \partial \Omega,\\
\partial v/\partial n&=0\ \ \text{on} \ \ \partial \Omega,
\end{split}
\end{equation*}
then Lemma \ref{lemma:importance} proves the equivalence of this control problem with \eqref{model_problem}-\eqref{b2}.
\end{remark}
It is easy to check that the bilinear form $a(\cdot,\cdot)$ defined in (\ref{bilinear_form}) is elliptic on $V (=H^{2}_{0}(\Omega))$ and bounded on $Q\times Q$ (see \cite{BScott2008FEM}).\\
The subsequent results of this section are not new and can be found in \cite{CHOWDHURYDGcontrol2015} but are briefly outlined for the sake of completeness and ease of reading.
\par
\noindent For  given $f\in L_{2}(\Omega)$ and $p\in
Q$, an application of Lax-Milgram lemma \cite{Ciarlet1978FEM,BScott2008FEM} gives the existence of an unique $v_{f}\in V$ such that
\begin{equation*}
a(v_{f},v)=(f,v)-a(p,v)\;\ \ \forall \ v\in V.
\end{equation*}
\par
\noindent Therefore, $v=v_{f}+p$ is the weak solution of the
following Dirichlet problem:
\begin{eqnarray*}
&\Delta^{2}v=f\;\;\;\;\text{in} \;\;\;\;\;\;\;\;\Omega\notag\\&v=p,\;\;\frac{\partial v}{\partial n}=0\;\;\;\; \text{on} \;\;\;\;\partial\Omega.
\end{eqnarray*}
In connection to the above discussion and following the discussions in \cite{CHOWDHURYDGcontrol2015,10cgnm}, the optimal control problem described in \eqref{model_problem}-\eqref{b2} can be rewritten as
\begin{align}\label{mocp}
\min_{p\in Q} j(p):=\frac{1}{2}\|v_f+p-u_{d}\|^{2}+\frac{\alpha}{2}|p|_{H^{2}(\Omega)}^{2}.
\end{align}
The following Theorem provides the existence and uniqueness of the solution to the optimal control problem and the corresponding optimality system.
\begin{theorem}\label{ctsoptim}
\; The problem \eqref{mocp} has a unique solution $(q,u)\in Q \times Q$. Moreover there is an additional variable known as adjoint state $\phi  \in V$ associated to the unique solution and the
  \;triplet  $(q,u,\phi)\in Q\times Q\times V$ that is $(optimal\;control,\;optimal\;state,\;adjoint\;state)$ satisfies
the following system, known as the
optimality or Karush Kuhn Tucker (KKT) system:
\begin{align}
u&=u_{f}+q,\;\;\;\;u_{f}\in V, \notag\\
a(u_{f},v)&=(f,v)-a(q,v)\ \ \;\forall \ v\in V,\label{2:3}\\
a(\phi,v)&=(u-u_{d},v)\ \ \;\forall \ v\in V,\label{2:4}\\
\alpha a(q,p)&=a(p,\phi)-(u-u_{d},p)\ \ \;\forall \ p\in
Q.\label{2:5}
\end{align}
\end{theorem}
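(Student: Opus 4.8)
The plan is to prove Theorem \ref{ctsoptim} in two stages: first establish existence and uniqueness of a minimizer of the reduced functional $j$, then derive the KKT system by a standard first-order optimality (Lagrangian) argument. For the first stage, I would observe that $j:Q\to\mathbb{R}$ is a strictly convex, continuous, coercive functional on the Hilbert space $Q$. Continuity and convexity follow because $p\mapsto v_f+p$ is affine and continuous from $Q$ into $L_2(\Omega)$ (via the Lax--Milgram solution operator already constructed above) and because $|\cdot|_{H^2(\Omega)}^2$ is a continuous quadratic form; strict convexity uses $\alpha>0$ together with the fact that $|\cdot|_{H^2(\Omega)}$ is actually a norm on $Q$ equivalent to $\|\cdot\|_{H^2(\Omega)}$ — this is where one invokes that $Q$ incorporates the constraint $\partial p/\partial n=0$, so a Poincaré-type / Friedrichs-type inequality gives $\|p\|_{H^2(\Omega)}\le C|p|_{H^2(\Omega)}$ for $p\in Q$ (equivalently, the only affine function with vanishing normal derivative on $\partial\Omega$ is the zero function, up to constants handled by the $L_2$ term). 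Coercivity of $j$ then follows since $j(p)\ge \frac{\alpha}{2}|p|_{H^2(\Omega)}^2\ge c\|p\|_{H^2(\Omega)}^2 - (\text{lower order})$. The direct method of the calculus of variations (take a minimizing sequence, extract a weakly convergent subsequence by Banach--Alaoglu, use weak lower semicontinuity of the convex continuous functional) yields a minimizer, and strict convexity gives uniqueness. Call it $u=q$ (state equals control-induced state; I would write $u=u_f+q$).

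For the second stage, I would compute the Gâteaux derivative of $j$ at the minimizer $q$ in an arbitrary direction $p\in Q$ and set it to zero. Writing $u_f'(p)$ for the solution of $a(u_f'(p),v)=-a(p,v)$ for all $v\in V$ (the linearization of the control-to-state map, which here is just $u_f'(p)$ since the map is already affine), one gets
\begin{align*}
j'(q)(p) = (u-u_d,\, u_f'(p)+p) + \alpha\, a(q,p) = 0 \qquad \forall\, p\in Q.
\end{align*}
To eliminate $u_f'(p)$ I would introduce the adjoint state $\phi\in V$ as the solution of $a(\phi,v)=(u-u_d,v)$ for all $v\in V$ (well-posed by Lax--Milgram since $a(\cdot,\cdot)$ is elliptic on $V=H^2_0(\Omega)$ and $u-u_d\in L_2(\Omega)$). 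Testing this adjoint equation with $v=u_f'(p)\in V$ and the linearized state equation with $v=\phi\in V$ gives $(u-u_d,u_f'(p)) = a(\phi,u_f'(p)) = -a(p,\phi)$. Substituting back into the optimality condition yields
\begin{align*}
\alpha\, a(q,p) = a(p,\phi) - (u-u_d,p) \qquad \forall\, p\in Q,
\end{align*}
which is \eqref{2:5}; equations \eqref{2:3} and \eqref{2:4} are simply the state and adjoint definitions. This completes the derivation.

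The main obstacle I anticipate is the coercivity/strict-convexity step, i.e.\ verifying that $|\cdot|_{H^2(\Omega)}$ controls the full $H^2$ norm on $Q$. On a general domain the seminorm $|\cdot|_{H^2(\Omega)}$ has a nontrivial kernel (the affine functions), so one cannot conclude coercivity from the regularization term alone; the argument must combine the $\frac{\alpha}{2}|p|_{H^2(\Omega)}^2$ term with the tracking term $\frac12\|v_f+p-u_d\|^2$, using that an affine function $p$ with $\partial p/\partial n=0$ on all of $\partial\Omega$ must be constant, and a constant nonzero control produces a nonzero contribution to the state, so the sum is coercive on $Q$. A clean way to package this is a compactness (Peetre--Tartar) argument: if $j$ were not coercive there is a sequence $p_k$ with $\|p_k\|_{H^2}\to\infty$ and $j(p_k)$ bounded; normalize, extract weak limits, and derive a contradiction from the fact that the weak limit would have to be a nonzero constant with zero state-tracking cost, which is impossible. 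Everything else — continuity of the solution operators, weak lower semicontinuity, the adjoint manipulations — is routine and I would only sketch it. Since the theorem is attributed to \cite{CHOWDHURYDGcontrol2015}, I expect the paper to present only a brief outline along these lines.
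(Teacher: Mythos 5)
Your overall route --- direct method on the reduced functional for existence and uniqueness, then G\^ateaux differentiation plus an adjoint state to eliminate the linearized state --- is exactly the standard argument that the paper defers to \cite[Proposition 2.2]{10cgnm}, and your repaired coercivity discussion (the kernel of $|\cdot|_{H^{2}(\Omega)}$ on $Q$ consists of the constants, which are controlled by the tracking term; Peetre--Tartar compactness) is sound. The genuine gap is the step where you write $j'(q)(p)=(u-u_d,\,u_f'(p)+p)+\alpha\,a(q,p)$. The regularization term is $\tfrac{\alpha}{2}|p|^2_{H^{2}(\Omega)}=\tfrac{\alpha}{2}\int_\Omega D^2p:D^2p\,dx$, so its derivative at $q$ in the direction $p$ is $\alpha\int_\Omega D^2q:D^2p\,dx$, whereas $a(q,p)=\int_\Omega\Delta q\,\Delta p\,dx$. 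To obtain \eqref{2:5} in the stated form you must know that these two bilinear forms coincide on $Q\times Q$. On $H^2_0(\Omega)$ this is a routine density computation, but on $Q$, where only the normal derivative vanishes on the boundary, it is not: it is precisely Lemma \ref{lemma:importance}, which the paper flags as a main novelty and whose proof requires the enriching-operator and weak-compactness machinery of Section 3. The paper's own one-line proof of the theorem consists essentially of invoking that lemma and then citing \cite{10cgnm}; your proposal supplies the routine part and silently assumes the non-routine one.

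A secondary point: your opening claim that $|\cdot|_{H^{2}(\Omega)}$ is a norm on $Q$ equivalent to $\|\cdot\|_{H^{2}(\Omega)}$ is false as stated, since nonzero constants belong to $Q$ and have vanishing seminorm; consequently strict convexity and coercivity cannot come from the regularization term alone. You correctly retract this in your final paragraph and replace it with the combined tracking-plus-regularization argument (a constant control $c$ leaves $v_f$ unchanged and shifts the state by $c$, so the tracking term blows up as $|c|\to\infty$); only that corrected version should appear in the write-up.
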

\begin{proof}
From Lemma \ref{lemma:importance}, it is clear that for any $w,v\in Q$, we have $a(w,v)=\int_{\Omega}D^{2}w:D^{2}v\,dx$. The rest of the proof follows from the similar arguments as in   \cite[Proposition 2.2]{10cgnm} and Lemma \ref{lemma:importance}. 
\end{proof}
In the following remark, it is shown that the minimum energy of \eqref{model_problem}-\eqref{b2} is realized with an equivalent $H^{3/2}(\partial\Omega)$- norm of the first trace of the solution of \eqref{mocp}.
\begin{remark}
Since $\Omega$ is polygonal, we know that the trace of $Q$ is surjective onto a subspace of $\prod_{i=1}^{m}H^{3/2}(\Gamma_{i})$ (see \cite{Grisvard1985Singularities}) which we refer to as $H^{3/2}(\partial\Omega)$. The $H^{3/2}(\partial\Omega)$ semi-norm of any $p\in H^{3/2}(\partial\Omega)$ can be defined by
\begin{eqnarray*}
	|p|_{H^{3/2}(\partial\Omega)} := |u_{p}|_{H^{2}(\Omega)} =
	\min_{w\in Q,w=p\;\;on\;\partial\Omega}|w|_{H^{2}(\Omega)},
\end{eqnarray*}
where $u_{p}$ is the biharmonic extension of $p\in H^{3/2}(\partial\Omega)$.

That is $u_{p}\in Q$ satisfies
\begin{align*}
&u_{p}=z+\tilde{p},
\end{align*}
such that
\begin{align*}
&\int_{\Omega}D^2z:D^2v\;dx=-\int_{\Omega}D^2\tilde{p}:D^2v\;dx\ \ \;\forall\;v\in V,
\end{align*}
where $\tilde{p}\in Q$ satisfies $\tilde{p}|_{\partial\Omega}=p$.
Hence,
\begin{align*}
\int_{\Omega}D^2u_{q}:D^2v\;dx=0\ \ \;\forall\;v\in V.
\end{align*}
 In view of Lemma \ref{lemma:importance}, we have
\begin{align*} 
a(u_{q},v)=0\ \ \;\forall\;v\in V.
\end{align*}
From \eqref{2:5}, we have
\begin{align*}
a(q,v)=0 \ \ \; \forall\;v\in V,
\end{align*}
which implies $q=u_{q}.$
Therefore, the minimum energy in the minimization problem \eqref{model_problem}-\eqref{b2} is realized with an equivalent $H^{3/2}(\partial\Omega)$ norm of the optimal control $q$.
\end{remark}
Now, we define the discrete form of the continuous optimality system.

\par
\noindent $\mathbf{Discrete\;system}.$ A $C^{0}$-IP discretization
of the continuous optimality system consists of finding $u_h\in
Q_h$, $\phi_h\in V_h$ and $q_h\in Q_h$ such that
\begin{align}
u_{h}&=u^{h}_{f}+q_{h},\quad u_f^{h}\in V_h,\notag\\
a_{h}(u_{f}^{h},v_{h})&=(f,v_{h})-a_{h}(q_{h},v_{h})\ \ \;\forall \
v_{h}\in V_{h},\label{3:8}\\
a_{h}(\phi_{h},v_{h})&=(u_{h}-u_{d},v_{h})\ \ \;\forall \ 
v_{h}\in V_{h},\label{3:9}\\
\alpha a_{h}(q_{h},p_{h})&=a_{h}(\phi_{h},p_{h})-(u_{h}-u_{d},p_{h})\ \ \;\forall\  p_h\in
Q_{h}.\label{3:10}
\end{align}
It is easy to check that if $f=u_{d}=0$ then
$u_{f}^{h}=q_{h}=\phi_{h}=0$ which implies that the discrete
system \eqref{3:8}-\eqref{3:10} is uniquely solvable.
\\For $p_{h}\in Q_{h}$,  $u^{h}_{p_{h}}\in Q_{h}$ is defined as
follows:
\begin{equation*}
u^{h}_{p_{h}}=w_{h}+p_{h},
\end{equation*}
where $w_{h}\in V_{h}$ solves the following equation
\begin{eqnarray*}
a_{h}(w_{h},v_{h})=-a_{h}(p_{h},v_{h})\ \ \;\forall\;v_{h}\in V_{h}.
\end{eqnarray*}
\section{Energy Norm Estimate}\label{error analysis:energy}
In this section, we briefly discuss the error estimates for the optimal control, state and adjoint variables $q$, $u$ and $\phi$ respectively in the
energy norm defined by \eqref{3:5}. Note that these results can be derived by the similar arguments as in \cite[Theorem 4.1, Theorem 4.2]{CHOWDHURYDGcontrol2015} which hold under minimal regularity assumptions.
\begin{theorem}\label{thm:EnergyEstimate}
The following optimal order error estimates hold for the optimal control $q$ in the energy norm:
\begin{align*}
\||q-q_{h}|\|_{h}\leq Ch^{min(\gamma,1)}\left(\|q\|_{H^{2+\gamma_1}(\Omega)}+\|\phi\|_{H^{2+\gamma_2}(\Omega)}+\|f\|\right)+\left(\sum_{T\in \mathcal{T}_{h}}h_{T}^{4}\|u-u_{d}\|^{2}_{T}\right)^{1/2}.
\end{align*}
Here $\gamma=\text{min}\{\gamma_1,\gamma_2\}$ is the minimum of the regularity index between the adjoint state $\phi$ and optimal control $q$. The constant $C$ depends only upon the shape regularity of the triangulation.
\end{theorem}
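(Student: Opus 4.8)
The plan is to follow the standard medius-type error analysis for $C^0$-interior penalty methods applied to an optimal control problem, adapting the arguments of \cite[Theorem 4.1, Theorem 4.2]{CHOWDHURYDGcontrol2015}. First I would introduce a suitable interpolant or enrichment-based approximation: let $I_h q \in Q_h$ be the Lagrange interpolant of $q$ and $I_h\phi \in V_h$ that of $\phi$, and use the standard interpolation estimates together with Lemma \ref{lem:EnrichApprx} to control the consistency error. Writing $q - q_h = (q - I_h q) + (I_h q - q_h)$, the first term is bounded directly by interpolation estimates in the energy norm $\||\cdot\||_h$, producing the $h^{\min(\gamma_1,1)}\|q\|_{H^{2+\gamma_1}(\Omega)}$ contribution; the work is in estimating $\chi_h := I_h q - q_h \in Q_h$.

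The key step is to test the discrete optimality equation \eqref{3:10} with $\chi_h$ and exploit coercivity \eqref{coercivity} of $a_h(\cdot,\cdot)$. Using $\alpha a_h(q_h,p_h) = a_h(\phi_h,p_h) - (u_h - u_d, p_h)$ and the continuous counterpart \eqref{2:5} (which holds for all $p \in Q$, in particular after enriching $\chi_h$ into $\tilde Q_h \subset Q$ via $E_h$), I would form the difference and split
\begin{align*}
\alpha a_h(\chi_h,\chi_h) &= \alpha a_h(I_h q - q, \chi_h) + \alpha\big(a_h(q,\chi_h) - a(q, E_h\chi_h)\big) + \alpha\, a(q, E_h\chi_h - \chi_h)\\
&\quad + \big(a_h(\phi_h,\chi_h) - a(\phi, E_h\chi_h)\big) + a(\phi, E_h\chi_h - \chi_h) - \big((u_h-u_d,\chi_h) - (u - u_d, E_h\chi_h)\big).
\end{align*}
Each bracketed term is then estimated: the interpolation term by Lemma \ref{lem:EnrichApprx} and standard estimates; the enriching-operator discrepancies $E_h\chi_h - \chi_h$ by Lemma \ref{lem:EnrichApprx}, which converts them into jump seminorms already controlled by $\||\chi_h\||_h$; the consistency terms like $a_h(q,\chi_h) - a(q,E_h\chi_h)$ by the usual $C^0$-IP consistency analysis (integration by parts, trace and inverse inequalities), yielding factors $h^{\min(\gamma_1,1)}\|q\|_{H^{2+\gamma_1}}$; and the adjoint-related terms similarly producing $h^{\min(\gamma_2,1)}\|\phi\|_{H^{2+\gamma_2}}$ and, crucially, the local term $\big(\sum_T h_T^4 \|u - u_d\|_T^2\big)^{1/2}$ coming from the $L_2$-pairing differences together with the superconvergence-type estimate $\|E_h\chi_h - \chi_h\|_T \le C h_T^2 |\chi_h|_{\text{jump}}$.

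Next I would need the auxiliary estimate for $\|u - u_h\|$ (equivalently $\|q - q_h\|$ and the state error), since $(u_h - u_d, \chi_h) - (u - u_d, E_h\chi_h)$ involves $u - u_h = (u_f - u_f^h) + (q - q_h)$; here one invokes the energy-norm estimate for the state equation \eqref{3:8} versus \eqref{2:3}, which is a by-now-standard $C^0$-IP a priori bound in terms of $\|f\|$ and $\||q - q_h\||_h$, and absorbs the resulting $\||q-q_h\||_h$ term into the left-hand side for $h$ small (or folds it into the constant). Collecting all bounds, using $\alpha > 0$ fixed, the triangle inequality $\||q - q_h\||_h \le \||q - I_h q\||_h + \||\chi_h\||_h$, and $f$-dependence entering through the state solves, gives the claimed estimate with $\gamma = \min\{\gamma_1,\gamma_2\}$.

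The main obstacle I anticipate is the careful bookkeeping of the consistency errors together with the enriching operator: one must pass between the broken bilinear form $a_h(\cdot,\cdot)$ on $Q_h$ and the continuous form $a(\cdot,\cdot)$ on $Q$ (where Lemma \ref{lemma:importance} guarantees $a(p,r) = \int_\Omega D^2 p : D^2 r\,dx$, so that the Hessian-based integration by parts used in the enriched space is legitimate), and one must ensure that every jump term produced is either absorbed into the coercivity on the left or matched against an $h_T^2$ (or $h_T^4$ in $L_2$) local factor so that the final bound is genuinely of optimal order $h^{\min(\gamma,1)}$ rather than losing a power. Managing the interplay between the reduced regularity indices $\gamma_1, \gamma_2$ (which may be less than $1$ on a general convex polygon) and the cap at $1$ from the $P_2$ elements is the delicate point, but it is handled exactly as in the interior-control analysis of \cite{Gudi2014Control} and the Dirichlet analysis of \cite{CHOWDHURYDGcontrol2015}.
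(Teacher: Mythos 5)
Your outline is correct and follows essentially the same route the paper takes: the paper gives no independent proof of Theorem \ref{thm:EnergyEstimate} but defers to the arguments of \cite[Theorems 4.1--4.2]{CHOWDHURYDGcontrol2015}, which are precisely the interpolation-plus-discrete-error splitting, coercivity of $a_h$, enriching-operator consistency analysis, and state-equation auxiliary bound that you describe. Your identification of the source of the $\bigl(\sum_{T}h_T^4\|u-u_d\|_T^2\bigr)^{1/2}$ term (the $L_2$-pairing mismatch combined with the $h_T^{-4}$-weighted estimate of Lemma \ref{lem:EnrichApprx}) matches the intended argument.
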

Optimal order error estimates for the optimal state and adjoint state are stated in the following theorem. 

\begin{theorem}\label{theorem:State_andAdjointState}
The optimal state $u$ and adjoint state $\phi$, satisfies the following error estimate in energy norm:
\begin{align*}
&\||u-u_{h}\||_{h}\leq Ch^{ min(\gamma,1)}\left(\|f|\|+\|q\|_{H^{2+\gamma_1}(\Omega)}+\|\phi\|_{H^{2+\gamma_2}(\Omega)}\right),\\
&|\|\phi-\phi_{h}|\|_{h}\leq Ch^{min(\gamma,1)}\left(\|f\|+\|q\|_{H^{2+\gamma_1}(\Omega)}+\|\phi\|_{H^{2+\gamma_2}(\Omega)}\right),
\end{align*}
where $\gamma_1$, $\gamma_2$ and $\gamma$ are same as in \text{Theorem {\ref{thm:EnergyEstimate}}} .
\end{theorem}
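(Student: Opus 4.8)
The plan is to derive the energy norm estimates for the state $u$ and adjoint state $\phi$ from the already-established control estimate in Theorem \ref{thm:EnergyEstimate}, together with standard $C^0$-IP approximation and consistency results. First I would introduce the intermediate (auxiliary) discrete solutions: let $\hat u_h \in Q_h$ solve the discrete state equation with the \emph{continuous} optimal control $q$ as data, i.e. $\hat u_h = \hat w_h + q$ with $a_h(\hat w_h, v_h) = (f,v_h) - a_h(q,v_h)$ for all $v_h \in V_h$, and similarly let $\hat\phi_h \in V_h$ solve $a_h(\hat\phi_h, v_h) = (u - u_d, v_h)$ for all $v_h \in V_h$, using the continuous optimal state $u$. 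Then I would split
\begin{align*}
\||u - u_h\||_h \le \||u - \hat u_h\||_h + \||\hat u_h - u_h\||_h,
\end{align*}
and likewise for $\phi$. The first term is a pure discretization error for a biharmonic-type problem with fixed data, controlled by the standard $C^0$-IP a priori analysis (see \cite{BSung2005DG4}, \cite{CHOWDHURYDGcontrol2015}), giving a bound of order $h^{\min(\gamma_1,1)}$ in terms of $\|q\|_{H^{2+\gamma_1}(\Omega)}$ and $\|f\|$ for the state, and order $h^{\min(\gamma_2,1)}$ in terms of $\|\phi\|_{H^{2+\gamma_2}(\Omega)}$ (plus a term in $\|u - u_d\|$, absorbed into the data norms via the a priori bound $\|u\|_{H^{2+\gamma_2}} \lesssim \|\phi\|_{H^{2+\gamma_2}}$ by elliptic regularity and \eqref{2:4}) for the adjoint.

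Next I would estimate the second term $\||\hat u_h - u_h\||_h$. Subtracting \eqref{3:8} from the defining equation for $\hat w_h$, and noting $\hat u_h - u_h = (\hat w_h - u_f^h) + (q - q_h)$ with $\hat w_h - u_f^h \in V_h$, one gets $a_h(\hat w_h - u_f^h, v_h) = -a_h(q - q_h, v_h)$ for all $v_h \in V_h$; testing with $v_h = \hat w_h - u_f^h$ and using coercivity \eqref{coercivity} and continuity \eqref{continuity} gives $\|\hat w_h - u_f^h\|_h \le C\|q - q_h\|_h \le C\||q - q_h\||_h$, so $\||\hat u_h - u_h\||_h \le C\||q - q_h\||_h$. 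For the adjoint, subtracting \eqref{3:9} from the equation for $\hat\phi_h$ yields $a_h(\hat\phi_h - \phi_h, v_h) = (u - u_h, v_h)$; testing with $v_h = \hat\phi_h - \phi_h$, using coercivity, Cauchy--Schwarz and the discrete norm equivalences, one obtains $\||\hat\phi_h - \phi_h\||_h \le C\big(\sum_{T} h_T^4 \|u - u_h\|_T^2\big)^{1/2} \le C\, h^2 \||u - u_h\||_h$ (using an inverse-type / Poincaré bound, or more simply $\le C \|u - u_h\|$, which is controlled by the state estimate just proved). Feeding in Theorem \ref{thm:EnergyEstimate} for $\||q - q_h\||_h$ and collecting powers of $h$ — recalling $\gamma = \min(\gamma_1,\gamma_2)$ — gives both asserted bounds; the lower-order data term $\big(\sum_T h_T^4\|u - u_d\|_T^2\big)^{1/2}$ appearing in Theorem \ref{thm:EnergyEstimate} is of order $h^2$ and hence absorbed into $Ch^{\min(\gamma,1)}(\cdots)$ after bounding $\|u - u_d\| \le \|u\| + \|u_d\| \lesssim \|\phi\|_{H^{2+\gamma_2}} + \|f\|$ via the optimality system.

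The main obstacle I expect is bookkeeping the consistency/nonconformity errors correctly: the auxiliary solutions $\hat u_h, \hat\phi_h$ are defined with the discrete bilinear form $a_h$ but the continuous data $q, u$, so the ``first term'' $\||u - \hat u_h\||_h$ is not handled by Céa's lemma directly — it requires the medius/consistency analysis of the $C^0$-IP method for the biharmonic problem (bounding $a_h(u, v_h) - (f, v_h)$ for $v_h \in V_h$ via the enriching operator $E_h$ of Lemma \ref{lem:EnrichApprx} and the jump terms), exactly as in \cite[Theorem 4.1]{CHOWDHURYDGcontrol2015}. Since the statement explicitly says these estimates follow ``by the similar arguments as in \cite[Theorem 4.1, Theorem 4.2]{CHOWDHURYDGcontrol2015},'' I would keep this part brief, citing that analysis, and devote the written proof mainly to the coupling estimates $\||\hat u_h - u_h\||_h \lesssim \||q - q_h\||_h$ and $\||\hat\phi_h - \phi_h\||_h \lesssim \|u - u_h\|$, which are the genuinely new gluing steps, then conclude by invoking Theorem \ref{thm:EnergyEstimate}.
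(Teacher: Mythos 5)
Your proposal is correct and follows essentially the same route the paper intends: the paper offers no proof of its own here, deferring entirely to the arguments of \cite{CHOWDHURYDGcontrol2015}, and those arguments are precisely your splitting into a fixed-data $C^0$-IP discretization error (handled by the medius/consistency analysis) plus coupling terms controlled by $\||q-q_{h}\||_{h}$ for the state and by $\|u-u_{h}\|$ for the adjoint, followed by an appeal to Theorem \ref{thm:EnergyEstimate}. One minor point to tighten: the continuity bound \eqref{continuity} is stated only for arguments in $Q_{h}$, so when estimating $a_{h}(q-q_{h},\hat{w}_{h}-u_{f}^{h})$ you should first insert the interpolant $I_{h}q$ (or control the first slot in the $\|\cdot\|_{Q_{h}}$ seminorm, which carries the edge averages of $\Delta(q-I_hq)$); this is routine and does not affect the conclusion.
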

\section{The $L_{2}$-Norm Estimate}\label{error analysis:L^{2}}
This section is devoted to the $L_{2}$-norm error estimate
for the optimal control. In this section, we assume the domain to be convex unless mentioned otherwise. Note that even with this restriction on the domain, the optimal control $q$ can be quite rough but it helps the adjoint state $\phi$ to gain $H^{3}$-regularity \cite{BSung2005DG4}, which plays an essential role to derive the desired error estimate.  

We begin by taking test functions from $\mathcal{D}(\Omega)$ in (\ref{2:4})
 to obtain
\begin{align*}
\Delta^2\phi=u-u_d\;\text{ in } \;\Omega,
\end{align*}
in the sense of distributions. Further, density of $\mathcal{D}(\Omega)
$ in $L_{2}(\Omega)$ yields
\begin{align}\label{adjoint}
\Delta^2\phi=u-u_d \ \;\text{a.\;e.\; in } \;\Omega,
\end{align}
and hence, \eqref{adjoint} along with the convexity of domain implies $\nabla(\Delta\phi)\in H(div,\Omega)$.\\
Next, the density of $C^{\infty}(\bar{\Omega})\times C^{\infty}(\bar{\Omega})$ in $H(div,\Omega)$ space \cite{42t} enables us to write
\begin{eqnarray}
\int_{\Omega}\nabla(\Delta)\phi\cdot\nabla\psi\;dx+\int_{\Omega}\Delta^{2}\phi\ \psi\;dx=\Big\langle
\frac{\partial\Delta\phi}{\partial
	n},\psi\Big\rangle_{-\frac{1}{2},\frac{1}{2},\Omega}\ \ \;\forall\,
\psi\in H^{1}(\Omega).\label{5:1}
\end{eqnarray}
On combining \eqref{adjoint} and \eqref{5:1}, we obtain
\begin{align}\label{eq:5:2}
\int_{\Omega}\nabla(\Delta)\phi\cdot\nabla\psi\;dx+\int_{\Omega}(u-u_d)\psi\;dx=\Big\langle
\frac{\partial\Delta\phi}{\partial
n},\psi\Big\rangle_{-\frac{1}{2},\frac{1}{2},\Omega}\ \ \;\forall \ 
\psi\in H^{1}(\Omega).
\end{align}
Additionally, if $\psi\in Q$ in \eqref{eq:5:2}, we find that
\begin{eqnarray}
a(\phi,\psi)-\int_{\Omega}(u-u_{d})\psi\;dx=-\Big\langle \frac{\partial\Delta\phi}{\partial n},\psi\Big\rangle_{-\frac{1}{2},\frac{1}{2},\Omega}.\label{5:2}
\end{eqnarray}
We use the following auxiliary result for the subsequent error analysis.
\begin{lemma}\label{lemma:solution}
For the following variational problem of finding $w\in H^{1}(\Omega)\; such\; that$
\begin{align*}
(\nabla w,\nabla p)=-\frac{1}{\alpha}\Big\langle \frac{\partial(\Delta\phi)}{\partial n} ,p\Big\rangle\ \ \;\forall\  p\in H^{1}(\Omega),
\end{align*}
there exist a solution $w\in H^1(\Omega)$  unique upto an additive constant. 
\end{lemma}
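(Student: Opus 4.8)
The plan is to recognize this as a standard Neumann-type variational problem and apply the Lax--Milgram lemma on the appropriate quotient space. The bilinear form $b(w,p) := (\nabla w, \nabla p)$ is not coercive on all of $H^1(\Omega)$ because constants are in its kernel, so the natural setting is the quotient space $H^1(\Omega)/\mathbb{R}$, or equivalently the subspace $\widetilde H^1(\Omega) = \{v \in H^1(\Omega) : \int_\Omega v\,dx = 0\}$. First I would check that the right-hand side functional $F(p) := -\tfrac{1}{\alpha}\langle \partial(\Delta\phi)/\partial n, p\rangle$ is a well-defined bounded linear functional on $H^1(\Omega)$: since the convexity of $\Omega$ and \eqref{adjoint} give $\nabla(\Delta\phi) \in H(\mathrm{div},\Omega)$, the normal trace $\partial(\Delta\phi)/\partial n$ lives in $H^{-1/2}(\partial\Omega)$, and the trace operator $H^1(\Omega) \to H^{1/2}(\partial\Omega)$ is bounded, so $F$ is bounded on $H^1(\Omega)$.

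Next I would verify the compatibility (solvability) condition. For the problem to make sense on the quotient space, $F$ must annihilate constants, i.e. $\langle \partial(\Delta\phi)/\partial n, 1\rangle = 0$. This follows by taking $\psi \equiv 1$ in \eqref{5:1} (or \eqref{eq:5:2}): the term $\int_\Omega \nabla(\Delta\phi)\cdot\nabla 1\,dx$ vanishes, and $\int_\Omega \Delta^2\phi\,dx = \int_\Omega (u-u_d)\,dx$ need not vanish in general --- so actually one gets $\langle \partial(\Delta\phi)/\partial n, 1\rangle = \int_\Omega(u-u_d)\,dx$, which is generally nonzero. Hence I must be careful: the compatibility condition for the stated problem is $\int_\Omega(u-u_d)\,dx = 0$? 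That cannot be assumed. Let me reconsider --- the correct reading is that the equation is only required to hold for $p \in H^1(\Omega)$ and we seek $w$ unique up to a constant; but then testing with $p=1$ forces $0 = -\tfrac1\alpha\langle\partial(\Delta\phi)/\partial n,1\rangle$, which is a genuine constraint. The resolution the paper likely intends: the pairing $\langle\cdot,\cdot\rangle$ here is not the full $H^{-1/2}$--$H^{1/2}$ duality over $\partial\Omega$ but rather is interpreted so that the compatibility holds, or the problem is implicitly posed with the understanding that only the gradient of $w$ is determined and the functional, when restricted to $\widetilde H^1(\Omega)$, is what matters. I would therefore state the problem on $\widetilde H^1(\Omega)$: find $w \in \widetilde H^1(\Omega)$ with $(\nabla w,\nabla p) = F(p)$ for all $p \in \widetilde H^1(\Omega)$.

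On $\widetilde H^1(\Omega)$, coercivity of $b(\cdot,\cdot)$ is exactly the Poincaré--Wirtinger inequality: $\|v\|_{L^2(\Omega)} \le C\|\nabla v\|_{L^2(\Omega)}$ for $v$ with zero mean, whence $b(v,v) = \|\nabla v\|^2 \ge c\|v\|_{H^1(\Omega)}^2$. Boundedness of $b$ is the Cauchy--Schwarz inequality. Boundedness of $F$ restricted to $\widetilde H^1(\Omega)$ is inherited from the boundedness on $H^1(\Omega)$ established above. Then Lax--Milgram yields a unique $w \in \widetilde H^1(\Omega)$, and adding back an arbitrary constant gives the full solution set, which is precisely "unique up to an additive constant."

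The main obstacle is the compatibility/well-posedness subtlety around testing with constants: one must either justify that the duality pairing $\langle \partial(\Delta\phi)/\partial n, \cdot\rangle$ is being used in a sense where constants are handled consistently (e.g. the functional is only ever evaluated against mean-zero test functions, or equivalently on $H^1(\Omega)/\mathbb{R}$), or reconcile it with \eqref{5:1}. I would make this precise by reformulating on the quotient space and noting that the original statement's phrase "unique up to an additive constant" signals exactly this quotient-space formulation; the rest (Poincaré--Wirtinger coercivity, Cauchy--Schwarz boundedness, Lax--Milgram) is routine.
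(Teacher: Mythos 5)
Your overall strategy --- pass to the mean-zero subspace (equivalently $H^{1}(\Omega)/\mathbb{R}$), get coercivity from Poincar\'e--Wirtinger, and invoke Lax--Milgram --- is exactly the route the paper takes. However, there is a genuine gap in how you handle the compatibility condition, and it matters because without it the lemma as stated would be false. You correctly compute from \eqref{eq:5:2} with $\psi\equiv 1$ that $\langle \partial(\Delta\phi)/\partial n,1\rangle_{-\frac12,\frac12,\partial\Omega}=\int_{\Omega}(u-u_d)\,dx$, but you then assert this is ``generally nonzero'' and ``cannot be assumed,'' and you escape by weakening the problem to test functions in $\widetilde H^{1}(\Omega)=\{v\in H^{1}(\Omega):\int_{\Omega}v\,dx=0\}$. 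That weakened problem is not the one in the lemma: the lemma demands $(\nabla w,\nabla p)=F(p)$ for \emph{all} $p\in H^{1}(\Omega)$, and since the left side vanishes at $p\equiv 1$, a solution exists only if $F(1)=0$. Your argument therefore either proves a different statement or, if your claim that $\int_{\Omega}(u-u_d)\,dx\neq 0$ in general were right, would show the lemma is wrong.

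The missing observation is that the compatibility condition is a consequence of the optimality system. Constant functions belong to $Q$ (their normal derivative vanishes on $\partial\Omega$), so taking $p\equiv 1$ in \eqref{2:5} and using $\Delta 1=0$ gives $0=\alpha a(q,1)=a(1,\phi)-(u-u_d,1)=-(u-u_d,1)$, i.e.\ $\int_{\Omega}(u-u_d)\,dx=0$. Combined with your own computation this yields $\langle \partial(\Delta\phi)/\partial n,1\rangle_{-\frac12,\frac12,\partial\Omega}=0$, so $F$ annihilates constants; then the $w$ produced by Lax--Milgram on the mean-zero subspace satisfies the equation for every $p\in H^{1}(\Omega)$ (split $p$ into its mean and its mean-zero part), and uniqueness up to an additive constant follows since the homogeneous problem forces $\nabla w=0$. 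The paper itself performs exactly this check for the analogous dual problem later, in the proof of Theorem \ref{thm:L2estimate} (taking $p=1$ in \eqref{5:8}); the same one-line verification is what your proof needs here.
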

\begin{proof}
The proof follows from the fact that the $H^1(\Omega)$ semi-norm defines a norm on the quotient space $H^{1}(\Omega)/\mathbb{R}$.
\end{proof}

The following lemma provides one of the major difference of this article from \cite{CHOWDHURYDGcontrol2015}. The corresponding lemma in \cite[Lemma 5.2]{CHOWDHURYDGcontrol2015} assumes that the interior angles of the domain should not exceed $120$ degrees but in view of Lemma \ref{lemma:importance}, we are now able to prove the following lemma with a more relaxed interior angle condition (all the interior angles are less than $180$ degrees).
It also helps to establish a more direct relation between the optimal control and adjoint state.
\begin{lemma}\label{lemma:regularity}
The optimal control $q$ satisfies $\Delta q \in H^{1}(\Omega)$ and $\nabla(\Delta q)\in H(div,\Omega)$.
\end{lemma}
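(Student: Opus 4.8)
The plan is to exploit the optimality condition \eqref{2:5} together with Lemma \ref{lemma:importance} to rewrite the Euler--Lagrange equation for $q$ in a form from which the desired regularity can be read off. Starting from
\[
\alpha a(q,p)=a(p,\phi)-(u-u_{d},p)\qquad\forall\,p\in Q,
\]
I would first replace the right-hand side using \eqref{5:2}, which was derived precisely by integrating by parts the relation $\Delta^{2}\phi=u-u_{d}$ and using $\nabla(\Delta\phi)\in H(\mathrm{div},\Omega)$ (this membership coming from the convexity of $\Omega$ and hence $\phi\in H^{3}(\Omega)$). This yields
\[
\alpha a(q,p)=-\Big\langle \tfrac{\partial\Delta\phi}{\partial n},p\Big\rangle_{-\frac12,\frac12,\partial\Omega}\qquad\forall\,p\in Q.
\]
Next I would use Lemma \ref{lemma:importance} to convert $a(q,p)$ into the Hessian form $\int_\Omega D^2q:D^2p\,dx$, and then integrate by parts (as in the computation \eqref{ravs} inside the proof of Lemma \ref{lemma:importance}, now run in reverse) to bring it back to $\int_\Omega \Delta q\,\Delta p\,dx$ with the boundary terms controlled by $\partial p/\partial n=0$. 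The upshot is an identity of the shape
\[
\alpha\int_{\Omega}\Delta q\,\Delta p\,dx = -\Big\langle \tfrac{\partial\Delta\phi}{\partial n},p\Big\rangle\qquad\forall\,p\in Q,
\]
and in particular, restricting to $p\in\mathcal{D}(\Omega)\subset H^2_0(\Omega)\subset Q$, the boundary pairing drops out and $\Delta^{2}q=0$ in the sense of distributions, so $\Delta q$ is harmonic; combined with $q\in H^{2}(\Omega)$ this already gives a good handle on $\Delta q$.

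To get $\Delta q\in H^{1}(\Omega)$ I would then bring in Lemma \ref{lemma:solution}: let $w\in H^{1}(\Omega)$ (unique up to a constant) solve $(\nabla w,\nabla p)=-\frac1\alpha\langle \partial(\Delta\phi)/\partial n,p\rangle$ for all $p\in H^{1}(\Omega)$. Comparing this with the identity above, I want to identify $\Delta q$ with $w$ (up to a constant): testing the $q$-identity against $p\in Q$ and the $w$-identity against the same $p$, and using that $a(q,p)=(\nabla(\Delta q),\nabla p)+\text{boundary}$ whenever $\Delta q$ is regular enough — or, more carefully, arguing the other direction, that $w$ serves as the weak Laplacian of $q$. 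Concretely, since $\Delta^2 q=0$ distributionally and $\Omega$ is convex, elliptic regularity for the biharmonic-type problem with the Cahn--Hilliard-type boundary data forces $q\in H^{3}(\Omega)$ locally, hence $\Delta q\in H^{1}(\Omega)$; then the identity $\alpha(\nabla(\Delta q),\nabla p)=-\langle\partial(\Delta\phi)/\partial n,p\rangle$ for all $p\in H^1(\Omega)$ shows that $\nabla(\Delta q)$ has a distributional divergence in $L_2(\Omega)$ (equal to $0$, in fact, by taking $p\in\mathcal D(\Omega)$), which is exactly the statement $\nabla(\Delta q)\in H(\mathrm{div},\Omega)$.

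The main obstacle I anticipate is the rigorous justification of the integration-by-parts step converting $a(q,p)$ between its two equivalent forms while keeping precise track of the boundary integrals: $q$ is only known a priori to be in $H^{2}(\Omega)\cap Q$, so $\partial^2 q/\partial n\partial t$ and $\Delta q|_{\partial\Omega}$ are not classically defined, and one must either work with the smooth approximations $E_hI_h(q)$ (as in Lemma \ref{lemma:importance}) and pass to the limit, or invoke a density/trace argument on the space $X=\{\phi\in H^1(\Omega):\Delta\phi\in L_2(\Omega)\}$. The cleanest route is probably: (i) use Lemma \ref{lemma:importance} to pass $a(q,p)$ to Hessian form with no boundary terms; (ii) obtain $\Delta^{2}q=0$ distributionally by testing against $\mathcal D(\Omega)$; (iii) invoke elliptic regularity on the convex domain to upgrade $q$ to $H^{3}(\Omega)$ (so $\Delta q\in H^1(\Omega)$ is immediate), exactly as the regularity of $\phi$ was obtained; and finally (iv) read off $\nabla(\Delta q)\in H(\mathrm{div},\Omega)$ from $\Delta(\Delta q)=\Delta^2 q=0\in L_2(\Omega)$. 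I would also double-check that $u-u_d\in L_2(\Omega)$ (clear, since $u\in Q\subset L_2$ and $u_d\in L_2$) so that all the terms entering these identities are legitimate.
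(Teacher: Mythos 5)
Your setup is the same as the paper's: you correctly derive the identity $\alpha\, a(q,p)=-\langle \partial(\Delta\phi)/\partial n,p\rangle_{-\frac12,\frac12,\partial\Omega}$ for all $p\in Q$ from \eqref{2:5}, Lemma \ref{lemma:importance} and \eqref{5:2}, you correctly obtain $\Delta^{2}q=0$ in the sense of distributions, and your step (iv) (reading off $\nabla(\Delta q)\in H(\mathrm{div},\Omega)$ from $\Delta^2q=0$ once $\Delta q\in H^{1}(\Omega)$ is known) is fine. The gap is in how you get $\Delta q\in H^{1}(\Omega)$. Your ``cleanest route'' rests on step (iii): that $\Delta^{2}q=0$ plus convexity forces $q\in H^{3}(\Omega)$ via elliptic regularity for the Cahn--Hilliard-type boundary value problem. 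This is exactly the regularity the paper cannot and does not assume: on a convex polygon with an interior angle in $(120^\circ,180^\circ)$ the optimal control is in general not even in $H^{5/2+\epsilon}(\Omega)$ (this is the angle restriction of \cite{CHOWDHURYDGcontrol2015} that the present paper is written to remove), so global $H^{3}$ regularity of $q$ is false in general. It is only the adjoint state $\phi$, with its clamped (homogeneous Dirichlet) boundary conditions, that gains $H^{3}$ regularity on a convex domain. Interior regularity, which is all that $\Delta^{2}q=0$ gives you for free, yields $\Delta q\in H^{1}_{\mathrm{loc}}(\Omega)$ but says nothing about integrability of $\nabla(\Delta q)$ up to the boundary; and if $q\in H^{3}(\Omega)$ were available, the lemma (and indeed the whole point of the paper) would be vacuous.

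The argument you gesture at but abandon --- identifying $\Delta q$ with the auxiliary function $w$ of Lemma \ref{lemma:solution} --- is in fact the paper's proof, and it can be completed without any circularity. Integrating by parts in Lemma \ref{lemma:solution} for $p\in Q$ (the boundary term vanishes because $\partial p/\partial n=0$) gives
\begin{equation*}
(w,\Delta p)=-\frac{1}{\alpha}\Big\langle \frac{\partial(\Delta\phi)}{\partial n},p\Big\rangle_{-\frac12,\frac12,\partial\Omega}=a(q,p)=(\Delta q,\Delta p)\qquad\forall\,p\in Q,
\end{equation*}
so that $(w-\Delta q,\Delta p)=0$ for all $p\in Q$. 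No regularity of $\Delta q$ beyond $\Delta q\in L_{2}(\Omega)$ is needed here. The key point you are missing is that the map $p\mapsto \Delta p$ from $Q$ onto $L_{2}^{0}(\Omega)=\{\psi\in L_2(\Omega):\int_\Omega\psi=0\}$ is surjective: given $g\in L_{2}^{0}(\Omega)$, the Neumann problem $-\Delta p=-g$, $\partial p/\partial n=0$ has an $H^{2}(\Omega)$ solution on a convex polygon, hence $p\in Q$. Therefore $w-\Delta q$ is orthogonal to $L_{2}^{0}(\Omega)$, i.e.\ it is a constant, and $\Delta q=w+\text{const}\in H^{1}(\Omega)$ because $w\in H^{1}(\Omega)$ by construction. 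This uses only $H^{2}$ elliptic regularity for the second-order Neumann problem on a convex polygon --- which holds for all interior angles below $180^\circ$ --- rather than any $H^{3}$ regularity of the fourth-order problem.
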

\begin{proof}
Using Lemma \ref{lemma:importance}, \eqref{2:5} and \eqref{5:2}, we find that
\begin{align}\label{eq:5:4}
a(q,p)=-\frac{1}{\alpha}\Big\langle\frac{\partial(\Delta\phi)}{\partial
n},p\Big\rangle_{-\frac{1}{2},\frac{1}{2},\partial\Omega}\ \ \;\forall \ p\ \in Q.
\end{align}
A use of integration by parts for $p\in Q$ in Lemma \ref{lemma:solution} yields
\begin{align}\label{eq:5:5}
(w,\Delta p)=-\frac{1}{\alpha}\Big\langle\frac{\partial(\Delta\phi)}{\partial
n},p\Big\rangle_{-\frac{1}{2},\frac{1}{2},\partial\Omega}\ \ \;\forall \ p\in H^{1}(\Omega).
\end{align}

\par
\noindent
From \eqref{eq:5:4} and \eqref{eq:5:5}, we find that
\begin{align*}
(w-\Delta q,\Delta p)=0\ \ \;\forall\ p\in Q.
\end{align*}
A use of elliptic regularity theory for Poisson equation having Neumann boundary condition on polygonal convex domains along with the fact that $q\in H^{2}(\Omega)$  imply that $w-\Delta q$ belongs to the orthogonal complement of $L^0_{2}(\Omega)$, where
\begin{align*}
L^0_{2}(\Omega)=\Big\{\psi\in L_{2}(\Omega): \int_{\Omega}\psi=0\Big\}.
\end{align*}
Therefore, $\Delta q=w+a,$ where $a$ is some constant function. Hence $\Delta q\in H^{1}(\Omega).$
Taking test functions from $\mathcal{D}(\Omega)$ in \eqref{2:5} and using \eqref{adjoint} together with integration by parts, we obtain
\begin{eqnarray*}
\Delta^{2}q=0\;\;\;\text{in}\;\;\Omega,
\end{eqnarray*}
in the sense of distributions. The rest of the proof follows from the density of $\mathcal{D}(\Omega)$ in $L_{2}(\Omega)$.
\end{proof}

The following remark and lemma can be found in \cite{CHOWDHURYDGcontrol2015} but is discussed here for the sake of completeness.
\begin{remark}
Since $C^{\infty}(\bar{\Omega})\times C^{\infty}(\bar{\Omega})$ is dense in $H(div,\Omega)$, the following holds
\begin{equation*}
(\Delta^{2}q,p)+(\nabla(\Delta q),\nabla p)=\Big\langle\frac{\partial(\Delta q)}{\partial
n},p\Big\rangle_{-\frac{1}{2},\frac{1}{2},\partial\Omega},
\end{equation*}
but $\Delta^2 q=0$ in $\Omega$ gives
\begin{equation*}
(\nabla(\Delta q),\nabla p)=\Big\langle\frac{\partial(\Delta q)}{\partial
n},p\Big\rangle_{-\frac{1}{2},\frac{1}{2},\partial\Omega}.
\end{equation*}
\par
\noindent
A use of Green's identity yields
\begin{eqnarray}
a(q,p)=-\Big\langle\frac{\partial(\Delta q)}{\partial
n},p\Big\rangle_{-\frac{1}{2},\frac{1}{2},\partial\Omega}\ \ \;\forall \ p\in Q.\label{5:4}
\end{eqnarray}
Using \eqref{5:4} and \eqref{2:5} along with \eqref{5:2}, we find that
\begin{equation*}
\Big\langle\frac{\partial(\Delta \phi)}{\partial
n},p\Big\rangle_{-\frac{1}{2},\frac{1}{2},\partial\Omega}=\alpha\Big\langle\frac{\partial\Delta q}{\partial n},p\Big\rangle_{-\frac{1}{2},\frac{1}{2},\partial\Omega}\ \ \;\forall\  p\in Q.
\end{equation*}
\end{remark}
The following lemma shows that the optimal control $q\in Q$ and adjoint state $\phi \in V$ are directly related.

\begin{lemma}\label{lemma:5.3}
For $p\in H^{\frac{1}{2}}(\partial\Omega)$, we have
\begin{equation*}
\alpha\Big\langle\frac{\partial\Delta q}{\partial n},p \Big\rangle_{-\frac{1}{2},\frac{1}{2},\partial\Omega}=\Big\langle\frac{\partial\Delta \phi}{\partial n},p \Big\rangle_{-\frac{1}{2},\frac{1}{2},\partial\Omega}.
\end{equation*}
\end{lemma}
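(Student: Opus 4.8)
The plan is to extend, by density and continuity, the identity that is already available for $p\in Q$. Indeed, the remark preceding this lemma establishes
\[
\alpha\Big\langle\frac{\partial\Delta q}{\partial n},p\Big\rangle_{-\frac12,\frac12,\partial\Omega}=\Big\langle\frac{\partial\Delta\phi}{\partial n},p\Big\rangle_{-\frac12,\frac12,\partial\Omega}\qquad\forall\,p\in Q,
\]
that is, the identity holds whenever the boundary datum $p|_{\partial\Omega}$ lies in the trace space $\gamma_0(Q)=H^{3/2}(\partial\Omega)$; the lemma asks for the same identity for every $p\in H^{1/2}(\partial\Omega)$.

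First I would check that both sides are bounded linear functionals on $H^{1/2}(\partial\Omega)$. By Lemma \ref{lemma:regularity}, $\nabla(\Delta q)\in H(div,\Omega)$, so its normal trace $\partial\Delta q/\partial n$ lies in $H^{-1/2}(\partial\Omega)$, the dual of $H^{1/2}(\partial\Omega)$; and the convexity of $\Omega$ gave $\nabla(\Delta\phi)\in H(div,\Omega)$ in the discussion around \eqref{adjoint}, so likewise $\partial\Delta\phi/\partial n\in H^{-1/2}(\partial\Omega)$. Hence $p\mapsto\langle\partial\Delta q/\partial n,p\rangle_{-\frac12,\frac12,\partial\Omega}$ and $p\mapsto\langle\partial\Delta\phi/\partial n,p\rangle_{-\frac12,\frac12,\partial\Omega}$ are continuous on $H^{1/2}(\partial\Omega)$.

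Next I would show that $\gamma_0(Q)$ is dense in $H^{1/2}(\partial\Omega)$. It is enough to observe that $\gamma_0(Q)$ contains every boundary function $g$ that is smooth on each side $\Gamma_k$ and vanishes, together with all its derivatives, in a neighborhood of each vertex: such a $g$ admits the lift $p=\sum_k g|_{\Gamma_k}(s)\,\psi(t)$, written in local tangential--normal coordinates $(s,t)$ on a thin tubular neighborhood of the interior of $\Gamma_k$, where $\psi\in C_c^\infty([0,\infty))$ with $\psi(0)=1$ and $\psi'(0)=0$. This $p$ is smooth up to $\bar\Omega$, is supported away from the vertices, and satisfies $p|_{\partial\Omega}=g$ and $\partial p/\partial n=0$, hence $p\in Q$. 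Functions of this type are dense in $H^{1/2}(\partial\Omega)$; this uses that $C_c^\infty$ is dense in $H^{s}$ of an interval for $s\le \frac12$, so that truncating near the vertices is harmless at regularity $\frac12$ (see \cite{Grisvard1985Singularities}).

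Finally, given $p\in H^{1/2}(\partial\Omega)$, I would pick $p_n\in\gamma_0(Q)$ with $p_n\to p$ in $H^{1/2}(\partial\Omega)$, apply the displayed identity to each $p_n$, and pass to the limit using the continuity of both sides established above. The main obstacle is the density step: one must simultaneously accommodate the polygonal corners and the constraint $\partial p/\partial n=0$ when constructing the lift, and must exploit that truncation near vertices costs nothing at Sobolev regularity $\frac12$ (in contrast with regularity $\frac32$, where $\gamma_0(Q)$ sits inside $H^{3/2}(\partial\Omega)$ as a genuinely smaller, corner-constrained subspace).
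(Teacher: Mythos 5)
Your proof is correct and follows the same overall strategy as the paper: both sides of the identity are continuous linear functionals on $H^{1/2}(\partial\Omega)$ (since $\nabla(\Delta q),\nabla(\Delta\phi)\in H(div,\Omega)$ give normal traces in $H^{-1/2}(\partial\Omega)$), the identity is already known on the traces of $Q$, and one concludes by density of those traces in $H^{1/2}(\partial\Omega)$. The only real difference is how that density is realized. The paper takes restrictions of $C^{\infty}(\mathbb{R}^{2})$ functions (dense in $H^{1/2}(\partial\Omega)$ for a Lipschitz domain) and lifts each $\psi_{n}$ into $Q$ by solving the biharmonic Dirichlet problem with data $(\psi_{n},0)$; you instead truncate the boundary datum near the vertices, exploiting that $C_{c}^{\infty}$ is dense in $H^{s}$ of an interval for $s\le\tfrac12$, and then write down an explicit product lift $g(s)\psi(t)$ with vanishing normal derivative. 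Your construction is more hands-on but also more transparent about the corner issue: the paper's lift implicitly requires the pair $(\psi_{n},0)$ to be an admissible $H^{2}$ trace pair at the vertices, a compatibility question your truncation sidesteps entirely. Both routes deliver the needed density, and the limiting argument is identical.
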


\begin{proof}
As we know, if $\Omega$ is a Lipschitz domain then the space $\big\{u|_{\partial\Omega}: u\in C^{\infty}(\mathbb{R}^{2})\big\}$ is dense  in $H^{1/2}(\partial\Omega)$ \cite[Proposition 3.32]{cdBook}. Therefore, there exists a sequence $\{\psi_{n}\}\subset C^{\infty}(\partial\Omega)$ such that $\psi_{n}\rightarrow p$ in $H^{1/2}(\partial\Omega)$. Let $u_{n}$ be the weak solution of the following PDE
\begin{align*}
\Delta^{2}u_{n}&=0\text{   in   }\Omega,\\
u_{n}&=\psi_{n}\text{   on   }\partial\Omega,\\
\partial u_{n}/\partial n&=0\text{   on   }\partial\Omega.
\end{align*}
Clearly, $u_{n}\in Q$ and $u_{n}|_{\partial\Omega}=\psi_{n}$ then for any $\epsilon>0$, we have
\begin{align*}
\Big|\Big\langle \frac{\partial(\Delta \phi)}{\partial n}-\alpha\frac{\partial(\Delta q)}{\partial n},p\Big\rangle_{-\frac{1}{2},\frac{1}{2},\partial\Omega}\Big|&=\Big|\Big\langle \frac{\partial(\Delta \phi)}{\partial n}-\alpha\frac{\partial(\Delta q)}{\partial n},p-\psi_{n}\Big\rangle_{-\frac{1}{2},\frac{1}{2},\partial\Omega}\Big|\\
&\leq \Big\|\frac{\partial(\Delta \phi)}{\partial n}-\alpha\frac{\partial(\Delta q)}{\partial n}\Big\|_{H^{-1/2}(\partial\Omega)}\|p-\psi_{n}\|_{H^{1/2}(\partial\Omega)}\\
&\leq \epsilon.
\end{align*}
This completes the rest of the proof.
\end{proof}
A use of Lemma \ref{lemma:regularity} along with discrete trace inequality for $H^{1}$ functions and standard interpolation error estimates \cite{BScott2008FEM} completes the proof of the following lemma.
\begin{lemma}\label{krsna}
The optimal control satisfies the following error estimate
\begin{align*}
\|q-q_{h}\|+\|q-q_{h}\|_{Q_{h}}\leq &Ch^{min(\gamma,1)}\left(\|q\|_{H^{2+\gamma_1}(\Omega)}+\|\nabla(\Delta q)\|+\|\phi\|_{H^{2+\gamma_2}(\Omega)}+\|f\|\right)\\&+\left(\sum_{T\in \mathcal{T}_{h}}h_{T}^{4}\|u-u_{d}\|^{2}_{T}\right)^{1/2},
\end{align*}
where $\gamma$ and $C$ are as in Theorem \ref{thm:EnergyEstimate}.
\end{lemma}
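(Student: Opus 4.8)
The plan is to use the standard Céa-type argument for the $C^0$-interior penalty discretization of the optimality system, combining the energy-norm estimate of Theorem \ref{thm:EnergyEstimate} with the extra regularity $\Delta q \in H^1(\Omega)$, $\nabla(\Delta q)\in H(\mathrm{div},\Omega)$ now available from Lemma \ref{lemma:regularity}. First I would recall that by the equivalence of the seminorms $\|\cdot\|_h$ and $\|\cdot\|_{Q_h}$ on $Q_h$ (noted after \eqref{3:6} via the trace inequality), it suffices to bound $\|q-q_h\| + \|q-q_h\|_h$, and the $\|\cdot\|_h$ part is exactly $\||q-q_h\||_h$ up to the $L_2$ term, so Theorem \ref{thm:EnergyEstimate} already delivers
\begin{align*}
\|q-q_h\|_{Q_h} \le Ch^{\min(\gamma,1)}\left(\|q\|_{H^{2+\gamma_1}(\Omega)}+\|\phi\|_{H^{2+\gamma_2}(\Omega)}+\|f\|\right)+\left(\sum_{T\in\mathcal{T}_h}h_T^4\|u-u_d\|_T^2\right)^{1/2}.
\end{align*}
Thus the only genuinely new work is the $\|q-q_h\|$ (i.e. $L_2(\Omega)$) bound, and the claimed estimate then follows by adding the two.

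For the $L_2$ estimate I would split $q - q_h = (q - I_h q) + (I_h q - q_h)$, where $I_h q \in Q_h$ is the Lagrange interpolant. The first term is handled by standard interpolation error estimates \cite{BScott2008FEM}: $\|q - I_h q\| \le C h^{2+\min(\gamma_1,1)}\|q\|_{H^{2+\gamma_1}(\Omega)}$, which is of higher order than needed. For the discrete error $\chi_h := I_h q - q_h \in Q_h$, I would test the discrete control equation \eqref{3:10} against $\chi_h$, test the continuous equation \eqref{2:5} (rewritten using Lemma \ref{lemma:importance} and \eqref{5:4} as $\alpha a(q,p) = -\langle \partial(\Delta q)/\partial n, p\rangle$, or equivalently keep it in the form $\alpha a(q,p)=a(p,\phi)-(u-u_d,p)$) against a suitable object, and subtract. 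Using coercivity \eqref{coercivity} of $a_h$ on $Q_h$ one gets $c\|\chi_h\|_h^2 \le \alpha^{-1}a_h(\chi_h,\chi_h)$, and the right-hand side decomposes into (a) consistency/interpolation residuals involving $q$ and its derivatives across edges — here the extra term $\|\nabla(\Delta q)\|$ appears precisely because the consistency error for the $C^0$-IP form applied to the non-smooth $q$ is controlled by $\Delta q \in H^1$ rather than by $H^{2+\epsilon}$ regularity of $q$ itself — (b) the adjoint-state residual $a_h(\chi_h,\phi) - a_h(\chi_h,\phi_h)$, bounded via Theorem \ref{theorem:State_andAdjointState} by $Ch^{\min(\gamma,1)}(\dots)\|\chi_h\|_h$, and (c) the data term $(u-u_d,\cdot)$ producing the weighted $L_2$ sum $(\sum_T h_T^4\|u-u_d\|_T^2)^{1/2}$ after a Cauchy–Schwarz with the enriching-operator bound of Lemma \ref{lem:EnrichApprx}. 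Dividing by $\|\chi_h\|_h$ and invoking once more the equivalence of $\|\cdot\|_h$ and $\|\cdot\|$-controlled quantities (or a discrete Poincaré inequality on $Q_h$, $\|\chi_h\| \le C\|\chi_h\|_{Q_h}$) converts the energy bound on $\chi_h$ into the $L_2$ bound.

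The main obstacle I anticipate is bookkeeping the consistency error of the $C^0$-IP bilinear form against the rough optimal control: one must insert $\Delta q \in H^1(\Omega)$ into the edge terms $\int_e \smean{\Delta p_h}\sjump{\partial r_h/\partial n}$ and integrate by parts element-wise, using $\Delta^2 q = 0$ from Lemma \ref{lemma:regularity} together with the identity $a(q,p) = -\langle \partial(\Delta q)/\partial n, p\rangle_{-1/2,1/2,\partial\Omega}$ of \eqref{5:4}; the $H(\mathrm{div},\Omega)$ membership of $\nabla(\Delta q)$ is exactly what legitimizes this normal-trace pairing and lets the boundary contributions be absorbed. Everything else — interpolation estimates, trace and inverse inequalities, the enriching-operator bound — is routine and taken from the references already cited, so I would not grind through those details but simply assemble them.
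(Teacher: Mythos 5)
There is a genuine gap, and it sits at the very first step. You reduce $\|q-q_h\|_{Q_h}$ to $\|q-q_h\|_h$ by invoking the equivalence of $\|\cdot\|_{Q_h}$ and $\|\cdot\|_h$, but that equivalence is stated (and is only true, via the trace and inverse inequalities for piecewise polynomials) on the finite element space $Q_h$; the error $q-q_h$ does not belong to $Q_h$. If that reduction were valid, the term $\|\nabla(\Delta q)\|$ would not be needed in the statement at all --- its presence is exactly the signal that the extra edge term $\sum_{e\in\mathcal{E}_h}|e|\,\|\smean{\Delta(q-q_h)}\|_e^2$ appearing in $\|\cdot\|_{Q_h}$ but not in $\|\cdot\|_h$ is the genuinely new quantity to control. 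The argument the paper gestures at is: split $q-q_h=(q-I_hq)+(I_hq-q_h)$; for the discrete part $I_hq-q_h\in Q_h$ the norm equivalence is legitimate and Theorem \ref{thm:EnergyEstimate} plus interpolation applies; for $q-I_hq$ one uses the scaled (discrete) trace inequality $|e|\,\|w\|_e^2\le C\left(\|w\|_{T}^2+|e|^2\|\nabla w\|_{T}^2\right)$ with $w=\Delta(q-I_hq)$, which is meaningful precisely because Lemma \ref{lemma:regularity} gives $\Delta q\in H^1(\Omega)$ (note $\nabla\Delta I_hq=0$ elementwise), and which produces the contribution $Ch\|\nabla(\Delta q)\|+Ch^{\gamma_1}\|q\|_{H^{2+\gamma_1}(\Omega)}$.

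Conversely, the part you single out as ``the only genuinely new work,'' namely the $L_2$ bound on $\|q-q_h\|$, requires no work at all: by the definition \eqref{3:5} one has $\|q-q_h\|\le \||q-q_{h}|\|_{h}$, so it is already contained in Theorem \ref{thm:EnergyEstimate}. The machinery you then set up for it is superfluous, and its closing step is also unsound: the discrete Poincar\'e inequality $\|\chi_h\|\le C\|\chi_h\|_{Q_h}$ cannot hold on $Q_h$, since \eqref{3:6} is only a seminorm there (it vanishes on affine functions), as the paper notes right after \eqref{3:6}; it is a norm only on $V_h$. Your instinct that $\|\nabla(\Delta q)\|$ must enter through edge traces of $\Delta q$, legitimized by $\nabla(\Delta q)\in H(div,\Omega)$ and $\Delta q\in H^1(\Omega)$, is correct, but it has to be attached to the $\smean{\Delta(q-q_h)}$ term of the $Q_h$-norm rather than to an $L_2$ consistency argument.
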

In the following theorem, we derive the optimal order $L_2$-norm estimate for the optimal control $q \in Q$.
\begin{theorem}\label{thm:L2estimate}
The optimal control $q$ satisfies the following optimal order error estimate
\begin{align*}
\|q-q_{h}\|\leq Ch^{2\beta}\left(\|q\|_{H^{2+\beta}(\Omega)}+\|\nabla(\Delta q)\|+\|f\|+\|\phi\|_{H^{3}(\Omega)}\right),
\end{align*}
where $\beta>0$ is the elliptic regularity for the optimal control $q$.
\end{theorem}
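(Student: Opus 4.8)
The plan is to run an Aubin--Nitsche (duality) argument adapted to the $C^0$-IP setting, using the regularity of the adjoint state $\phi$ and the extra regularity of the optimal control $q$ established in Lemma \ref{lemma:regularity}. First I would set up the auxiliary dual problem: given the error $q-q_h$, consider $w\in H^1(\Omega)$ solving $(\nabla w,\nabla p)=-\frac1\alpha\langle\partial(\Delta\phi)/\partial n,p\rangle$ from Lemma \ref{lemma:solution}, or more precisely the relevant dual triple (dual state, dual adjoint) whose right-hand side is $q-q_h$, and I would invoke elliptic regularity on the convex polygon to get $H^{2+\beta}$-type bounds, together with the $H^3$-regularity of $\phi$ noted at the start of Section \ref{error analysis:L^{2}}. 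The identity \eqref{eq:5:4}, namely $a(q,p)=-\frac1\alpha\langle\partial(\Delta\phi)/\partial n,p\rangle$ for all $p\in Q$, together with Lemma \ref{lemma:5.3}, is what lets the boundary duality pairings be rewritten in a form that the discrete scheme \eqref{3:8}--\eqref{3:10} can see.

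The key steps, in order, would be: (i) write $\|q-q_h\|^2$ (or the relevant $L_2$ quantity) as the value of a suitable linear functional applied to $q-q_h$ via the dual problem; (ii) split that expression using the continuous optimality system \eqref{2:3}--\eqref{2:5} and its discrete counterpart, introducing the enriching operator $E_h$ of subsection \ref{eo} and Lemma \ref{lem:EnrichApprx} to move between $Q_h$ and $\tilde Q_h\subset Q$ so that $a(\cdot,\cdot)$ and $a_h(\cdot,\cdot)$ can be compared; (iii) bound the consistency / nonconformity terms using the mesh-dependent norms \eqref{3:4}--\eqref{3:6}, the trace inequalities, and Lemma \ref{lemma:regularity} (which supplies $\Delta q\in H^1(\Omega)$ and $\nabla(\Delta q)\in H(\mathrm{div},\Omega)$, exactly the regularity needed to make the boundary terms $\langle\partial(\Delta q)/\partial n,\cdot\rangle$ meaningful and estimable); (iv) insert the energy-norm estimates from Theorem \ref{thm:EnergyEstimate}, Theorem \ref{theorem:State_andAdjointState}, and Lemma \ref{krsna}, which each carry a factor $h^{\min(\gamma,1)}$, and pair them against the $h^{\beta}$ gained from the dual problem's regularity, so that the products give the advertised $h^{2\beta}$; (v) absorb the data-oscillation term $(\sum_T h_T^4\|u-u_d\|_T^2)^{1/2}$, which is higher order, into the right-hand side.

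I expect the main obstacle to be step (iii): controlling the boundary duality pairing $\langle\partial(\Delta\phi)/\partial n - \alpha\,\partial(\Delta q)/\partial n,\, p\rangle_{-1/2,1/2,\partial\Omega}$ uniformly when $p$ is replaced by discrete objects (or by $E_h$ of discrete objects), since these live only in $H^{-1/2}(\partial\Omega)$ and the convex-polygon corners limit the regularity. This is precisely where Lemma \ref{lemma:importance} pays off: it lets one replace $a(\cdot,\cdot)$ by the Hessian bilinear form on $Q$, integrate by parts tangentially along each edge $\Gamma_k$ (as in the proof of Lemma \ref{lemma:importance}), and thereby trade the troublesome normal-derivative-of-Laplacian pairing for a tangential expression that vanishes on $Q$ because $\partial p/\partial n=0$ there. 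The relaxed angle condition (all interior angles $<180^\circ$, i.e. convexity, rather than $<120^\circ$) enters here through the elliptic regularity exponent $\beta$, which may be small but is strictly positive, so the rate $h^{2\beta}$ is the best one can honestly claim. Once the boundary terms are tamed, the remaining pieces are routine $C^0$-IP estimates of the type already used for Theorem \ref{thm:EnergyEstimate} and Lemma \ref{krsna}, and the proof closes by collecting terms.
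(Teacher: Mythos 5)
Your overall architecture matches the paper's: the proof is indeed a duality argument in which the dual problem is itself an optimal control problem with the error $q-q_{h}$ playing the role of the desired state, followed by a splitting through the two optimality systems and the enriching operator, with two $O(h^{\beta})$ factors multiplied to give $h^{2\beta}$. But two of your specific mechanisms are off, and they are exactly the steps that make the rate come out. First, the dual object is not the scalar Neumann solution $w$ of Lemma \ref{lemma:solution} (that $w$ is only used to prove Lemma \ref{lemma:regularity}); it is the full triple $(r,u_{r},\xi)$ from the auxiliary problem \eqref{5:6}, with $\xi\in H^{2}_{0}(\Omega)$ solving $a(\xi,v)=(u_{r}-(q-q_{h}),v)$, and the entire argument hinges on the quantitative bounds $\|r\|_{H^{2+\beta}(\Omega)}\leq C\|q-q_{h}\|$ and $\|\nabla(\Delta r)\|\leq C\|q-q_{h}\|$. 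The second of these is absent from your sketch: it is obtained by showing $\Delta^{2}r=0$, identifying $\Delta r$ (up to an additive constant) as the solution of a Neumann problem with datum $\frac{1}{\alpha}\,\partial(\Delta\xi)/\partial n$, verifying the compatibility condition by taking $p=1$ in the dual optimality condition, and combining the trace bound $\|\partial(\Delta\xi)/\partial n\|_{H^{-1/2}(\partial\Omega)}\leq C\left(\|\Delta^{2}\xi\|+\|\nabla(\Delta\xi)\|\right)$ with the $H^{3}$ regularity of $\xi$ on the convex polygon. Without this chain the pivotal term $\alpha a_{h}(r-r_{h},q-q_{h})$ cannot be bounded by $Ch^{2\beta}(\cdots)\|q-q_{h}\|$, and the trailing factor $\|q-q_{h}\|$ cannot be absorbed to close the estimate.

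Second, your proposed treatment of the boundary pairing is a misreading of where Lemma \ref{lemma:importance} acts. The tangential integration by parts you invoke occurs inside the proof of Lemma \ref{lemma:importance} itself, and the pairing $\langle\partial(\Delta\phi)/\partial n-\alpha\,\partial(\Delta q)/\partial n,\,p\rangle_{-\frac{1}{2},\frac{1}{2},\partial\Omega}$ does not vanish for general $p\in Q$ by any tangential trick; what the paper actually uses is Lemma \ref{lemma:5.3}, which extends the identity $\alpha\langle\partial(\Delta q)/\partial n,p\rangle=\langle\partial(\Delta\phi)/\partial n,p\rangle$ to all of $H^{1/2}(\partial\Omega)$ by density, together with its dual analogue \eqref{5:10} for the pair $(r,\xi)$, which is precisely what feeds the Neumann problem above. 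You also omit the observation $a_{h}(\phi-\phi_{h},r)=0$ and the secondary duality argument (via $\Delta^{2}\phi_{w}=w$) needed to bound $\|u^{h}_{q}-u_{q}\|$; both are required for the eight-term identity \eqref{5:14} to yield $h^{2\beta}$ rather than a lower rate. So the plan has the right shape, but the two estimates that actually produce $h^{2\beta}$ are missing or mis-specified.
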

\begin{proof}
We deduce the $L_2$-norm error estimate by duality argument. Following the discussion as in \cite{CHOWDHURYDGcontrol2015}, the auxiliary optimal control problem is to 
find $r\in Q$ such that
\begin{equation}\label{5:6}
j(r)=\min_{p\in Q}\;j(p):=\frac{1}{2}\|u_{p}-(q-q_{h})\|^{2}+\frac{\alpha}{2}|p|^{2}_{H^{2}(\Omega)},
\end{equation}
where $u_{p}=w+p$ and $w\in V$ satisfies the following equation
\begin{equation*}
\int_{\Omega}D^{2}w:D^{2}v\;dx=-\int_{\Omega}D^2p:D^{2}v\;dx\ \ \;\forall \;v\in V.
\end{equation*}
The standard theory of optimal control problems constrained by partial differential equations provide the existence of a unique solution $r\in Q$ of the above optimal control problem \eqref{5:6}. For a detailed discussion, we refer to \cite{39l1971,trolzstch2005BOOK}. It is easy to check that $r \in Q$ satisfies the following optimality condition:
\begin{equation*}
\alpha \int_{\Omega}D^{2}r:D^{2}p\;dx+\int_{\Omega}u_{r}\,u_{p}\;dx=(q-q_{h},u_{p})\ \ \;\forall\  p\in Q.
\end{equation*}
From Lemma \ref{lemma:importance}, we obtain that
\begin{equation}\label{5:7}
\alpha a(r,p)+(u_{r},u_{p})=(q-q_{h},u_{p})\ \ \;\forall\  p\in Q. 
\end{equation}
This implies
\begin{equation}\label{5:8}
\alpha a(r,p)+(u_{r},p)-a(\xi,p)=(q-q_{h},p)\ \ \;\forall\  p\in Q,
\end{equation}
with $\xi\in H^{2}_{0}(\Omega)$ satisfies the following equation
\begin{equation}\label{eq:5:9}
a(\xi,v)=(u_{r}-(q-q_{h}),v)\ \ \;\forall\  v\in H^{2}_{0}(\Omega).
\end{equation}
Elliptic regularity theory for clamped plate problems on convex domains imply that $\xi\in H^{3}(\Omega)\cap H^{2}_{0}(\Omega)$. 
From \eqref{eq:5:9}, we obtain 
\begin{eqnarray*}
\Delta^{2}\xi=u_r-(q-q_h)\quad\text{in}\quad\Omega,
\end{eqnarray*}
in the sense of distributions. Since, $\mathcal{D}(\Omega)$ is dense in $L_2(\Omega)$, we find that
\begin{eqnarray*}
&\Delta^{2}\xi =u_{r}-(q-q_{h})\quad\text{a.\;e.\;in}\ \ \quad \Omega,
\\
&\xi=0;\ \ \ \frac{\partial\xi}{\partial n}=0 \quad \text{on} \quad \partial\Omega\notag.
\end{eqnarray*}
Therefore $\nabla (\Delta\xi)\in H(div,\Omega)$ which implies $\frac{\partial(\Delta\xi)}{\partial n}\in H^{-1/2}(\partial\Omega)$. Using density of $C^{\infty}(\bar{\Omega})\times C^{\infty}(\bar{\Omega})$ in $H(div,\Omega)$ (see \cite{42t}), we find that
\begin{align}
\int_{\Omega}\nabla(\Delta\xi)\cdot\nabla p\;dx+\int_{\Omega}\Delta^{2}\xi\, p\;dx=\Big\langle\frac{\partial(\Delta\xi)}{\partial n},p\Big\rangle_{-\frac{1}{2},\frac{1}{2},\partial\Omega}\ \ \;\forall \ p\in H^{1}(\Omega).\label{intbyptsa}
\end{align}
Integration by parts along with \eqref{5:8} yields
\begin{equation}
\alpha a(r,p)=-\Big\langle \frac{\partial(\Delta\xi)}{\partial n},p\Big\rangle_{-\frac{1}{2},\frac{1}{2},\partial\Omega}\ \ \;\forall \ p\in Q.\label{dc}
\end{equation}
Choosing test functions from $\mathcal{D}(\Omega)$ in \eqref{5:7} and using the density argument, we obtain that
\begin{align}\label{dual optimal control}
\Delta^2r=0\ \ \;\text{a.\;e.\;in}\ \ \;\Omega.
\end{align}
Arguments similar to the ones used for proving Lemma \ref{lemma:regularity} along with \eqref{dc} yields  $\nabla(\Delta r)\in H(div,\Omega)$. Therefore, $\frac{\partial(\Delta r)}{\partial n}\in H^{-1/2}(\partial\Omega)$
which further implies
\begin{align*}
\alpha\Big\langle \frac{\partial(\Delta r)}{\partial n},p\Big\rangle_{-\frac{1}{2},\frac{1}{2},\partial\Omega}=\Big\langle \frac{\partial(\Delta\xi)}{\partial n},p\Big\rangle_{-\frac{1}{2},\frac{1}{2},\partial\Omega}\ \ \;\forall \ p\in Q.
\end{align*}
Using Lemma \ref{lemma:5.3}, we find that 
\begin{align}
\alpha\Big\langle \frac{\partial(\Delta r)}{\partial n},p\Big\rangle_{-\frac{1}{2},\frac{1}{2},\partial\Omega}=\Big\langle \frac{\partial(\Delta\xi)}{\partial n},p\Big\rangle_{-\frac{1}{2},\frac{1}{2},\partial\Omega}\ \ \;\forall\  p\in H^{1/2}(\partial\Omega).\label{5:10}
\end{align}
To derive the $L_2$-norm error estimate, $\langle q\rangle + Q_{h}$ is used as a test function space.\\
Using the same arguments as in \cite[Thorem 5.4]{CHOWDHURYDGcontrol2015}, we obtain
\begin{align}
 \|q-q_{h}\|^{2}  =&-\int_{\Omega}(q-q_{h})(u^{h}_{q}-u_{q})\;dx-a_{h}(\xi,u^{h}_{q-q_{h}})\notag\\&+\alpha a_{h}(r-r_{h},q-q_{h})+a_{h}(\phi-\phi_{h},r_{h}-r)+a_{h}(\phi-\phi_{h},r)\notag\\&-\int_{\Omega}(u_{f}-u_{f}^{h})\,r_{h}\;dx+\int_{\Omega}(u_{q}-u^{h}_{q_{h}})\,(r-r_{h})\;dx+\int_{\Omega}u_{r}\,(u^{h}_{q}-u_{q})\;dx.\label{5:14}
\end{align}
Now, we estimate each term on the right hand side of \eqref{5:14} one by one.
The following duality argument is used to find the estimate for the first term.
\begin{eqnarray}\label{po1}
\|u^{h}_{q}-u_{q}\|=\sup_{w\in L^{2}(\Omega),w\neq 0}\frac{(u^{h}_{q}-u_{q},w)}{\|w\|}.
\end{eqnarray}
Consider the following dual problem
\begin{eqnarray}\label{auxadjeqn}
&\Delta^{2}\phi_{w}=w\text{ in }\Omega,\\&\phi_{w}=0,\quad\frac{\partial\phi_{w}}{\partial n}=0 \text{ on }\partial\Omega.\notag
\end{eqnarray}
Let $P_{h}(w)$ be the $C^{0}$-interior penalty approximation of the solution of \eqref{auxadjeqn}. Hence,
\begin{equation*}
\begin{split}
    (u^{h}_{q}-u_{q},w)&=a_{h}(\phi_{w},u^{h}_{q}-u_{q})\\
    &=a_{h}(\phi_{w}-P_{h}(\phi_w),u^{h}_{q}-u_{q})\notag\\&\leq C\|\phi_{w}-P_{h}(\phi_{w})\|_{Q_{h}}\|u^{h}_{q}-u_{q}\|_{Q_{h}}\\
    &\leq Ch\|w\|\; \|u^{h}_{q}-u_{q}\|_{Q_{h}}.
\end{split}
\end{equation*}
We define
$u^h_{q-q_h}$ as $u^h_{q-q_h}=v_{0h}+q-q_h$, where $v_{0h}\in V_h$ solves the following equation
\begin{eqnarray*}
a_{h}(v_{0h},v_{h})=-a_{h}(q-q_{h},v_{h})\ \ \; \forall\  v_{h}\in V_{h}
\end{eqnarray*}
Using the coercivity of $a_{h}(\cdotp,\cdotp)$, we find that
\begin{align*}
c_{1}\|v_{0h}\|^2_{h}&\leq a_h(v_{0h},v_{0h})=-a_{h}(q-q_{h},v_{0h})\\
&=-\sum_{T\in \mathcal{T}_{h}}\int_{T}\Delta(q-q_{h})\Delta v_{0h}\;dx-\sum_{e\in \mathcal{E}_{h}}\int_{e}\smean{\Delta(q-q_{h})}\sjump{\partial v_{0h}/\partial n}\;ds\\
&\quad-\sum_{e\in \mathcal{E}_{h}}\int_{e}\smean{\Delta v_{0h}}\sjump{\partial (q-q_{h})/\partial n}\;ds-\sum_{e\in\mathcal{E}_{h}}\sigma/|e|\int_{e}\sjump{\partial(q-q_{h})/\partial n}\sjump{\partial v_{0h}/\partial n}\;ds\\&\leq\left(\sum_{T\in \mathcal{T}_{h}}\|\Delta(q-q_{h})\|^2_{T}+\sum_{e\in \mathcal{E}_{h}}|e|\|\smean{\Delta(q-q_{h})}\|_{e}^2+\sigma\sum_{e\in\mathcal{E}_{h}}\|\sjump{\partial(q-q_{h})/\partial n}\|_{e}^2\right)^{1/2}\\&\quad\left(\sum_{T\in \mathcal{T}_{h}}\|\Delta v_{0h}\|_{T}^{2}+(\sigma+2)\sum_{e\in\mathcal{E}_{h}}\frac{1}{|e|}\|\sjump{\partial v_{0h}/\partial n}\|_{e}^2+\sum_{e\in\mathcal{E}_{h}}|e|\|\smean{\Delta v_{0h}}\|_{e}^2\right)^{1/2}\\ &\leq C_{2}|e|^{\beta}\left(\|q\|_{H^{2+\beta}(\Omega)}+\|f\|+\|\nabla(\Delta q)\|+|e|^{-\beta}\left(\sum_{T\in \mathcal{T}_{h}}h^{4}\|u-u_{d}\|^{2}_{T}\right)^{1/2}\right)\|v_{0h}\|_{Q_{h}}.
\end{align*} 
Now using the equivalence of $\|.\|_{Q_{h}}$ and $\|.\|_{h}$ on the finite dimensional space $V_{h}$, we get the following estimate 
\begin{align}\label{L2:estimate1}
\|v_{0h}\|_{Q_{h}}\leq C_{3}|e|^{\beta}\left(\|q\|_{H^{2+\beta}(\Omega)}+\|\nabla(\Delta q)\|+\|f\|+\left(h^{2-\beta}\|u-u_{d}\|\right)\right).
\end{align}
A use of triangle inequality along with \ref{L2:estimate1} and Theorem \ref{thm:EnergyEstimate} yields
\begin{align} \label{L2:estimate2}
\|u^{h}_{q}-q\|_{Q_{h}}\leq C_{4}|e|^{\beta} \left(\|q\|_{H^{2+\beta}(\Omega)}+\|f\|+\|\nabla(\Delta q)\|+|e|^{-\beta}\left(\sum_{T\in\mathcal{T}_{h}}h^{4}\|u-u_{d}\|^{2}_{T}\right)^{1/2}\right),
\end{align}
 and hence using \eqref{po1} and \eqref{L2:estimate2}, we obtain
\begin{align}\label{L2:estimate3}
\|u^{h}_{q}-u_{q}\|\leq C_{5}h^{1+\beta}\left(\|q\|_{H^{2+\beta}(\Omega)}+\|f\|+\|\nabla(\Delta q)\|+h^{2-\beta}\|u-u_{d}\|\right).
\end{align}
The estimate for the second term of the right hand side of \eqref{5:14} follows in the same line as in \cite{CHOWDHURYDGcontrol2015} and hence skipped. 
In order to estimate the third term of the right hand side of \eqref{5:14}, we note that using similar arguments as in \cite{CHOWDHURYDGcontrol2015}, we obtain
\begin{align}\label{frp}
\|r\|_{H^{2+\beta}(\Omega)}\leq C\|q-q_{h}\|.
\end{align}
Next 
\begin{align*}
a_{h}(r-r_{h},q-q_{h})&
\leq C\|r-r_{h}\|_{Q_{h}}\|q-q_{h}\|_{Q_{h}}
\\
&\leq  C|e|^{2\beta}\left(\|q\|_{H^{2+\beta}(\Omega)}+\|f\|+\|\phi\|_{H^{3}(\Omega)}\right.\\ &\left.\quad+ h^{2-\beta}\|u-u_{d}\|\right)\left(\|r\|_{H^{2+\beta}(\Omega)}+\|\nabla(\Delta r)\|\right).
\end{align*}
Using the density of $C^{\infty}(\bar{\Omega})\times C^{\infty}(\bar{\Omega})$ in $H(div,\Omega)$ with respect to the natural norm induced on $H(div,\Omega)$, we find that
\begin{eqnarray*}
\int_{\Omega}\nabla(\Delta r)\cdot\nabla p\;dx+\int_{\Omega}\Delta^{2}r\,p\;dx=\Big\langle\frac{\partial \Delta r}{\partial n},p\Big\rangle_{-\frac{1}{2},\frac{1}{2},\partial\Omega}. 
\end{eqnarray*}
From \eqref{dual optimal control} and \eqref{5:10}, we obtain that
\begin{eqnarray}
\int_{\Omega}\nabla(\Delta r)\cdot\nabla p\;dx=\frac{1}{\alpha}\Big\langle\frac{\partial\Delta\xi}{\partial n},p\Big\rangle_{-\frac{1}{2},\frac{1}{2},\partial\Omega}\ \ \;\forall\, p\in H^{1}(\Omega).\label{3}
\end{eqnarray}
Note that by taking $p=1$ in \eqref{5:8}, we obtain $\int_{\Omega}\left(u_{r}-(q-q_{h})\right)\,dx=0$ and using \eqref{intbyptsa}, we conclude that \eqref{3} satisfies the compatibility condition. 
Taking $p=\Delta r-\frac{1}{|\Omega|}\int_{\Omega}\Delta r\;dx$ in \eqref{3} with a use of trace and Poincare-Friedrich's inequality, we find that
\begin{eqnarray}
\|\nabla(\Delta r)\|\leq C\Big\|\frac{\partial\Delta\xi}{\partial n}\Big\|_{H^{-1/2}(\partial\Omega)}.\label{9}
\end{eqnarray}
Using \eqref{intbyptsa}, we obtain 
\begin{eqnarray}
\Big\|\frac{\partial\Delta\xi}{\partial n}\Big\|_{H^{-1/2}(\partial\Omega)}\leq C\left(\|\Delta^{2}\xi\|+\|\nabla(\Delta\xi)\|\right)\label{trcest}.
\end{eqnarray}
Using the elliptic regularity theory, the solution of \eqref{eq:5:9} satisfies $\|\xi\|_{H^{3}(\Omega)}\leq C\|u_{r}-(q-q_{h})\|$ and $\Delta^{2}\xi=u_{r}-(q-q_{h})$. Now using \eqref{9} and \eqref{trcest}, we find that 
$\|\nabla(\Delta r)\| \leq C\|q-q_{h}\|$.
Therefore using \eqref{frp}, we have 
\begin{align}\label{latte}
a_h(r-r_{h},q-q_{h})\leq C h^{2\beta}\left(\|q\|_{H^{2+\beta}(\Omega)}+\|f\|+\|\phi\|_{H^{3}(\Omega)}+h^{2-\beta}\|u-u_{d}\|\right)\|q-q_{h}\|.
\end{align}
Using the same arguments, we obtain the following estimate
\begin{align}
a_{h}(\phi-\phi_{h},r_{h}-r)\leq Ch^{1+\beta}\left(\|q\|_{H^{2+\beta}(\Omega)}+\|f\|+\|\phi\|_{H^{3}(\Omega)}+h^{2-\beta}\|u-u_{d}\|\right)\|q-q_{h}\|.
\end{align}
Note that as $\phi\in H^{2}_{0}(\Omega),\;\phi_{h}\in H^{1}_{0}(\Omega),$ we obtain 
\begin{equation}
a_{h}(\phi-\phi_{h},r)=0.\label{5:24}
\end{equation}
The estimate for the remaining terms of \eqref{5:14} and the rest of the proof follows via similar arguments as in \cite{CHOWDHURYDGcontrol2015}.
\end{proof}

\section{A posteriori error estimate}\label{Apea}
In this section, we have established \emph{a posteriori} error estimates for the model optimal control problem \eqref{model_problem}-\eqref{b2}. Below, we define the auxiliary problems: find $(u_{a},q_{a},\phi_{a})\in H^{1}(\Omega)\times Q \times V$ such that
\begin{align}
u_{a}&=u^{a}_{f}+q_{h}, \,\,u^{a}_{f}\in V, \label{8.1}\\
a(u^{a}_{f},v)&=(f,v)-\tilde{a}_{h}(q_{h},v)\quad\forall \:v \in\: V, \label{8.2}\\
a(v,\phi_{a})&=(u_{h}-u_{d},v)\quad \forall\: v \in\: V, \label{8.3} \\
\alpha a(q_{a},p)+(q_{a},p)&=\tilde{a}_{h}(p,\phi_{h})-(u^{h}_{f}-u_{d},p)\quad \forall~~ p\in Q, \label{8.4}
\end{align}
where
\begin{equation*}
\tilde{a}_{h}(v,w)= \sum_{T\in \mathcal{T}_{h}}\int_{T}\Delta v \,\Delta w \,dx \quad \forall ~~v,w \in H^{2}(\Omega,\mathcal{T}_{h}). 
\end{equation*}
 Another auxiliary problem is defined as follows: for $q_{a}\in Q$ satisfying \ref{8.4}, we define $ u(q_{a}) \in Q$ such that 
\begin{align}
 u(q_{a})&=u_{f}(q_{a})+q_{a} , \,\,u_{f}(q_{a}) \in V,\label{8.5} \\
 a(u_{f}(q_{a}),v)&=(f,v)-a(q_{a},v)\quad \forall \in V.\label{8.6}
\end{align}
 Note that for $v,w \in H^{2}(\Omega),\, \tilde{a}_{h}(v,w)= a(v,w)$.
Define for $v_{h}\in H^{2} (\Omega,\mathcal{T}_{h}) $
\begin{equation*}
\lvert v_{h}\rvert^{2}_{2,h} \:\: =\sum_{T\in \mathcal{T}_{h}} \lVert\Delta v_{h} \rVert^{2}_{T},
\end{equation*}
and
\begin{equation*}
\lVert {v_{h}} \rVert^{2}_{2,h} \:\: =\lvert{v_{h}}\rvert^{2}_{2,h} + \lVert {v_{h}}\rVert^{2}. 
\end{equation*}
Now, we prove a lemma which is useful for \emph{a posteriori} error analysis.
\begin{lemma}\label{lemma8.1}
There exists a positive constant $C$ such that there holds
\begin{align*}
\lVert\lvert q-q_{h}\rvert\rVert_{h}&+\lVert\lvert u-u_{h}\rvert\rVert_{h}+\lVert\lvert\phi-\phi_{h}\rvert\rVert_{h}\leq C \bigg[ \lVert q_{a}-q_{h}\rVert_{2,h}+\lVert u_{a}-u_{h}\rVert_{2,h}+\lVert \phi_{a}-\phi_{h}\rVert_{2,h}\\
& \sum_{e\in \mathcal{E}_{h}} \left(\frac{\sigma}{\lvert e\rvert}\right)^{1/2} \left(\left\lVert \sjump{\partial q_{h}/\partial n} \right\rVert_{e} + \left\lVert\sjump{\partial u_{h}/\partial n} \right\rVert_{e} + \left\lVert\sjump{\partial \phi_{h}/\partial n} \right\rVert_{e}\right)\bigg].
\end{align*}
\end{lemma}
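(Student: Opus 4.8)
The plan is to interpose the continuous auxiliary solutions $(q_a,u_a,\phi_a)$ of \eqref{8.1}--\eqref{8.4}, together with the auxiliary state $u(q_a)$ of \eqref{8.5}--\eqref{8.6}, between the exact and the discrete triples, and to estimate each error by the triangle inequality $\lVert\lvert\zeta-\zeta_h\rvert\rVert_h\le\lVert\lvert\zeta-\zeta_a\rvert\rVert_h+\lVert\lvert\zeta_a-\zeta_h\rvert\rVert_h$ for $\zeta\in\{q,u,\phi\}$. Two structural facts do most of the work. First, the auxiliary solutions are piecewise conforming: $q_a\in Q$, $\phi_a\in V$, and (by Lax--Milgram applied to \eqref{8.2}) $u_a=u^a_f+q_h$ with $u^a_f\in V$; hence $\sjump{\p q_a/\p n}=\sjump{\p\phi_a/\p n}=0$ and $\sjump{\p u_a/\p n}=\sjump{\p q_h/\p n}$ on every $e\in\cE_h$. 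Second, Lemma \ref{lemma:importance} gives $\|\Delta w\|=|w|_{H^2(\O)}$ for $w\in Q$, so the broken $\Delta$-seminorm of a conforming function is its $H^2$-seminorm, and $\alpha a(\cdot,\cdot)+(\cdot,\cdot)$ is coercive on $Q$ with respect to $\|\cdot\|_{H^2(\O)}$.

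From the first fact, expanding $\lVert\lvert\zeta_a-\zeta_h\rvert\rVert_h^2$ and $\lVert\lvert\zeta-\zeta_a\rvert\rVert_h^2$ shows that the only non-conforming contributions are jumps of $q_h$, $u_h$, $\phi_h$: for instance $\lVert\lvert q_a-q_h\rvert\rVert_h\le\|q_a-q_h\|_{2,h}+\big(\sum_{e\in\cE_h}\frac{\sigma}{|e|}\|\sjump{\p q_h/\p n}\|_e^2\big)^{1/2}$, and analogously for $\phi$ and for $u$ (the $u$-terms also picking up $\|\sjump{\p u_h/\p n}\|_e$), while $\lVert\lvert\zeta-\zeta_a\rvert\rVert_h$ contributes, besides $\|\zeta-\zeta_a\|_{2,h}$, only such jumps as well. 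All of these terms already occur on the right-hand side of the assertion, so the proof reduces to bounding $\|q-q_a\|_{2,h}+\|u-u_a\|_{2,h}+\|\phi-\phi_a\|_{2,h}$ by that right-hand side.

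Subtracting \eqref{8.2}--\eqref{8.4} from \eqref{2:3}--\eqref{2:5}, and using $\tilde a_h=a$ on $H^2(\O)\times H^2(\O)$ together with the splitting of integrals over $\cT_h$, the errors $u_f-u^a_f,\ \phi-\phi_a\in V$ and $q-q_a\in Q$ satisfy
\begin{align*}
a(u_f-u^a_f,v)&=-\sum_{T\in\cT_h}\int_T\Delta(q-q_h)\,\Delta v\,dx\qquad&&\forall\,v\in V,\\
a(\phi-\phi_a,v)&=(u-u_h,v)\qquad&&\forall\,v\in V,\\
\alpha\,a(q-q_a,p)+(q-q_a,p)&=\sum_{T\in\cT_h}\int_T\Delta p\,\Delta(\phi-\phi_h)\,dx+(u^h_f-u_f,p)\qquad&&\forall\,p\in Q.
\end{align*}
Testing these respectively with $u_f-u^a_f$, $\phi-\phi_a$, $q-q_a$, and using the $V$-ellipticity of $a$, the $Q$-coercivity of $\alpha a(\cdot,\cdot)+(\cdot,\cdot)$, and the Cauchy--Schwarz and Poincaré inequalities, yields $|u_f-u^a_f|_{H^2(\O)}\le|q-q_h|_{2,h}$, $|\phi-\phi_a|_{H^2(\O)}\le C\|u-u_h\|$ and $\|q-q_a\|_{H^2(\O)}\le C\big(|\phi-\phi_h|_{2,h}+\|u^h_f-u_f\|\big)$. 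The data on the right are then re-decomposed through the auxiliary solutions: $|q-q_h|_{2,h}\le|q-q_a|_{H^2(\O)}+\|q_a-q_h\|_{2,h}$, $|\phi-\phi_h|_{2,h}\le|\phi-\phi_a|_{H^2(\O)}+\|\phi_a-\phi_h\|_{2,h}$, while $\|u-u_h\|$ and $\|u^h_f-u_f\|=\|(u_h-q_h)-(u-q)\|$ are handled by inserting the exact state $u(q_a)$: from $a(u_f-u_f(q_a),v)=-a(q-q_a,v)$ and $a(u_f(q_a)-u^a_f,v)=-\sum_T\int_T\Delta(q_a-q_h)\Delta v\,dx$ one obtains $\|u-u(q_a)\|\le C\|q-q_a\|_{H^2(\O)}$ and $\|u(q_a)-u_a\|\le C\|q_a-q_h\|_{2,h}$, hence $\|u-u_h\|\le C\big(\|q-q_a\|_{H^2(\O)}+\|q_a-q_h\|_{2,h}+\|u_a-u_h\|_{2,h}\big)$, and $\|u^h_f-u_f\|$ is of the same type.

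At this stage one is left with a coupled system of inequalities for $\|q-q_a\|_{H^2(\O)}$, $\|u-u_a\|_{2,h}$, $\|\phi-\phi_a\|_{H^2(\O)}$ whose right-hand sides contain these same three quantities together with the already-controlled discrete--auxiliary norms and jump terms; decoupling it is the crux of the argument and the step I expect to be delicate, since equation-by-equation the loop $q\to\phi\to u\to q$ does not close with an $O(1)$ constant. I would resolve it by exploiting the structure of the optimality system: testing the three error relations simultaneously and adding, the cross-terms linking state, adjoint and control recombine --- as in the a posteriori analysis of \cite{Gudi2014DGControl} --- into a single coercive expression governed by the reduced control problem (whose coercivity constant is proportional to $\alpha$), allowing the $\|q-q_a\|_{H^2(\O)}$, $\|u-u_a\|_{2,h}$, $\|\phi-\phi_a\|_{H^2(\O)}$ contributions on the right to be absorbed into the left. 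Feeding the resulting bound back into the triangle inequalities of the first two steps, and using $\lVert\lvert w\rvert\rVert_h\approx\|w\|_{2,h}$ for the conforming errors (up to jump terms for $u-u_a$), gives the claimed estimate; the remaining ingredients --- the various triangle inequalities and trace bounds --- are routine.
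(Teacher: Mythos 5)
Your proposal is correct and follows essentially the same route as the paper: reduce to the broken $H^2$-type norms using the conformity of $q_a,\phi_a$ (and of $u_a-q_h$) so that only the jumps of $q_h,u_h,\phi_h$ survive, write the error equations, and close the $q\to\phi\to u\to q$ loop by playing the state and adjoint error equations against each other so that the cross terms recombine into the coercive quantity $\alpha\,a(q-q_a,q-q_a)+\lVert u-u(q_a)\rVert^2$. The recombination you describe in outline is exactly the paper's identity $a(q_a-q,\phi-\phi_a)=(u-u_h,\,u_f-u_f(q_a))$, obtained by taking $v=\phi-\phi_a$ in the state error equation and $v=u_f-u_f(q_a)$ in the adjoint error equation, after which the argument proceeds as you indicate.
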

\begin{proof}
By definition of $\lVert\lvert\cdotp\rvert\rVert_{h}$, we have
\begin{align*}
\lVert\lvert q-q_{h}\rvert\rVert_{h}^{2} & = \lVert q-q_{h}\rVert_{h}^{2}+\lVert q-q_{h}\rVert^{2}\\
&=\lVert q-q_{h}\rVert_{2,h}^{2}+ \sum_{e \in \mathcal{E}_{h}}\frac{\sigma}{\lvert e\rvert}\left\lVert\sjump{\partial(q- q_{h})/\partial n} \right\rVert_{e}^{2},
\end{align*}
and using the argument that for $q\in Q, \left\lVert\sjump{\partial q/\partial n} \right\rVert_{e}=0$, we arrive at
\begin{equation}\label{8.7}
\lVert\lvert q-q_{h}\rvert\rVert_{h}^{2}=\lVert q-q_{h}\rVert_{2,h}^{2}+ \sum_{e \in \mathcal{E}_{h}}\frac{\sigma}{\lvert e\rvert}\left\lVert\sjump{\partial q_{h}/\partial n} \right\rVert_{e}^{2}.
\end{equation}
Proceeding in the above manner, we find that
\begin{equation}\label{8.8}
\lVert\lvert u-u_{h}\rvert\rVert_{h}^{2}=\lVert u-u_{h}\rVert_{2,h}^{2}+ \sum_{e \in \mathcal{E}_{h}}\frac{\sigma}{\lvert e\rvert}\left\lVert\sjump{\partial u_{h}/\partial n} \right\rVert_{e}^{2}
\end{equation}
and
\begin{equation}\label{8.9}
\lVert\lvert \phi-\phi_{h}\rvert\rVert_{h}^{2}=\lVert \phi-\phi_{h}\rVert_{2,h}^{2}+ \sum_{e \in \mathcal{E}_{h}}\frac{\sigma}{\lvert e\rvert}\left\lVert\sjump{\partial \phi_{h}/\partial n} \right\rVert_{e}^{2}.
\end{equation}
From (\ref{8.7}),(\ref{8.8}) and (\ref{8.9}), it is enough to estimate
\begin{equation*}
\lVert q-q_{h}\rVert_{2,h}+\lVert u-u_{h}\rVert_{2,h}+\lVert\phi-\phi_{h}\rVert_{2,h}.
\end{equation*}
From \eqref{2:3}-\eqref{2:5} along with \eqref{8.1}-\eqref{8.4}, we get the following error equations:
\begin{align}
 u-u(q_{a})&=u_{f}-u_{f}(q_{a})+q-q_{a}, \label{8.10}\\
 a(u_{f}-u_{f}(q_{a}),v)&=a(q_{a}-q,v) \quad \forall \:v\in V, \label{8.11}\\
 a(v,\phi-\phi_{a})&=(u-u_{h},v) \quad \forall \, v\in V, \label{8.12}\\
 \alpha \;a(q-q_{a},p)+(q-q_{a},p)&=\tilde{a}_{h}(p, \phi-\phi_{h})+(u_{f}^{h} - u_{f},p)\quad \forall \, p\in Q .\label{8.13}
\end{align}
On putting $v=\phi-\phi_{a}$ in (\ref{8.11}) and $v=u_{f}-u_{f}(q_{a})$ in (\ref{8.12}), we arrive at
\begin{equation}\label{8.14}
 a(q_{a}-q,\phi-\phi_{a}) = (u-u_{h}, u_{f} - u_{f}(q_{a})).
\end{equation}
Again, taking $ p = q-q_{a}$ in (\ref{8.13}) with a use of (\ref{8.14}) yields
\begin{align}
\alpha \;a (q-q_{a}, q-q_{a}) &= \tilde{a}_{h}(q-q_{a}, \phi-\phi_{h})+(u_{f}^{h}-u_{f},q-q_{a})-(q-q_{a}, q-q_{a})\notag\\
& =  a(q-q_{a}, \phi-\phi_{a})+\tilde{a}_{h}(q-q_{a}, \phi_{a}-\phi_{h})+(u_{f}^{h}-u_{f},q-q_{a})\notag\\
 &\quad-(q-q_{a}, q-q_{a})+ a(q_{a}-q, \phi-\phi_{a})-(u-u_{h},u_{f}-u_{f}(q_{a}))\notag\\
& = \tilde{a}_{h}(q-q_{a},\phi_{a}-\phi_{h})+(u_{f}^{h}-u_{f},q-q_{a})+(q-q_{a}, q_{a}-q)\notag\\
&\quad + (u_{h}-u,u_{f}-u_{f}(q_{a})).\label{8.15}
\end{align}
Since, $u=u_{f}+q $ and $u(q_{a})=u_{f}(q_{a})+q_{a}$, we arrive at
\begin{equation}\label{8.16}
 u_{f} - u_{f}(q_{a}) = u - q - u(q_{a}) + q_{a},
\end{equation}
hence, using (\ref{8.15}) and (\ref{8.16}), we find that
\begin{align*}
 \alpha \; a(q-q_{a}, q-q_{a}) &= \tilde{a}_{h} (q-q_{a}, \phi_{a}-\phi_{h})+(u_{f}^{h}-u_{f},q-q_{a})\\
&+(q-q_{a}, q_{a}-q)+(u_{h}-u,u-u(q_{a}))+(u_{h}-u,q_{a}-q).
\end{align*}
Since, $ u_{h}=u_{f}^{h}+q_{h}$ yields
\begin{align*}
\alpha \;a(q-q_{a}, q-q_{a}) &= \tilde{a}_{h} (q-q_{a}, \phi_{a}-\phi_{h})+(u_{f}^{h}-u_{f}-u_{h}+u ,q-q_{a})\\
&\quad+(q-q_{a},q_{a}-q)+(u_{h}-u,u-u(q_{a}))\\
&=\tilde{a}_{h} (q-q_{a}, \phi_{a}-\phi_{h})+(q-q_{h},q-q_{a})+(q-q_{a},q_{a}-q)\\
&\quad+(u_{h}-u(q_{a}),u-u(q_{a}))+(u(q_{a})-u, u-u(q_{a}))\\
&=\tilde{a}_{h} (q-q_{a}, \phi_{a}-\phi_{h})-\lVert u(q_{a})-u\rVert^{2}+(q-q_{a},q_{a}-q_{h})\\
&\quad+(u_{h}-u(q_{a}),u-u(q_{a})).
\end{align*}
Using Cauchy-Schwarz with a use of Young's inequality and for a  small $\delta>0$, we find that
\begin{align}
\alpha \lvert q-q_{a}\rvert_{2,h}^{2} + \lVert u-u(q_{a})\rVert^{2} &\leq C(\delta)\lvert q-q_{a}\rvert_{2,h}^{2} + \frac{C}{\delta}\lvert\phi_{a} - \phi_{h}\rvert_{2,h}^{2} + C(\delta)\lVert q-q_{a}\rVert^{2}\notag\\
&\quad + \frac{C}{\delta}\lVert q_{a}-q_{h}\rVert^{2} + C(\delta)\lVert u-u(q_{a})\rVert^{2} + \frac{C}{\delta}\lVert u_{h}-u(q_{a})\rVert^{2}.\label{8.17}
\end{align}
Now, on putting $v=u_{f}-u_{f}(q_{a})$ in (\ref{8.11}) yields
\begin{equation*}
a ( u_{f} -u_{f}(q_{f}),u_{f} -u_{f}(q_{a})) = a(q_{a} - q, u_{f} - u_{f}(q_{a})),
\end{equation*}
and hence, using Poincar\'e Friedrich's inequality with a use of Cauchy-Schwarz inequality, we find
\begin{equation}\label{8.18}
\lVert u_{f}-u_{f}(q_{a})\rVert \leq C \lvert u_{f} - u_{f}(q_{a})\rvert_{2,h} \leq C\lvert q_{a}-q\rvert_{2,h}.
\end{equation}
Using definition of $\lVert\cdotp\rVert_{2,h}$, 
\begin{equation*}
\lVert q-q_{a}\rVert_{2,h}^{2} = \lvert q-q_{a}\rvert_{2,h}^{2} + \lVert q-q_{a}\rVert^{2},
\end{equation*}
along with triangle inequality and (\ref{8.18}), we arrive at
\begin{align}
\lVert q-q_{a}\rVert^{2} & \leq \lVert u -  u(q_{a})\rVert^{2}+\lVert u_{f}-u_{f}(q_{a})\rVert^{2}\notag\\
&\leq C \left( \lVert u - u(q_{a})\rVert^{2}+\lVert q_{a} -q\rVert_{2,h}^{2} \right),\notag
\end{align}
and hence
\begin{equation}\label{8.19}
\lVert q-q_{a}\rVert_{2,h}^{2}\leq C \left( \lVert u-u (q_{a})\rVert^2 + \lvert q-q_{a}\rvert_{2,h}^{2} \right).
\end{equation}
On subtracting (\ref{8.6}) from (\ref{8.2}) and putting $v= u_{f}^{a} - u_{f} (q_{a}) \in V$, we arrive at
\begin{equation*}
a(u_{f}^{a}-u_{f}(q_{a}) , u_{f}^{a}-u_{f}(q_{a}) = \tilde{a}_{h}(q_{a}-q_{h},u_{f}^{a}-u_{f} (q_{a})).
\end{equation*}
A use of Poincar\'e-Friedrich's and Cauchy-Schwarz inequality yields
\begin{equation}\label{8.20}
\lVert u_{f}^{a}-u_{f}(q_{a})\rVert \leq C \lvert u_{f}^{a}-u_{f}(q_{a})\rvert_{2,h} \leq C \lvert q_{a} -q_{h}\rvert_{2,h}.
\end{equation}
Using triangle inequality and \eqref{8.20}, we find
\begin{align}
\lVert u_{h} - u(q_{a})\rVert^{2} &\leq \lVert u_{f}^{h} - u_{f}(q_{a})\rVert^{2} + \lVert q_{a}-q_{h}\rVert^{2}\notag\\
& \leq \lVert u_{f}^{h} - u_{f}^{a}\rVert^{2} + \lVert u_{f}^{a} - u_{f}(q_{a})\rVert^{2}+ \lVert q_{a}-q_{h}\rVert^{2}\notag\\
&= \lVert u_{a} - u_{h}\rVert^{2} + \lVert u_{f}^{a} - u_{f}(q_{a})\rVert^{2}+ \lVert q_{a}-q_{h}\rVert^{2}\notag\\
& \leq C\left(\lVert u_{a} - u_{h}\rVert^{2}+\lVert q_{a}-q_{h}\rVert^{2}_{2,h} \right).\label{8.21}
\end{align}
Now, taking $ \delta \longrightarrow 0 $ and using (\ref{8.17}), (\ref{8.19}) alongwith (\ref{8.21}), we find
\begin{equation}\label{8.22}
\lVert q-q_{a}\rVert^{2}_{2,h} \leq C\left(\lVert q_{a} - q_{h}\rVert^{2}_{2,h}+\lVert\phi_{a}-\phi_{h}\rVert^{2}_{2,h}+\lVert u_{a}-u_{h}\rVert^{2}_{2,h} \right).
\end{equation}
Since,
$a(u_{f}-u_{f}^{a}, v) = \tilde{a}_{h}(q_{n}-q,v)\,\,\forall \; v \in V$,
by putting $v= u_{f} - u_{f}^{a}  \in V$  with a use of Cauchy-Schwarz inequality, we arrive at
\begin{equation*}
\lvert u_{f} - u_{f}^{a}\rvert_{2,h} \leq C\lvert q_{h} - q\rvert_{2,h}.
\end{equation*}
Using triangle inequality along with Poincar\'e inequality, we find that
\begin{align}
\lVert u - u_{a}\rVert_{2,h} &\leq \lVert u_{f} - u_{f}^{a}\rVert_{2,h} + \lVert q-q_{h}\rVert_{2,h}\notag\\
&\leq C\lVert q-q_{h}\rVert_{2,h} \leq C \left( \lVert q-q_{a}\rVert_{2,h} + \lVert q_{a} - q_{h}\rVert_{2,h} \right),
\end{align}
and hence, a use of (\ref{8.22}) yields
\begin{equation}\label{8.23}
\lVert u-u_{a}\rVert_{2,h}^{2} \leq C \left( \lVert q_{a}-q_{h}\rVert_{2,h}^{2}+\lVert\phi_{a}-\phi_{h}\rVert_{2,h}^{2}+\lVert u_{a}-u_{h}\rVert_{2,h}^{2} \right).
\end{equation}
Now, put $v= \phi -\phi_{a}$ in (\ref{8.12}) and using Cauchy-Schwarz along with Poincar\'e-Friedrich's inequality, we arrive at
\begin{align*}
\lvert\phi - \phi_{a}\rvert_{2,h} &\leq C\lVert u - u_{h}\rVert
 \leq C \left( \lVert u - u_{a}\rVert+\lVert u_{a}-u_{h}\rVert \right)\\
 &\leq C \left( \lVert u -u_{a}\rVert_{2,h}+\lVert u_{a}-u_{h}\rVert_{2,h} \right),
\end{align*}
and hence, using (\ref{8.23}), we find that
\begin{equation}\label{8.24}
\lVert\phi - \phi_{a}\rVert_{2,h}^{2} \leq C \left( \lVert q_{a} - q_{h}\rVert_{2,h}^{2}+\lVert\phi_{a} - \phi_{h}\rVert_{2,h}^{2}+\lVert u_{a} - u_{h}\rVert_{2,h}^{2}\right).
\end{equation}
Using triangle inequality along with (\ref{8.22})-(\ref{8.24}), we arrive at
\begin{align*}
\lVert u-u_{h}\rVert_{2,h}^{2}+\lVert\phi-\phi_{h}\rVert_{2,h}^{2}+\lVert q-q_{h}\rVert_{2,h}^{2}& \leq \lVert u-u_{a}\rVert_{2,h}^{2}+\lVert\phi-\phi_{a}\rVert_{2,h}^{2}+\lVert q-q_{a}\rVert_{2,h}^{2}\\
&\quad+\lVert u_{a}-u_{h}\rVert_{2,h}^{2}+\lVert\phi_{a}-\phi_{h}\rVert_{2,h}^{2}+\lVert q_{a}-q_{h}\rVert_{2,h}^{2},
\end{align*}
and hence using triangle inequality,
\begin{equation*}
\lVert u-u_h\rVert_{2,h}^{2} + \lVert\phi-\phi_{h}\rVert_{2,h}^{2}+\lVert q-q_{h}\rVert_{2,h}^{2} \leq C \left(\lVert u_{a}-u_{h}\rVert_{2,h}^{2}+\lVert\phi_{a}-\phi_{h}\rVert_{2,h}^{2}+\lVert q_{a}-q_{h}\rVert_{2,h}^{2} \right).
\end{equation*}
This completes a proof of the lemma.
\end{proof}
Now, we define the residuals. Volume residuals are defined as:
\begin{align*}
&\eta_{1,T}= h_{T}^{2} \lVert f \rVert_{T}, \hspace{30mm} \eta_{1}=\bigg{(}\sum_{T\in\mathcal{T}_{h}}\eta_{1,T}^{2} \bigg{)}^{\frac{1}{2}}, \\
&\eta_{2,T}= h_{T}^{2} \lVert u_{h} - u_{d} \rVert_{T}, \hspace{20mm} \eta_{2}=\bigg{(}\sum_{T\in\mathcal{T}_{h}}\eta_{2,T}^{2} \bigg{)}^{\frac{1}{2}}.
\end{align*}
Edge residuals are defined as:
\begin{align*}
&\eta_{3,e} = \lvert e \rvert^{\frac{1}{2}} \big{\lVert} \sjump{\Delta q_{h}}\big{\rVert}_{e}, \hspace{22mm} \eta_{3}=\bigg{(}\sum_{e\in\mathcal{E}^i_{h}}\eta_{3,e}^{2} \bigg{)}^{\frac{1}{2}}, \\
&\eta_{4,e} = \lvert e \rvert^{\frac{1}{2}} \big{\lVert} \sjump{\Delta u_{h}}\big{\rVert}_{e}, \hspace{22mm} \eta_{4}=\bigg{(}\sum_{e\in\mathcal{E}^i_{h}}\eta_{4,e}^{2} \bigg{)}^{\frac{1}{2}}, \\
&\eta_{5,e} = \lvert e \rvert^{\frac{1}{2}} \big{\lVert} \sjump{\Delta \phi_{h}}\big{\rVert}_{e}, \hspace{22mm} \eta_{5}=\bigg{(}\sum_{e\in\mathcal{E}^i_{h}}\eta_{5,e}^{2} \bigg{)}^{\frac{1}{2}}, \\
&\eta_{6,e} = |e|^{-\frac{1}{2}} \big{\lVert}\sjump{\partial q_{h}/\partial n}\big{\rVert}_{e}, \hspace{14mm} \eta_{6}=\bigg{(}\sum_{e\in\mathcal{E}_{h}}\eta_{6,e}^{2} \bigg{)}^{\frac{1}{2}}, \\
&\eta_{7,e} = |e|^{-\frac{1}{2}} \big{\lVert}\sjump{\partial u_{h}/\partial n}\big{\rVert}_{e}, \hspace{14mm} \eta_{7}=\bigg{(}\sum_{e\in\mathcal{E}_{h}}\eta_{7,e}^{2} \bigg{)}^{\frac{1}{2}}, \\
&\eta_{8,e} = |e|^{-\frac{1}{2}} \big{\lVert}\sjump{\partial \phi_{h}/\partial n}\big{\rVert}_{e}, \hspace{14mm} \eta_{8}=\bigg{(}\sum_{e\in\mathcal{E}_{h}}\eta_{8,e}^{2} \bigg{)}^{\frac{1}{2}}. 
\end{align*}
The total error estimator is defined by:
\begin{align*}
    \eta = \left( \eta_{1}^{2} + \eta_{2}^{2} + \eta_{3}^{2} + \eta_{4}^{2} + \eta_{5}^{2} + \eta_{6}^{2} + \eta_{7}^{2} + \eta_{8}^{2}\right)^{\frac{1}{2}}.
\end{align*}
\begin{theorem}\label{thm8.1}
 The following holds:
\begin{align*}
||| q-q_{h}|||_{h}+||| u-u_{h}|||_{h}+|||\phi-\phi_{h}|||_{h}\leq C\,\eta .
\end{align*}
\end{theorem}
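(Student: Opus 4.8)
The plan is to combine Lemma~\ref{lemma8.1} with the standard residual-based a-posteriori machinery for $C^0$-IP discretizations of fourth-order problems (as in \cite{Gudi2010NewAnalysis,BGS2010AC0IP}) applied to the three auxiliary problems \eqref{8.1}--\eqref{8.6}. By Lemma~\ref{lemma8.1}, it suffices to bound each of $\|q_a-q_h\|_{2,h}$, $\|u_a-u_h\|_{2,h}$, $\|\phi_a-\phi_h\|_{2,h}$ by $C\eta$, together with the three jump-sum terms $\sum_{e}(\sigma/|e|)^{1/2}(\|\sjump{\partial q_h/\partial n}\|_e+\|\sjump{\partial u_h/\partial n}\|_e+\|\sjump{\partial \phi_h/\partial n}\|_e)$, which are already controlled by $\eta_6+\eta_7+\eta_8$ directly from the definition of the residuals. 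So the crux is the three elliptic a-posteriori estimates.

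First I would treat the adjoint state: $\phi_a\in V$ solves $a(v,\phi_a)=(u_h-u_d,v)$ for all $v\in V$, while $\phi_h\in V_h$ solves the discrete equation \eqref{3:9}. This is exactly the $C^0$-IP approximation of a biharmonic problem with load $u_h-u_d$, so the residual estimate gives $\||\phi_a-\phi_h|\|_h \le C(\eta_2 + \eta_5 + \eta_8)$ — the volume residual $h_T^2\|u_h-u_d\|_T$, the interior edge jump $|e|^{1/2}\|\sjump{\Delta\phi_h}\|_e$, and the normal-derivative jump $|e|^{-1/2}\|\sjump{\partial\phi_h/\partial n}\|_e$ — by introducing the enriching operator $E_h$ of Lemma~\ref{lem:EnrichApprx}, splitting the error via $E_h\phi_h$, using Galerkin orthogonality against the conforming test space $\tilde Q_h$, integrating by parts element-wise to expose the residuals, and using a Clément/Scott--Zhang type quasi-interpolant together with the bounds of Lemma~\ref{lem:EnrichApprx}. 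The same argument applied to \eqref{8.2}/\eqref{3:8} gives $\||u_f^a-u_f^h|\|_h\le C(\eta_1+\eta_4+\eta_7)$ (load $f$ here), and since $u_a-u_h=(u_f^a-u_f^h)+(q_h-q_h)=u_f^a-u_f^h$ by \eqref{8.1} and the discrete relation $u_h=u_f^h+q_h$, we get $\|u_a-u_h\|_{2,h}\le C(\eta_1+\eta_4+\eta_7)$.

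Next I would handle the control. From \eqref{8.4}, $q_a\in Q$ satisfies $\alpha a(q_a,p)+(q_a,p)=\tilde a_h(p,\phi_h)-(u_f^h-u_d,p)$, and $q_h\in Q_h$ satisfies \eqref{3:10}. Subtracting the discrete equation suitably — after rewriting the right-hand side using $\tilde a_h(p,\phi_h)=a_h(\phi_h,p)$ on $Q_h$ and $u_h-u_d=u_f^h+q_h-u_d$ — exhibits $q_a-q_h$ as the solution of a coercive (by the $\alpha a(\cdot,\cdot)+(\cdot,\cdot)$ structure, which is coercive in $\|\cdot\|_{2,h}$) problem with a residual driven by $f$, $u_h-u_d$, $\sjump{\Delta q_h}$, $\sjump{\Delta\phi_h}$ and the normal-derivative jumps. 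Again using $E_h$, element-wise integration by parts and a quasi-interpolant, one obtains $\|q_a-q_h\|_{2,h}\le C(\eta_1+\eta_2+\eta_3+\eta_5+\eta_6+\eta_7+\eta_8)$. Assembling the three bounds into Lemma~\ref{lemma8.1} and collecting terms yields $\||q-q_h|\|_h+\||u-u_h|\|_h+\||\phi-\phi_h|\|_h\le C\eta$.

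The main obstacle I expect is the control estimate \eqref{8.4}: unlike the two state/adjoint equations, its right-hand side $\tilde a_h(p,\phi_h)$ is a \emph{broken} bilinear form paired against a discrete (hence generally nonconforming for $Q$) function $\phi_h$, so the standard ``$C^0$-IP residual'' bookkeeping must be carried out carefully — one cannot simply integrate $\Delta p\,\Delta\phi_h$ by parts globally, and the consistency/Galerkin-orthogonality argument has to be set up between $Q$ and $Q_h$ (not $V$, $V_h$) while tracking the extra $L_2$ zeroth-order term and the boundary contributions arising because $p\in Q$ only satisfies $\partial p/\partial n=0$, not $p=0$, on $\partial\Omega$. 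Handling these boundary terms is where Lemma~\ref{lemma:importance} (equality of the Laplacian and Hessian bilinear forms on $Q$) and the reduced-regularity structure of the problem enter; everything else is the routine enriching-operator-plus-quasi-interpolant computation, and local efficiency would follow afterwards by the usual bubble-function argument, which I would state separately.
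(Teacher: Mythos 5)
Your proposal follows essentially the same route as the paper: reduce via Lemma \ref{lemma8.1} to the three auxiliary reliability bounds, then establish each by the enriching-operator/quasi-interpolant/elementwise-integration-by-parts machinery, with the control equation treated through the coercive form $\alpha a(\cdot,\cdot)+(\cdot,\cdot)$ on $Q$ and the bookkeeping between $a_h$ and $\tilde a_h$. One small correction: the intermediate bound for $\|u_a-u_h\|_{2,h}$ cannot be $C(\eta_1+\eta_4+\eta_7)$ with ``load $f$'' alone, because the right-hand side of \eqref{8.2} also carries the term $-\tilde a_h(q_h,\cdot)$, whose elementwise integration by parts contributes the $q_h$-jump estimators $\eta_3$ and $\eta_6$ (as in the paper's Step 3); this does not affect the final conclusion since all estimators are absorbed into $\eta$.
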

\begin{proof}
Using the Lemma \ref{lemma8.1}, it is enough to estimate 
$\left(\lVert q_a-q_{h}\rVert_{2,h}+\lVert u_a-u_{h}\rVert_{2,h}+\lVert\phi_a-\phi_{h}\rVert_{2,h}\right)$.
Step 1) Using triangle inequality, we get
\begin{equation}\label{8.27}
\lVert q_{a}-q_{h}\rVert_{2,h} \leq \lVert q_{a}-E_{h}q_{h}\rVert_{2,h}+\lVert E_{h}q_{h}-q_{h}\rVert_{2,h}.
\end{equation}
Now, let
$\chi = q_{a}-E_{h}q_{h} \in Q$.
Consider
\begin{align*}
\alpha a(\chi,\chi)+(\chi,\chi) &= \alpha a(q_{a},\chi)-\alpha a(E_{h}q_{h},\chi)+(q_{a},\chi)-(E_{h} q_{h},\chi)\\
& = \tilde{a}_{h} (\chi,\phi_{h}) - (u_{f}^{h}-u_{d},\chi)+\alpha \tilde{a}_{h}(q_{h}-E_{h}q_{h},\chi)\\
&\quad-\alpha \tilde{a}_{h}(q_{h},\chi)+(q_{h}-E_{h}q_{h},\chi)-(q_{h},\chi)\\
& = \tilde{a}_{h}(\chi,\phi_{h}) - (u_{h}-u_{d},\chi)+\alpha \tilde{a}_{h}(q_{h}-E_{h}q_{h},\chi)\\
&\quad-\alpha \tilde{a}_{h}(q_{h},\chi)+(q_{h}-E_{h}q_{h},\chi),
\end{align*}
and hence,
\begin{align}
\alpha a (\chi,\chi)+(\chi,\chi)&= \tilde{a}_{h} (\chi,\phi_{h}) - (u_{h}-u_{d},\chi-I_{h} \chi))-(u_{h}-u_{d},I_{h} \chi)\notag\\
&\quad+ \alpha \tilde{a}_{h}(q_{h}-E_{h}q_{h},\chi)-\alpha \tilde{a}_{h}(q_{h},\chi-I_{h} \chi)-\alpha \tilde{a}_{h}(q_{h},I_{h}\chi)\notag\\
&\quad+(q_{h}-E_{h}q_{h},\chi),\label{8.28}
\end{align}
where $I_h$ is the quadratic Lagrange interpolation operator.
Now,
\begin{align*}
\alpha a_{h}(q_{h},I_{h} \chi) &= \alpha\tilde{a}_{h} (q_{h},I_{h} \chi)+\sum_{e\in\mathcal{E}_{h}}\int_{e}\smean{\Delta q_{h}}\sjump{\partial I_{h} \chi/\partial n}\,ds\\
&\quad +\sum_{e \in \mathcal{E}_{h}}\int_{e}\smean{\Delta I_{h}\chi}\sjump{\partial q_{h}/\partial n}\,ds + \sum_{e \in \mathcal{E}_{h}}\frac{\sigma}{\vert e\vert}\int_{e}\sjump{\partial q_{h}/\partial n_{e}}\sjump{\partial I_{h} \chi/\partial n}\,ds.
\end{align*}
Rewrite it as
\begin{align*}
\alpha a_{h}(q_{h},I_{h} \chi)= \alpha \tilde{a}_{h}(q_{h},I_{h} \chi) + \alpha b_{h}(q_{h},I_{h} \chi),
\end{align*}
where 
\begin{align*}
b_{h}(q_{h},I_{h}\chi)&=\sum_{e\in\mathcal{E}_{h}}\int_{e}\smean{\Delta q_{h}}\sjump{\partial I_{h} \chi/\partial n}\,ds +\sum_{e \in \mathcal{E}_{h}}\int_{e}\smean{\Delta I_{h}\chi}\sjump{\partial q_{h}/\partial n}\,ds\\
&\quad+ \sum_{e \in \mathcal{E}_{h}}\frac{\sigma}{\vert e\vert}\int_{e}\sjump{\partial q_{h}/\partial n}\sjump{\partial I_{h} \chi/\partial n}\,ds.
 \end{align*}
 Therefore using \eqref{3:10}, we have
\begin{align}
 \alpha\tilde{a}_{h}(q_{h},I_{h} \chi)& = \alpha a_{h}(q_{h},I_{h} \chi)- \alpha b_{h}(q_{h},I_{h} \chi)\notag\\
& = a_{h}(I_{h} \chi, \phi_{h})-(u_{h}-u_{d}, I_{h}\chi)- \alpha b_{h}(q_{h}, I_{h} \chi)\notag\\
 &= \tilde{a}_{h}(I_{h} \chi, \phi_{h}) + b_{h}(I_{h} \chi, \phi_{h})-(u_{h}-u_{d}, I_{h} \chi)- \alpha b_{h}(q_{h}, I_{h} \chi).\label{8.29}
\end{align}
Using (\ref{8.28}) and (\ref{8.29}), we find
\begin{multline}
\label{8.30}
\alpha a(\chi,\chi)+(\chi,\chi) = \tilde{a}_{h}(\chi-I_{h} \chi, \phi_{h})-b_{h}(I_{h} \chi, \phi_{h})-(u_{h}-u_{d}, \chi-I_{h} \chi)\\
+\alpha \tilde{a}_{h}(q_{h}-E_{h} q_{h}, \chi)- \alpha \tilde{a}_{h}(q_{h}, \chi-I_{h} \chi)+\alpha b_{h} (q_{h}, I_{h} \chi)+(q_{h} -E_{h} q_{h}, \chi).
\end{multline}
Now, we estimate each term on the right hand side of (\ref{8.30}). Using integration of parts, the first term of the right side of (\ref{8.30}) gives,
\begin{align*}
\tilde{a}_{h} ( \chi - I_{h} \chi, \phi_{h}) &= \sum_{T\in \cT_h} \int_{T} \Delta( \chi - I_{h} \chi) \Delta  \phi_{h} \: dx\\
 &= - \sum_{T\in \cT_h} \int_{T} \nabla(\chi - I_{h}\chi)\cdot \nabla(\Delta \phi_{h})\,dx+ \sum_{T\in \cT_h} \int_{\partial T} \Delta \phi_{h} \frac{\partial(\chi - I_{h}\chi)}{\partial n}\,ds.
\end{align*}
Since, $\phi_{h}|_{T}\in P_{2}(T)\; \forall \; T\in \mathcal{T}_{h}$ and using the fact that $\chi \in Q$, we find that
\begin{align*}
\tilde{a}_{h}( \chi - I_{h}\chi, \phi_{h})& =  \sum_{T\in \cT_h} \int_{\partial T} \Delta \phi_{h} \frac{\partial ( \chi - I_{h}\chi) }{\partial n}\,ds\\
&=-\sum_{e\in \mathcal{E}_{h}} \int_{e} \smean{\Delta \phi_{h}}\sjump{\partial (\chi - I_{h}\chi)/\partial n}\,ds \\
&\quad- \sum_{e\in \mathcal{E}^i_{h}} \int_{e}  \smean{\partial(\chi - I_{h} \chi)/\partial n}\sjump{\Delta \phi_{h}}\,ds\\
&=\sum_{e\in \mathcal{E}_{h}} \int_{e} \smean{\Delta \phi_{h}} \sjump{\partial I_{h} \chi/\partial n}\,ds-\sum_{e\in \mathcal{E}^i_{h}} \int_{e} \smean{\partial (\chi - I_{h}\chi/\partial n} \sjump{\Delta \phi_{h}}\,ds.  
\end{align*}
Therefore,
\begin{align*}
\tilde{a}_{h}(\chi - I_{h}\chi, \phi_{h})- b_{h}(I_{h}\chi, \phi_{h})&= \sum_{e\in \mathcal{E}_{h}} \int_{e} \smean{\Delta \phi_{h}} \sjump{\partial I_{h}\chi/\partial n}\,ds- \sum_{e\in\mathcal{E}^i_{h}} \int_{e} \smean{\partial (\chi - I_{h}\chi/\partial n} \sjump{\Delta \phi_{h}}\,ds\\
&\quad- \sum_{e\in\mathcal{E}_{h}} \int_{e} \smean{\Delta I_{h} \chi} \sjump{\partial \phi_{h}/\partial n}\,ds - \sum_{e\in \mathcal{E}_{h}} \int_{e}\smean{\Delta \phi_{h}}\sjump{\partial I_{h} \chi/\partial n}\,ds\\
&\quad- \sum_{e\in\mathcal{E}_{h}} \frac{\sigma}{|e|} \int_{e} \sjump{\partial I_{h}\chi/\partial n} \sjump{\partial \phi_{h}/\partial n}\,ds\\
&= - \sum_{e\in\mathcal{E}^i_{h}} \int_{e} \smean{\partial (\chi - I_{h} \chi)/\partial n}\sjump{\Delta \phi_{h}}\,ds - \sum_{e\in \mathcal{E}_{h}} \int_{e}\smean{\Delta I_{h} \chi} \sjump{\partial \phi_{h}/\partial n}\,ds\\
&\quad+ \sum_{e\in\mathcal{E}_{h}} \frac{\sigma}{|e|} \int_{e}\sjump{\partial I_{h}\chi/\partial n}\sjump{\partial \phi_{h}/\partial n}\,ds.
\end{align*}
A use of Cauchy-Schwarz inequality along with interpolation estimates, trace and inverse inequalities yield
\begin{align*}
\tilde{a}_{h}( \chi - I_{h}\chi, \phi_{h})- b_{h}(I_{h}\chi, \phi_{h}) &\leq \bigg{(} \sum_{e\in \mathcal{E}^i_{h}} |e|^{-1} \|\sjump{\partial (\chi - I_{h} \chi)/\partial n}\|_{e}^{2} \bigg{)}^{1/2} \: \bigg{(} \sum_{e\in \mathcal{E}^i_{h}} |e| \lVert \sjump{\Delta \phi_{h}}\rVert_{e}^{2} \bigg{)}^{1/2}\\
&\quad+ \bigg{(} \sum_{e\in \mathcal{E}_{h}} |e| \|\smean{\Delta I_{h} \chi}\|_{e}^{2} \bigg{)}^{1/2} \: \bigg{(} \sum_{e\in \mathcal{E}_{h}} |e|^{-1} \|\sjump{\partial \phi_{h}/\partial n}\|_{e}^{2} \bigg{)}^{1/2}\\
&\quad+ \bigg{(} \sum_{e\in \mathcal{E}_{h}} |e|^{-1} \|\sjump{\partial (I_{h} \chi)/\partial n}\|_{e}^{2} \bigg{)}^{1/2} \: \bigg{(} \sum_{e\in \mathcal{E}_{h}} \frac{\sigma^{2}}{|e|} \|\sjump{\partial \phi_{h}/\partial n}\|_{e}^{2} \bigg{)}^{1/2}\\
&\leq C\,|\chi|_{2,h} \bigg{[}  \bigg{(} \sum_{e\in \mathcal{E}^i_{h}} |e|\|\sjump{\Delta \phi_{h}}\|_{e}^{2} \bigg{)}^{1/2} + \bigg{(} \sum_{e\in \mathcal{E}_{h}} |e|^{-1} \|\sjump{\partial \phi_{h}/\partial n}\|_{e}^{2} \bigg{)}^{1/2}\bigg{]}.
\end{align*}
Now, using Cauchy-Schwarz inequality along with interpolation estimates in the third, fourth and the last term in the right hand side of (\ref{8.30}) gives
\begin{align*}
\alpha a_{h}(q_{h} - E_{h} q_{h}, \chi) - (u_{h}-u_{d}, \chi - I_{h} \chi) &+(q_{h} - E_{h} q_{h}, \chi)
\leq \bigg{(} \sum_{T\in \cT_h} h_{T}^{4} \lVert u_{h}-u_{d}\rVert_{T}^{2} \bigg{)}^{1/2} \lvert\chi\rvert_{2,h}\\
&\quad+ \bigg{(} \sum_{e\in \mathcal{E}_{h}} \frac{1}{|e|} \|\sjump{\partial q_{h}/\partial n}\|_{e}^{2} \bigg{)}^{1/2} \bigg(h^{2}\,\lVert\chi\rVert+|\chi|_{2,h}\bigg) .
\end{align*}
Consider the fifth and sixth terms in the right hand side of (\ref{8.30}), using integration by parts, we arrive at 
\begin{align*}
 - \alpha\tilde{a}_{h} (q_{h},\chi - I_{h}\chi)& + \alpha b_{h} (q_{h},I_{h}\chi)= - \alpha \sum_{T\in \cT_h} \int_{T} \Delta q_{h} (\chi - I_{h} \chi)\,dx \\
 &\quad+ \alpha \sum_{e \in \mathcal{E}_{h}} \int_{e} \smean{\Delta q_{h}} \sjump{\partial  I_{h} \chi/\partial n}\,ds + \alpha\sum_{e \in \mathcal{E}_{h}} \int_{e} \smean{\Delta I_{h} \chi}\sjump{\partial q_{h}/\partial n}\,ds\\
&\quad+\alpha\sum_{e \in  \mathcal{E}_{h}} \frac{\sigma}{|e|} \int_{e} \sjump{\partial  q_{h}/\partial n}\sjump{\partial I_{h} \chi/\partial n}\,ds\\
 &= \alpha \sum_{e \in \mathcal{E}_{h}} \int_{e} \smean {\Delta q_{h}} \sjump{\partial (\chi - I_{h} \chi)/\partial n}\,ds+ \alpha \sum_{e \in \mathcal{E}_{h}^{i}} \int_{e} \smean{\partial (\chi - I_{h} \chi)/\partial n}\sjump{\Delta q_{h}}\,ds\\
 &\quad+ \alpha \sum_{e \in \mathcal{E}_{h}} \int_{e} \smean{\Delta q_{h}} \sjump{\partial I_{h} \chi/\partial n}\,ds+ \alpha \sum_{e \in \mathcal{E}_{h}} \int_{e} \smean{\Delta I_{h} \chi}\sjump{\partial  q_{h}/\partial n}\,ds\\
& \quad+ \alpha \sum_{e \in \mathcal{E}_{h}} \frac{\sigma}{|e|} \int_{e} \sjump{\partial q_{h}/\partial n}\sjump{\partial I_{h} \chi/\partial n}\,ds.
\end{align*}
Using Cauchy- Schwarz along with trace and inverse inequalities, we arrive at
\begin{align*}
- \alpha \tilde{a}_{h} (q_{h},\chi - I_{h}\chi) + \alpha b_{h} (q_{h},I_{h}\chi)&\leq C\,|\chi|_{2,h} \Bigg{[}  \bigg{(} \sum_{e  \in \mathcal{E}^i_{h}} |e| \|\sjump{\Delta q_{h}}\|_{e}^{2} \bigg{)}^{1/2} \\
&\quad+  \bigg{(} \sum_{e  \in \mathcal{E}_{h}} |e|^{-1} \|\sjump{\partial q_{h}/\partial n}\|_{e}^{2} \bigg{)}^{1/2}\Bigg{]}.
\end{align*}
 Combining all the terms of (\ref{8.30}) with a use of Young's inequality, we find that
\begin{align*}
\alpha |\chi|_{2,h}^{2} + \lVert\chi\rVert^{2} &\leq C\delta |\chi|_{2,h}^{2} + C\delta h^{2}\lVert\chi\rVert^{2} + \frac{C}{\delta} \sum_{e\in \mathcal{E}^i_{h}} |e| \|\sjump{\Delta q_{h}}\|_{e}^{2}\\
&+ \frac{C}{\delta} \sum_{e\in  \mathcal{E}_{h}} |e|\|\sjump{\Delta \phi_{h}}\|_{e}^{2} +\frac{C}{\delta} \sum_{e\in \mathcal{E}_{h}^{i}} |e|^{-1}\|\sjump{\partial q_{h}/\partial n}\|_{e}^{2}\\
&+ \frac{C}{\delta} \sum_{e\in \mathcal{E}_{h}} |e|^{-1} \|\sjump{\partial \phi_{h}/\partial n}\|_{e}^{2}+\frac{C}{\delta} \sum_{T\in \cT_h} h_{T}^{4} \|u_{h} - u_{d}\|_{T}^{2},
\end{align*}
and hence, for  $\delta \rightarrow 0$,
\begin{align*}
\|\chi\|_{2,h} &\leq C \bigg{[} \sum_{e\in \mathcal{E}_{h}^{i}} |e|^{1/2} \|\sjump{\Delta q_{h}}\|_{e} + \sum_{e\in \mathcal{E}^i_{h}} |e|^{1/2} \|\sjump{\Delta \phi_{h}}\|_{e}  + \sum_{e\in \mathcal{E}_{h}} |e|^{-1/2} \|\sjump{\partial q_{h}/\partial n}\|_{e}\\
&\quad + \sum_{e\in \mathcal{E}_{h}} |e|^{-1/2} \|\sjump{\partial \phi_{h}/\partial n}\|_{e} + \sum_{T\in\cT_{h}} h_{T}^{2} \|u_{h} - u_{d}\|_{T} \bigg{]}.
\end{align*}
Using Lemma 2.1 with a use of (\ref{8.27}), we arrive at
\begin{equation*}
||q_{a}-q_{h}||_{2,h} \leq C\:(\eta_{2}+\eta_{3}+\eta_{5}+\eta_{6}+ \eta_{8}).
\end{equation*}
Step 2: Following the same arguments as in \cite{BGS2010AC0IP}, we find 
\begin{equation*}
\|\phi_{a}-\phi_{h}\|_{2,h} \leq C(\eta_{2}+\eta_{5}+\eta_{8}).
\end{equation*}
Step 3: Now, we need to find the estimate of $\|u_{a} - u_{h}\|_{2,h}$. Since, $ u_{a} - u_{h} = u_{f}^{a} - u_{f}^{h}$, a use of triangle inequality yields
\begin{equation*}
\|u_{a} - u_{h}\|_{2,h} = \|u_{f}^{a} - u_{f}^{h}\|_{2,h} \leq \|u_{f}^{a} - \tilde{E}_{h} u_{f}^{h}\|_{2,h} +\|\tilde{E}_{h} u_{f}^{h} -  u_{f}^{h}\|_{2,h}.
\end{equation*}
Let $\tau = u_{f}^{a} - \tilde{E}_{h} u_{f}^{h} \in  V$ where $\tilde{E}_{h}:V_h \rightarrow V \: \cap \: W_{h}$ is an enriching map obtained from $E_h$ by imposing the boundary conditions, we arrive at
\begin{align}
a(\tau, \tau)& = a (u_{f}^{a} ,\tau) - a (\tilde{E}_{h}u_{f}^{h}, \tau)\notag\\
&= (f,\tau) - \tilde{a}_{h}(q_{h}, \tau) + \tilde{a}_{h}(u_{f}^{h} - \tilde{E}_{h}u_{f}^{h}, \tau) - \tilde{a}_{h}(u_{f}^{h}, \tau)\notag\\
& = (f,\tau - I_{h} \tau) + (f, I_{h} \tau) - \tilde{a}_{h}(q_{h}, \tau - I_{h} \tau) - \tilde{a}_{h}(q_{h}, I_{h}\tau)\notag\\
&\quad + \tilde{a}_{h}(u_{f}^{h} - \tilde{E_{h}}u_{f}^{h}, \tau) - \tilde{a}_{h}(u_{f}^{h}, \tau - I_{h} \tau) - \tilde{a}_{h}(u_{f}^{h}, I_{h} \tau).\label{8.31}
\end{align}
Since, $I_{h}\tau \in V_{h}$, we have
\begin{align}\label{8.32}
\tilde{a}_{h}(q_{h}, I_{h}\tau) = a_{h}(q_{h}, I_{h}\tau) - b_{h}(q_{h}, I_{h} \tau) = - b_{h}(q_{h}, I_{h} \tau).
\end{align}
Also,
\begin{align}
\tilde{a}_{h}(u_{f}^{h}, I_{h}\tau)& = a_{h}(u_{f}^{h}, I_{h}\tau) - b_{h}(u_{f}^{h}, I_{h} \tau)\notag\\
& = (f, I_{h} \tau) - b_{h}(u_{f}^{h}, I_{h} \tau).\label{8.33}
\end{align}
Using (\ref{8.31})-(\ref{8.33}), we find
\begin{align}
a(\tau, \tau) &= (f,\tau - I_{h} \tau) + \tilde{a}_{h}(q_{h}, \tau -  I_{h} \tau)+b_{h}(q_{h}, I_{h} \tau) \notag\\
&\quad + \tilde{a}_{h}(u_{f}^{h} - \tilde{E_{h}}u_{f}^{h}, \tau) - \tilde{a}_{h}(u_{f}^{h}, \tau - I_{h} \tau) + b_{h}(u_{f}^{h}, I_{h}\tau).\label{8.34}
\end{align}
Now, we find the estimates of each term on the right hand side of (\ref{8.34}).
Using the Cauchy-Schwarz inequality along with some interpolation estimates and Lemma 2.1, the first and fourth term of (\ref{8.34}) yield
\begin{align*}
(f,\tau - I_{h}\tau) + \tilde{a}_{h} (u_{f}^{h} - \tilde{E_{h}}u_{f}^{h}, \tau) &\leq C \Bigg{(} \sum_{T\in \cT_h} h_{T}^{2} \|f\|_{T}+ \sum_{e \in \mathcal{E}_{h}} \frac{1}{|e|^{1/2}} \|\sjump{\partial u_{h}^{f}/\partial n}\|_{e} \Bigg{)} \: |\tau|_{2,h}.
\end{align*}
Since, $u_{f}^{h} = u_{h} - q_{h}$, we have $$\frac{\partial u_{f}^{h}}{\partial n} = \frac{\partial u_{h}}{\partial n} -\frac{\partial q_{h}}{\partial n}.$$  
Therefore,
\begin{equation*}
 (f, \tau -  I_{h} \tau) + \tilde{a}_{h}(u_{f}^{h} - \tilde{E_{h}}u_{f}^{h}, \tau) \leq C( \eta_{1} + \eta_{6} + \eta_{7} ) |\tau|_{2,h}.
\end{equation*}
Consider the last two terms of (\ref{8.34}). A use of integration by parts along with the fact that $u_{f}^{h} \in P_{2}(T) \quad \forall ~~ T \in \mathcal{T}_{h}$, yield
\begin{align*}
 - \tilde{a}_{h}(q_{h} ,\tau -  I_{h} \tau)& + b_{h}(q_{h}, I_{h} \tau)
 = - \sum_{T\in \cT_h} \int_{T} \Delta u_{f}^{h} \,\Delta(\tau - I_{h} \tau)\,dx + \sum_{e \in \mathcal{E}_{h}} \int_{e} \smean{\Delta u_{f}^{h}} \sjump{\partial I_{h} \tau/\partial n}\,ds\\
& \quad+ \sum_{e  \in \mathcal{E}_{h}} \int_{e} \smean{\Delta I_{h} \tau} \sjump{\partial u_{f}^{h}/\partial n}\,ds + \sum_{e \in  \mathcal{E}_{h}} \frac{\sigma}{|e|} \int_{e}\sjump{\partial u_{f}^{h}/\partial n}\sjump{\partial I_{h} \tau/\partial n}\,ds \\
&= \sum_{e \in \mathcal{E}_{h}} \int_{e} \smean{\Delta u_{f}^{h}}\sjump{\partial (\tau -  I_{h} \tau)/\partial n}\,ds + \sum_{e \in \mathcal{E}^i_{h}} \int_{e}  \smean{\partial (\tau -  I_{h} \tau)/\partial n}\sjump{\Delta u_{f}^{h}}\,ds\\
&\quad+\sum_{e \in\mathcal{E}_{h}} \int_{e} \smean{\Delta u_{f}^{h}} \sjump{\partial (I_{h} \tau)/\partial n}\,ds + \sum_{e \in \mathcal{E}_{h}} \int_{e}\smean{\Delta I_{h} \tau}\sjump{\partial u_{f}^{h}/\partial n}\,ds\\
&\quad+\sum_{e \in \mathcal{E}_{h}} \frac{\sigma}{|e|} \int_{e}\sjump{\partial u_{f}^{h}/\partial n}\sjump{\partial (I_{h} \tau - \tau)/\partial n}\,ds.
\end{align*}
Using Cauchy-Schwarz inequality, inverse and trace inequality along with interpolation estimates, we arrive at
\begin{equation*}
- \tilde{a}_{h} \: (q_{h} ,\tau -  \Pi_{h} \: \tau) + b_{h}\: (q_{h}, \Pi_{h} \: \tau)\leq C ( \eta_{3} + \eta_{4}+ \eta_{6}+ \eta_{7})|\tau|_{2,h},
\end{equation*}
and hence combining all the above estimates along with Young's inequality and (\ref{8.34}), we find
\begin{equation*}
\|u_{a} - u_{h}\|_{2,h} \leq C( \eta_{1 }+ \eta_{3} + \eta_{4} + \eta_{6} + \eta_{7}).
\end{equation*}
Therefore, on combining all the three steps along with Lemma \ref{lemma8.1}, we get
\begin{equation*}
|||q-q_{h}|||_{h} + ||| u- u_{h}|||_{h} + |||\phi-\phi_{h}|||_{h}  \leq C \eta
\end{equation*}
which completes the proof.
\end{proof}
For any function $g\in L_2(\O)$, let $\bar{g}$ to be the $L_{2}$-projection of $g$ into the space of piece-wise constant functions with respect to $\mathcal{T}_{h}$. Its restriction to each triangle is defined by:
\begin{align*}
    \bar{g}|_{T} = \frac{1}{|T|} \int_{T} g\,dx \quad \forall ~~T \in \mathcal{T}_{h}.
\end{align*}
For any edge $e\in \cE_h^i$, set $\cT_e$ to be the union of all triangles which share an edge $e$.
In the following theorem, we prove the local efficiency estimates.
\begin{theorem}\label{thm8.2}
There hold:
\begin{align}
&\eta_{1,T} \leq C \bigg{(} h_{T}^{2} \|f-\bar{f}\|_{T} + \|\Delta(u-u_{h})\|_{T}\bigg{)},\label{8.35} \\
& \eta_{2,T} \leq C \bigg{(} h_{T}^{2} \|u-\bar{u}_{d}\|_{T} + h_{T}^{2}\|u-u_{h}\|_{T}+ \|\Delta(\phi-\phi_{h})\|_{T}\bigg{)}, \label{8.36}\\
& \eta_{3,e} \leq C \sum_{T \in \mathcal{T}_{e}} ||\Delta(q-q_{h})||_{T},\label{8.37}\\ 
& \eta_{4,e} \leq C \sum_{T \in \mathcal{T}_{e}} \bigg{(} \|\Delta(u-u_{h})\|_{T} + h_{T}^{2} \|f\|_{T} \bigg{)},\label{8.38}\\
& \eta_{5,e} \leq C \sum_{T \in \mathcal{T}_{e}} \bigg{(} \|\Delta(\phi-\phi_{h})\|_{T} + h_{T}^{2} \|u-u_{h}\|_{T}+ h_{T}^{2} \|u_{d}-\bar{u}_{d}\|_{T} \bigg{)},\label{8.39}\\
 &\eta_{6,e}  \leq C  \|q-q_{h}\|_{h},\label{8.40}\\
& \eta_{7,e}  \leq C  \|u-u_{h}\|_{h},\label{8.41}\\
& \eta_{8,e} \leq C  \|\phi-\phi_{h}\|_{h}.\label{8.42}
\end{align}
\end{theorem}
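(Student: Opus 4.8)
The plan is to establish all eight estimates by the classical residual/bubble-function technique, adapted to the fourth order $C^{0}$-IP framework as in \cite{BGS2010AC0IP,BSung2005DG4}. Two structural ingredients are used throughout. First, the regularity available from Section~\ref{error analysis:L^{2}} (applied also to $u=u_{f}+q$, using clamped plate regularity for $u_{f}$): the relations $\Delta^{2}u=f$, $\Delta^{2}\phi=u-u_{d}$, $\Delta^{2}q=0$ hold a.\,e.\ in $\O$, while $\Delta u,\Delta\phi,\Delta q\in H^{1}(\O)$ and $\nabla(\Delta u),\nabla(\Delta\phi),\nabla(\Delta q)\in H(div,\O)$, so that $\sjump{\Delta u}=\sjump{\Delta\phi}=\sjump{\Delta q}=0$ and the normal components of $\nabla(\Delta u),\nabla(\Delta\phi),\nabla(\Delta q)$ are single-valued across every interior edge. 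Second, $q_{h},u_{h},\phi_{h}$ are piecewise quadratic, hence their Laplacians are piecewise constant; consequently $\Delta^{2}q_{h}=\Delta^{2}u_{h}=\Delta^{2}\phi_{h}=0$ and $\p(\Delta q_{h})/\p n=\p(\Delta u_{h})/\p n=\p(\Delta\phi_{h})/\p n=0$ on each $T\in\cT_{h}$. With these, the normal-jump estimates \eqref{8.40}--\eqref{8.42} need no bubble functions: since $q,u\in Q$ and $\phi\in V$ one has $\sjump{\p q/\p n}=\sjump{\p u/\p n}=\sjump{\p\phi/\p n}=0$ on every $e\in\cE_{h}$, so $\sjump{\p q_{h}/\p n}=\sjump{\p(q_{h}-q)/\p n}$, and, since $\sigma\ge1$, $\eta_{6,e}^{2}=|e|^{-1}\|\sjump{\p(q-q_{h})/\p n}\|_{e}^{2}\le\frac{\sigma}{|e|}\|\sjump{\p(q-q_{h})/\p n}\|_{e}^{2}\le\|q-q_{h}\|_{h}^{2}$, and the same for $\eta_{7,e}$ and $\eta_{8,e}$.

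Next I would treat the volume residuals \eqref{8.35}--\eqref{8.36} with the standard cubic interior bubble $b_{T}$ on $T$ (scaled so $0\le b_{T}\le1$), testing with $w_{T}=\bar f\,b_{T}^{2}$, which lies in $H^{2}_{0}(T)\subset V$ after extension by zero. From $\Delta^{2}u=f$ and two integrations by parts on $T$ --- all boundary terms vanish since $w_{T}$ and $\nabla w_{T}$ vanish on $\p T$ --- one gets $\int_{T}f\,w_{T}\,dx=\int_{T}\Delta u\,\Delta w_{T}\,dx$; since $\Delta u_{h}$ is constant and $\int_{T}\Delta w_{T}\,dx=\int_{\p T}\p w_{T}/\p n\,ds=0$, the discrete part drops and $\int_{T}f\,w_{T}\,dx=\int_{T}\Delta(u-u_{h})\,\Delta w_{T}\,dx$. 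Combining the bubble norm equivalence $\|\bar f\|_{T}^{2}\lesssim\int_{T}\bar f^{2}b_{T}^{2}\,dx$, the inverse estimate $\|\Delta w_{T}\|_{T}\lesssim h_{T}^{-2}\|\bar f\|_{T}$ and Cauchy--Schwarz gives $h_{T}^{2}\|\bar f\|_{T}\lesssim\|\Delta(u-u_{h})\|_{T}$, whence \eqref{8.35} after a triangle inequality with $f-\bar f$. The identical argument for the adjoint equation $\Delta^{2}\phi=u-u_{d}$, with the test function built from the $L_{2}$-projection of $u_{h}-u_{d}$ onto piecewise constants and the splitting $u-u_{d}=(u-u_{h})+(u_{h}-u_{d})$, gives \eqref{8.36}; the term $h_{T}^{2}\|u-u_{h}\|_{T}$ and the oscillation contribution come from this splitting and from projecting $u_{d}$.

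For the edge residuals \eqref{8.37}--\eqref{8.39} the crucial device is, on the edge patch $\cT_{e}=T_{+}\cup T_{-}$, a function $w_{e}\in H^{2}_{0}(\cT_{e})$ (extended by zero it lies in $H^{2}_{0}(\O)\subset V$) with $w_{e}|_{e}=0$, $w_{e}=\nabla w_{e}=0$ on $\p\cT_{e}$, and $\p w_{e}/\p n|_{e}=\sjump{\Delta q_{h}}$ (a constant), normalised by a scaling argument so that $\|\Delta w_{e}\|_{\cT_{e}}\lesssim|e|^{-1/2}\|\sjump{\Delta q_{h}}\|_{e}$. Integrating $\int_{\cT_{e}}\Delta(q-q_{h})\Delta w_{e}\,dx$ by parts on $T_{\pm}$ and using $\Delta^{2}(q-q_{h})=0$ a.\,e., $\sjump{\Delta q}=0$, $\p(\Delta q_{h})/\p n=0$ on $e$, and $w_{e}|_{e}=0$, one is left with exactly $-\int_{e}\sjump{\Delta q_{h}}\,\p w_{e}/\p n\,ds=-\|\sjump{\Delta q_{h}}\|_{e}^{2}$; Cauchy--Schwarz then yields $\|\sjump{\Delta q_{h}}\|_{e}\lesssim|e|^{-1/2}\sum_{T\in\cT_{e}}\|\Delta(q-q_{h})\|_{T}$, i.e.\ \eqref{8.37}. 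The same computation with $\Delta^{2}u=f$, resp.\ $\Delta^{2}\phi=u-u_{d}$, produces in addition the volume term $\int_{\cT_{e}}f\,w_{e}\,dx$, resp.\ $\int_{\cT_{e}}(u-u_{d})w_{e}\,dx$, which is absorbed via the scaling of $w_{e}$ and, in the adjoint case, the splitting $u-u_{d}=(u-u_{h})+(u_{h}-u_{d})$ plus oscillation; this gives \eqref{8.38} and \eqref{8.39}. Summing over $T$ and $e$ is harmless since each $T$ lies in only boundedly many patches $\cT_{e}$.

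The one genuinely delicate step is the construction of the patch test function $w_{e}$. Because the problem is fourth order, $w_{e}$ must be $C^{1}$ across $e$, must have \emph{both} vanishing trace and vanishing normal derivative on $\p\cT_{e}$, and must still carry a prescribed nonzero normal derivative along $e$; unlike the second order case, where a piecewise quadratic edge bubble suffices, this forces a construction on a reference patch made of two triangles glued along an edge, followed by an affine scaling argument, carried out as in \cite{BGS2010AC0IP,BSung2005DG4}. Everything else --- interior and edge inverse inequalities, the trace inequality, the standard bubble norm equivalences, and the triangle inequality bookkeeping of the oscillation terms --- is routine.
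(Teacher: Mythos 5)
Your overall strategy is the one the paper uses: interior bubble functions of class $H^{2}_{0}(T)$ for the volume residuals, a two-triangle patch function with a prescribed normal derivative along the shared edge for the jumps of the Laplacians, and the trivial observation $\sjump{\p q/\p n}=\sjump{\p u/\p n}=\sjump{\p\phi/\p n}=0$ for \eqref{8.40}--\eqref{8.42}. Two points, however, deserve attention. The first is a concrete inconsistency in your stated construction of the edge patch function: you require simultaneously $\nabla w_{e}=0$ on $\p\cT_{e}$ and $\p w_{e}/\p n|_{e}=\sjump{\Delta q_{h}}$, a nonzero constant. The two endpoints of $e$ lie on $\p\cT_{e}$, so these conditions contradict each other there, and the exact identity $\int_{e}\sjump{\Delta q_{h}}\,\p w_{e}/\p n\,ds=\|\sjump{\Delta q_{h}}\|_{e}^{2}$ is not attainable. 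The repair is exactly what the paper does: take $\omega=\zeta_{1}\zeta_{2}$ with $\p\zeta_{1}/\p n_{e}=\sjump{\Delta q_{h}}$, $\zeta_{1}|_{e}=0$, and a nonnegative weight $\zeta_{2}$ vanishing to first order on $\p\cT_{e}$ with $\int_{e}\zeta_{2}\,ds\approx|e|$; one then replaces the exact identity by the finite-dimensional norm equivalence $\|\sjump{\Delta q_{h}}\|_{e}^{2}\leq C\int_{e}\sjump{\Delta q_{h}}^{2}\zeta_{2}\,ds$. You acknowledge in your last paragraph that the construction is delicate and defer to the references, so this is a repairable imprecision rather than a wrong idea, but as written the key identity of your edge argument does not hold.

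The second point is that your argument leans on more regularity than the paper's. You integrate the exact solutions by parts element-wise, invoking $\Delta^{2}q=0$, $\sjump{\Delta q}=0$, $\nabla(\Delta u),\nabla(\Delta\phi),\nabla(\Delta q)\in H(div,\O)$; these come from Section \ref{error analysis:L^{2}} and Lemma \ref{lemma:regularity}, which are established only on convex domains, whereas the a posteriori bounds are claimed under minimal regularity. The paper avoids this entirely: it tests the \emph{weak} optimality system with the $H^{2}_{0}(\O)$ extensions of the bubble/patch functions (e.g.\ $a(u,\tilde{\Theta})=(f,\tilde{\Theta})$ and $a(q,\tilde{\omega})=0$) and integrates by parts only the piecewise polynomials $u_{h},\phi_{h},q_{h}$, for which all manipulations are exact. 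Your identities can be recovered this way without the extra regularity, and I would recommend phrasing them so. Finally, for \eqref{8.36} you work directly with $\Delta^{2}\phi=u-u_{d}$ and the splitting $u-u_{d}=(u-u_{h})+(u_{h}-u_{d})$, while the paper routes through the auxiliary adjoint $\phi_{a}$ of \eqref{8.3}; both yield the same bound, and your version is if anything slightly more direct, provided you use the bubble $b_{T}(u_{h}-\bar{u}_{d})$ (projecting only the data $u_{d}$, as the paper does) rather than projecting $u_{h}-u_{d}$ wholesale onto constants, since the oscillation of $u_{h}$ itself is not controlled by the right-hand side of \eqref{8.36}.
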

Proof: Let $T\in \mathcal{T}_{h}$ be arbitrary. Let $b_{T} \in  H_{0}^{2}(T) \cap  P_{6} (T)$ be an interior bubble function such that $\int_{T} b_{T} dx = |T|$ and $\Theta = b_{T} \bar{f}\in  H_{0}^{2}(T)$. Let $\tilde{\Theta}$ be the extension of $\Theta$ by $0$ outside $T$ which implies $\tilde{\Theta} \in  H_{0}^{2}(\Omega)$. After scaling, we find 
$\|\Theta\|_{T} \approx \|\bar{f}\|_{T}$.\\
Using triangle inequality, we find that
$$\eta_{1,T} = h_{T}^{2} \|f\|_{T} \leq C \bigg{(} h_{T}^{2} \|f-\bar{f}\|_{T} + \|\bar{f}\|_{T}\bigg{)},$$
where
\begin{align*}
\|\bar{f}\|_{T}^{2} = \int_{T} \bar{f}\,\Theta\,dx &= \int_{T} (\bar{f}-f)\,\Theta\,dx + \int_{T} f\, \Theta\,dx\\
 &= \int_{T} (\bar{f}-f)\,\Theta\,dx + \int_{\Omega} f\,\tilde{\Theta}\,dx\\
&=\int_{T} (\bar{f}-f)\,\Theta\,dx + a(u,\tilde{\Theta}).
\end{align*}
Since, $u_{h}|_{T} \in P_{2} (T)$ and  $\Theta  \in  H_{0}^{2} (T)$, we find
\begin{align}\label{eq:ip}
\sum_{T\in \cT_h} \int_{T} \Delta u_{h} \: \Delta \tilde{\Theta}\,dx=-\int_{T} \nabla(\Delta u_{h})\cdot\nabla \Theta\,dx
+ \int_{\partial T} \frac{\partial \Theta}{\partial n}(\Delta u_{h})\,ds =0.
\end{align}
Therefore, the use of \eqref{eq:ip}, Cauchy-Schwarz and inverse inequality yield
\begin{align*}
\|\bar{f}\|_{T}^{2} &= \int_{T} ( \bar{f} - f)\,\Theta\,dx + a(u, \tilde{\Theta}) - \tilde{a}_{h}(u_{h},\tilde{\Theta} )\\
 &= \int_{T} ( \bar{f} - f)\,\Theta\,dx + \tilde{a}_{h}(u - u_{h}, \tilde{\Theta})\\
&\leq C \bigg{(} \|f-\bar{f}\|_{T}\: \|\Theta\|_{T} + \|\Delta (u-u_{h}\|_{T}\, \|\Delta\Theta\|_{T} \bigg{)}\\
& \leq C\bigg{(} \|f-\bar{f}\|_{T} + h_{T}^{-2} \|\Delta(u-u_{h})\|_{T} \bigg{)}\: \|\Theta\|_{T}.
\end{align*}
Since, $\|\Theta\|_{T} \approx \|\bar{f}\|_{T}$, we find that
\begin{align*}
 h_{T}^{2}\|\bar{f}\|_{T} &\leq C \bigg{(} h_{T}^{2}\|f-\bar{f}\|_{T} + \|\Delta(u-u_{h})\|_{T} \bigg{)},
\end{align*}
and hence using triangle inequality, we have
\begin{equation*}
 \eta_{1,T} \leq C \bigg{(} h_{T}^{2} \|f-\bar{f}\|_{T} +  \|\Delta(u-u_{h})\|_{T} \bigg{)},
\end{equation*}
which completes the proof of \eqref{8.35}.\\
Let $T \in \mathcal{T}_{h}$ be arbitrary. Let $b_{T}\: \in \: H_{0}^{2} (T)\: \cap \: P_{6} (T)$ be an interior bubble function such that $\int_{T} b_{T}\,dx = |T|$. Define $\theta$ on $T$ by $\theta := b_{T}(u_{h}-\bar{u}_{d}).$
After scaling, we find that $\|\theta\|_{T} \approx\|u-\bar{u}_{d}\|_{T}$. Let $\tilde{\theta} \in  H_{0}^{2}(\Omega)$ be the extension of $\theta$ by $0$ outside $T$.
Consider
\begin{align*}
\|u_{h}-\bar{u}_{d}\|_{T}^{2}&=\int_{T} (u_{h} - \bar{u}_{d})\,\theta\,dx\\
 &=\int_{T}(u_{h} - u_{d})\,\theta\,dx + \int_{T} (u_{d} - \bar{u}_{d})\,\theta\,dx\\
& = \int_{\Omega} (u_{h} - u_{d})\,\tilde{\theta}\,dx + \int_{T} (u_{d}-\bar{u}_{d})\,\theta\,dx\\
& = a(\phi_{a},\tilde{\theta}) + \int_{T} (u_{d}-\bar{u}_{d})\,\theta\,dx.
\end{align*}
Now, using integration by parts and the fact that $\theta\in H_{0}^{2} (T)$ and $ \phi_{h}|_{T} \in  P_{2}(T)\quad \forall~~ T  \in  \mathcal{T}_{h}$, we find that
\begin{align*}
\tilde{a}_{h}(\phi_{h}, \tilde{\theta}) = \sum_{T\in \cT_h} \int_{T} \Delta \phi_{h}\, \Delta \tilde{\theta}\,dx=-\int_{T} \nabla \Delta \phi_{h} \cdot \nabla \theta\, dx + \int_{\partial T}\frac{\partial \theta}{\partial n}\, \Delta \phi_{h}\,ds = 0.
\end{align*} 
Therefore, using Cauchy-Schwarz and inverse inequality, we arrive at
\begin{align*}
\|u_{h} - \bar{u_{d}}\|_{T}^{2} &= a(\phi_{a},\tilde{\theta}) + \int_{T} (u_{d} - \bar{u}_{d})\,\theta\,dx - \tilde{a}_{h}(\phi_{h}, \tilde{\theta})\\
& = \tilde{a_{h}}(\phi_{a} - \phi_{h},\tilde{\theta}) + \int_{T} (u_{d} - \bar{u}_{d})\,\theta\,dx\\
 &= \tilde{a}_{h}(\phi_{a} - \phi,\tilde{\theta})+ \tilde{a}_{h}(\phi - \phi_{h},\tilde{\theta}) + \int_{T} (u_{d} - \bar{u}_{d})\,\theta\,dx\\
 &= (u_{h} - u,\tilde{\theta})+ \tilde{a}_{h}(\phi - \phi_{h},\tilde{\theta}) + \int_{T} (u_{d} - \bar{u}_{d})\,\theta\,dx\\
 &= (u_{h} - u,\tilde{\theta})+ \int_{T}  \Delta(\phi - \phi_{h})\,\Delta \theta\,dx + \int_{T} (u_{d} - \bar{u}_{d})\,\theta\,dx\\
&\leq C \bigg{(} \|u - u_{h}\|_{T} + h_{T}^{-2}\|\Delta(\phi - \phi_{h})\|_{T} + \|u_{d} - \bar{u}_{d}\|_{T} \bigg{)}\|\theta\|_{T}.
\end{align*}
A use of the fact that $\|\theta\|_{T} \approx \: \|u- \bar{u}_{d}\|_{T}$  yields
\begin{align*}
 h_{T}^{2}\|u_{h} - \bar{u}_{d}\|_{T}&\leq C \bigg{(} h_{T}^{2}\|u - u_{h}\| +  \|\Delta(\phi - \phi_{h})\|_{T} + h_{T}^{2}\|u_{d} - \bar{u}_{d}\|_{T} \bigg{)},
\end{align*}
and hence using triangle inequality, we get
\begin{equation*}
\eta_{2,T}\:\:\leq C \bigg{(} h_{T}^{2}\|u - u_{h}\|_{T} +  \|\Delta\: (\phi - \phi_{h})\|_{T} + h_{T}^{2}\|u_{d} - \bar{u}_{d}\|_{T} \bigg{)}. 
\end{equation*}
This completes the proof of \eqref{8.36}.\\
Now we derive the estimate \eqref{8.37}, let $e \in \mathcal{E}_{h}^i$ be arbitrary.
Let $ T_{+} \:\:\& \:\: T_{-}$ be the triangles sharing the same edge $e$ and set $T_{e} = T_{+} \cup T_{-}$. Let $n_{e}$ be the unit normal of $e$ pointing from $T_{-} \:\: \text{to} \:\: T_{+}$. Let $\beta \: \in \: P_{0}(T)$ such that $\beta = \sjump{\Delta q_{h}}_{e}$ on $e$ and is constant on the lines perpendicular to $e$. Define $\zeta_{1} \:\in \: P_{1}(T_{e})$ such that $\zeta_{1} = 0$ on $e$ and $ \frac{\partial \zeta_{1}}{\partial \eta_{e}} = \beta$.
By using standard scaling arguments, we find
\begin{equation*}
|\zeta_{1}|_{H^{1}(T_{e}}) \approx |e|\: |\beta| = \eta_{3,e}
\end{equation*}
and
\begin{equation*}
\|\zeta_{1}\|_{L_{\infty}(T_{e}}) \approx |e|\: |\beta| = \eta_{3,e}.
\end{equation*}
Next, we define $\zeta_{2} \: \in \: P_{8} (T_{e})$ such that\\
  i)$\zeta_{2}$ vanishes upto first order on $\partial T_{e}$.\\
 ii)$\zeta_{2}$ is positive on $e$.\\
iii)$\int_{T_{e}} \zeta_{2}\,dx = |T_{+}| + |T_{-}|$.\\
Again, using scaling, we find
\begin{equation*}
\int_{e} \zeta_{2}\,ds \approx |e|,
\end{equation*} 
and
\begin{equation*}
|\zeta_{2}|_{H^{1}(T_{e})} \approx ||\zeta_{2}||_{L_{\infty}(T_{e})}.
\end{equation*} 
Using the norm equivalence on the finite dimensional spaces along with the fact that $\zeta_{1} = 0$ on $e$, we arrive at
\begin{align}
\|\sjump{\Delta q_{h}}\|_{e}^{2} &\leq C \int_{e} \sjump{\Delta q_{h}}^{2}\,\zeta_{2}\,ds\notag\\
&= C \int_{e} \sjump{\Delta q_{h}}\,\beta\,\zeta_{2}\,ds\notag\\
&= C \int_{e} \sjump{\Delta q_{h}}\,\frac{\partial \zeta_{1}}{\partial \eta_{e}}\,\zeta_{2}\,ds\notag\\
&= C \int_{e} \sjump{\Delta q_{h}}\,\frac{\partial (\zeta_{1} \zeta_{2})}{\partial \eta_{e}}\,ds .\label{8.43}
\end{align}
Now, let $ \omega = \zeta_{1}\,\zeta_{2} \in H_{0}^{2} (T_{e})$ and $\bar{\omega}  \in  H_{0}^{2}(\Omega)$ be the extension of $\omega$ to $\Omega$ by $0$ outside $T_{e}$.
Since, $\tilde{\omega}  \in  H_{0}^{2} (\Omega)$, we have
\begin{align*}
\sum_{T\in \cT_h} \int_{T}\Delta q_{h}\,\Delta \tilde{\omega}\,dx &= - \sum_{T\in \cT_h} \int_{T}(\nabla \Delta q_{h})\cdot\nabla \tilde{\omega}\,dx + \sum_{T} \int_{\partial T} \,\frac{\partial \tilde{\omega}}{\partial n}\, \Delta q_{h}\,ds\\
 &= - \sum_{e \in  \mathcal{E}_{h}} \int_{e} \smean{\Delta q_{h}}\,\sjump{\partial \tilde{\omega}/\partial n}\,ds - \sum_{e \in \mathcal{E}_{h}^{i}} \int_{e} \smean{\partial \tilde{\omega}/\partial n}\,\sjump{\Delta q_{h}}\,ds\\
 &=-\int_{e} \sjump{\Delta q_{h}}\,\frac{\partial \omega}{\partial n}\,ds,
\end{align*}
and hence using Cauchy-Schwarz inequality and the argument that $\tilde{\omega} \in H_{0}^{2} (\Omega)$, we arrive at
\begin{align*}
 \int_{e}\sjump{\Delta q_{h}}\,\frac{\partial \omega}{\partial n}\,ds &= - \sum_{T \in \mathcal{T}_{e}} \int_{T} \Delta q_{h}\,\Delta \omega\,dx\\
& = \sum_{T\in \mathcal{T}_{e}} \int_{T} \Delta (q-q_{h})\,\Delta \omega\,dx - \sum_{T\in\mathcal{T}_{e}}\int_{T} \Delta q\, \Delta \omega\,dx\\
& = \sum_{T \in  \mathcal{T}_{e}} \int_{T} \Delta (q-q_{h})\,\Delta \omega\,dx - \sum_{T\in \mathcal{T}_h}\int_{T}\Delta q\,\Delta \tilde{\omega}\,dx\\
& = \sum_{T \in \mathcal{T}_{e}} \int_{T} \Delta (q-q_{h})\,\Delta \omega\,dx - a(q, \tilde{\omega})\\
&\leq C (\|\Delta (q -q_{h})\|_{T_{e}}\|\Delta \omega\|_{T_{e}}).
\end{align*}
A use of the following argument
\begin{equation*}
\|\Delta  \omega\|_{T_{e}} \leq C |e|^{-1} |\omega|_{H^{1}(T_{e})},
\end{equation*}
where
\begin{equation*}
|\omega|_{H^{1}(T_{e})} \leq C\left( |\zeta_{1}|_{L_{\infty}(T_{e})}\,|\zeta_{2}|_{H^{1}(T_{e})} + |\zeta_{1}|_{H^{1}(T_{e})}\,|\zeta_{2}|_{L_{\infty}(T_{e})}\right)
\leq C\,\eta_{e,3},
\end{equation*}
along with (\ref{8.43}) yields
\begin{align*}
\|\sjump{\Delta q_{h}}\|_{e}^{2} &\leq C \int_{e}\sjump{\Delta q_{n}}\,\frac{\partial \omega}{\partial n}\,ds\\
&\leq C\,\|\sjump{\Delta (q-q_{h})}\|_{T_{e}}\,\|\Delta \omega\|_{T_{e}}\\
 &\leq C\,|e|^{-1} \|\sjump{\Delta (q-q_{h})}\|_{T_{e}}\,|\omega|_{H^{1}(T_{e})} \\
& \leq C\,|e|^{-1} \|\sjump{\Delta (q-q_{h})}\|_{T_{e}}\,\eta_{e,3},
\end{align*}
and hence,
\begin{equation*}
\eta_{3,e}^{2} = |e|\|\sjump{\Delta q_{h}}\|_{e}^{2} \leq C\|\sjump{\Delta (q-q_{h})}\|_{T_{e}}\,\eta_{e,3} .
\end{equation*}
Therefore,
\begin{equation*}
\eta_{3,e} \leq C \sum_{T \in T_{e}} \|\Delta(q-q_{h})\|_{T}.
\end{equation*}
Proceeding in the same manner as above, we find that
\begin{equation*}
\eta_{4,e} \leq C\sum_{T \in\mathcal{T}_{e}} \left( \|\Delta(u-u_{h})\|_{T}+ h_{T}^{2}\|f\|_{T}\right)
\end{equation*}
and
\begin{align*}
\eta_{5,e} &\leq C \sum_{T \in \mathcal{T}_{e}}\left( \|\Delta(\phi-\phi_{h})||_{T}+ h_{T}^{2}\|u_{h}-u_{d}\|_{T}+h_{T}^{2}\|u-u_{h}\|_{T}\right)\\
&\leq C \sum_{T \in  T_{e}} \left( \|\Delta(\phi-\phi_{h})\|_{T}+ h_{T}^{2}\|u-u_{h}\|_{T}+h_{T}^{2}\|u_{d}-\bar{u}_{d}\|_{T}\right).
\end{align*}
Since, $q\in Q$, we have
\begin{equation*}
\eta_{6,e}^{2}  = \frac{1}{|e|} \|\sjump{\partial q_{h}/\partial n}\|_{e}^{2}
 = \frac{1}{|e|} \|\sjump{\partial (q-q_{h})/\partial n}\|_{e}^{2} ,
\end{equation*}
and hence,
\begin{equation*}
\sum_{e\in \mathcal{E}_{h}}\eta_{6,e}^{2} \leq ||q-q_{h}||_{h}^{2}.
\end{equation*}
Similarly, the other estimates \eqref{8.41}, \eqref{8.42} also follow. This completes the proof of the theorem.
\section{ Numerical Examples}\label{sec:Numerical Examples}
In this section, we verify the theoretical findings by
conducting two numerical experiments. In the first example, we validate the {\em a priori} error estimates derived in the energy norm and $L_{2}$-norm established in Theorem \ref{thm:EnergyEstimate}, Theorem \ref{theorem:State_andAdjointState} and Theorem
\ref{thm:L2estimate}. In the second example, we test the performance of the a posteriori error estimators derived in the Theorem \ref{thm8.1} and Theorem \ref{thm8.2}. The MATLAB software has been used for all the computations. To this end, we construct the model problem with known
solution. For the ease of constructing numerical example with known solution, we modify the model problem by adding an {\em a priori} control $p_d \in Q$ in the cost functional $J(\cdot,\cdot)$. The modified optimal control problem reads as:
\begin{eqnarray*}
\min_{p\in Q}\tilde{J}(u,p):=\frac{1}{2}||u-u_{d}||^{2}+\frac{\alpha}{2}|p-p_{d}|_{H^{2}(\Omega)}^{2},
\end{eqnarray*}
subject to the condition that $(u,p)\in Q\times Q$ such that $u$ is the weak solution of \eqref{b2}. The first order optimality system takes the form:
\begin{align*}
u&=u_{f}+p,\\
a(u_f,v)&=(f,v)-a(p,v)\ \ \ \forall \ v\in V,\\
a(\phi,v)&=(u-u_{d},v) \ \ \ \forall\ v\in V,\\
\alpha a(p,\tilde{p})&=a(\phi,\tilde{p})-(u-u_{d},\tilde{p})+\alpha a(p_d,\tilde{p})\ \ \ \forall\ \tilde{p}\in Q.
\end{align*}
Similarly, we can write the discrete optimality system as well. 
\begin{example}\label{aprioriexmp}
In this example, we consider the domain as $\Omega =(0,1)\times(0,1)$ together with the following data: 
\begin{align*}
    u(x,y)&=sin^{2}\pi x\,sin^{2}\pi y+cos\pi x\,cos\pi y,\\
    \phi(x,y)&=sin^{4}\pi x\,sin^{4}\pi y,\\
    p(x,y)&=cos\pi x\,cos\pi y,\ \ u_{d}(x,y) = u(x,y) - \Delta^{2}\phi(x,y),\\
    f(x,y)&=\Delta^{2}u(x,y), \ \ p_{d}(x,y)=p(x,y),\ \ \alpha= 1.
\end{align*}
 The mesh is refined uniformly to confirm  a priori convergence order. The computed errors and orders of convergence in the energy norm and $L_{2}$-norm for all the variables are shown in
 Table \ref{table:H2Y1} and Table \ref{3table:H2Y2}, respectively.
 The example clearly shows the expected rates of convergence. Comparison of the plots of the exact and discrete control, adjoint state and state are shown in the Figures \ref{fig5}, \ref{fig6}
and \ref{fig7} respectively.
\end{example}
\begin{table}[H]
\begin{center}
\begin{tabular}{|c|c|c|c|c|c|c|c|}\hline
 $h$ & $\|u-u_h\|_h $  & order &  $\|\phi-\phi_h\|_h$ & order & $\|q-q_h\|_{h}$ & order\\
\hline\\[-12pt]
 1/4   & 8.33697  &  --     &  13.4214  &   --     &  5.00562 &  --  \\
 1/8   & 4.07916   & 1.0312  &  7.05252  &   0.9283 &  1.78224 & 1.4899     \\
 1/16  & 2.01182   & 1.0198  &  3.74453  &   0.9134 &  0.88300 & 1.0132 \\
 1/32  & 0.98034    & 1.0371  &  1.79272  &   1.0626 &  0.42386 & 1.0588   \\
 1/64  & 0.48293    & 1.0215  &  0.85754  &   1.0639 &  0.20550 & 1.0445 \\
 1/128 & 0.23997    & 1.0089  &  0.41981  &   1.0305  &  0.10162 & 1.0159  \\
\hline
\end{tabular}
\end{center}

\par\medskip
\caption{Errors and orders of convergence in energy norm.}
\label{table:H2Y1}
\end{table}

\begin{table}[ht]
\begin{center}
\begin{tabular}{|c|c|c|c|c|c|c|c|}\hline
 $h$ & $\|u-u_h\| $  & order &  $\|\phi-\phi_h\|$ & order & $\|q-q_h\|$ & order\\
\hline\\[-12pt]
 1/4   & 519.515  &  --     &  0.39291  &   --     &  519.610 &  --  \\
 1/8   & 0.04198   & 13.595  &  0.05212  &   2.9144 &  0.04101 & 13.629     \\
 1/16  & 0.01239   & 1.7604  &  0.01805  &   1.5295 &  0.01336 & 1.6186 \\
 1/32  & 0.00334    & 1.8929  &  0.00528  &   1.7732 &  0.00379 & 1.8139   \\
 1/64  & 0.00086    & 1.9561  &  0.00141  &   1.9076 &  0.00099 & 1.9299 \\
 1/128 & 0.00022    & 1.9834  &  0.00036  &   1.9665  &  0.00025 & 1.9756  \\
\hline
\end{tabular}
\end{center}
\par\medskip
\caption{Errors and orders of convergence in  $L_{2}$-norm.}
\label{3table:H2Y2}
\end{table}
\begin{figure}[htp]
    \centering
    \includegraphics[height=8.25cm,width=12cm]{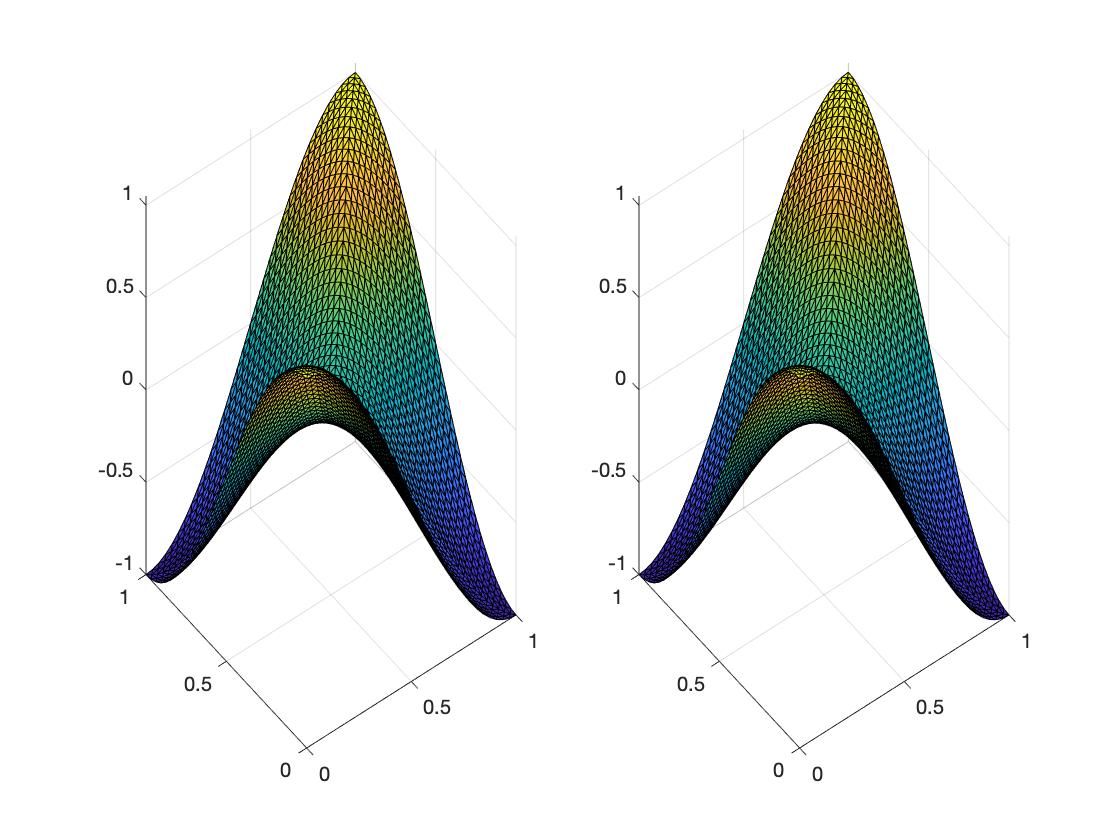}
    \caption{Comparison between the computed(left) and exact control(right).}
    \label{fig5}
\end{figure}
\begin{figure}[htp]
    \centering
    \includegraphics[height=8.25cm,width=12cm]{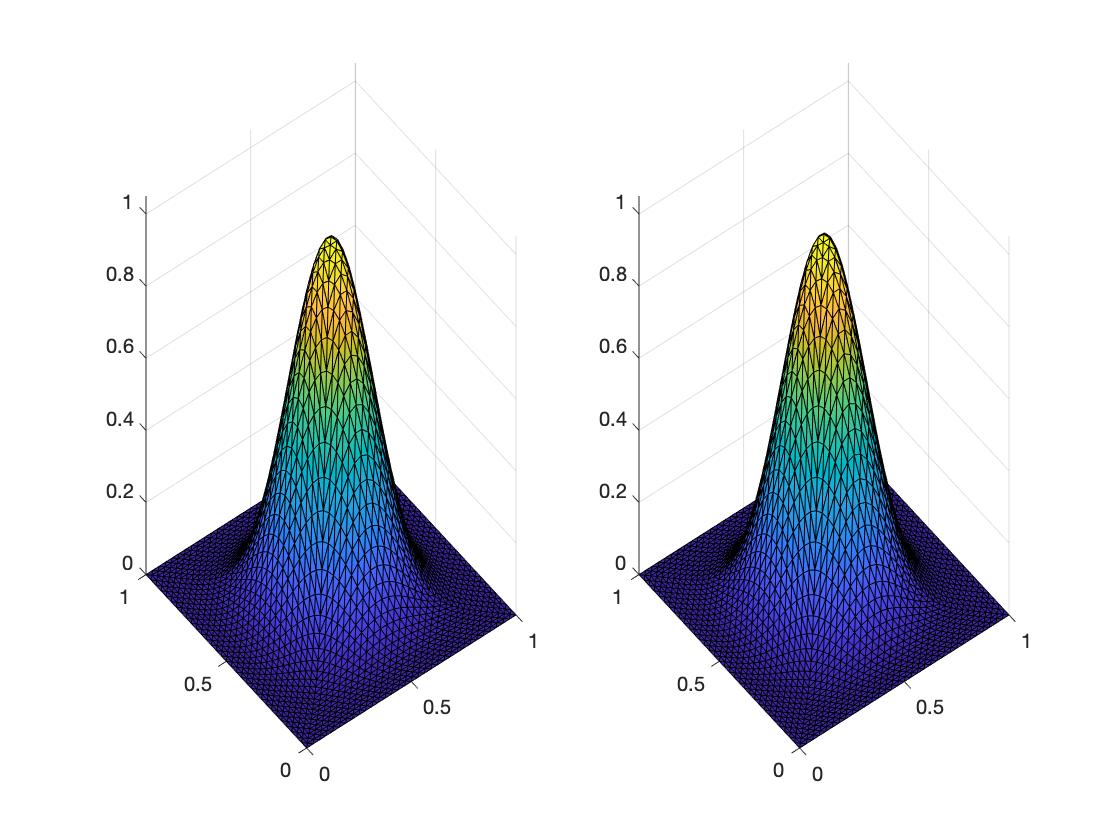}
    \caption{Comparison between the computed(left) and exact adjoint state(right).}
    \label{fig6}
\end{figure}
\begin{figure}[htp]
    \centering
    \includegraphics[height=8.25cm,width=12cm]{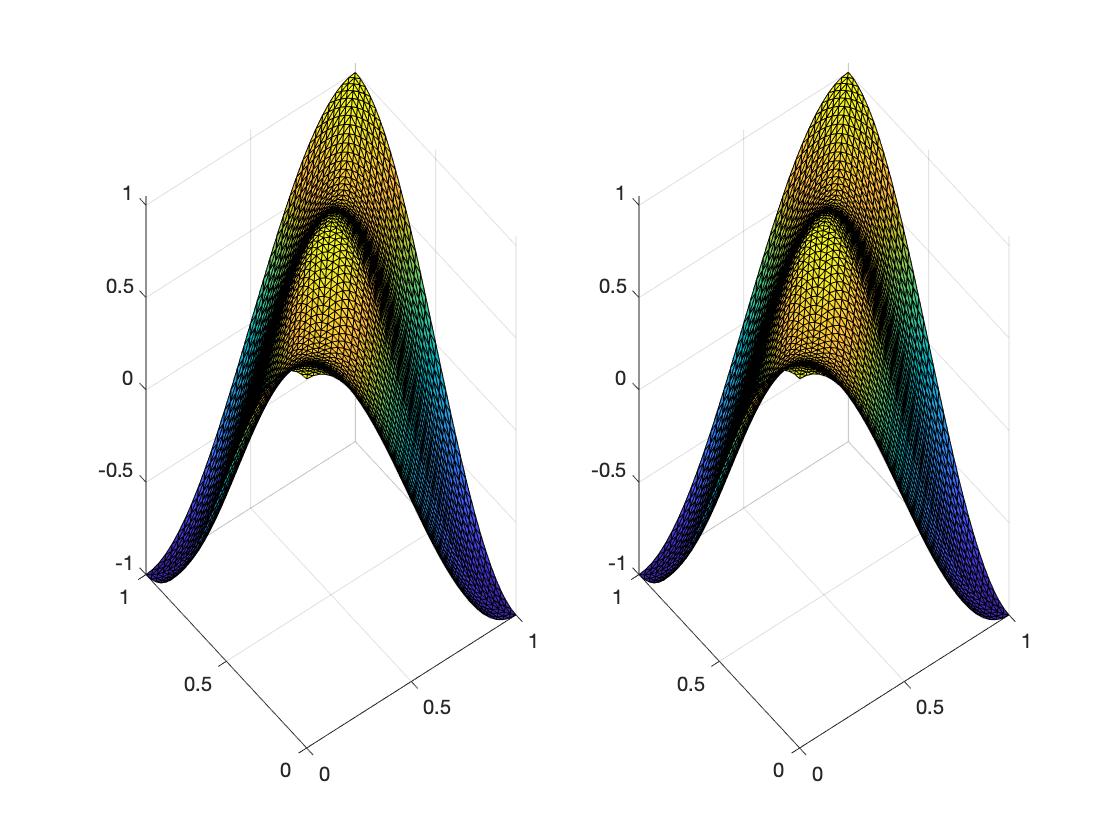}
    \caption{Comparison between the computed(left) and exact state(right).}
    \label{fig7}
\end{figure}
\begin{example}
In this example, we consider the same data as in Example \ref{aprioriexmp}. Here, instead of uniform mesh refinement, the following adaptive strategy is used for mesh refinement :
\begin{center}
   \textbf{SOLVE} $\rightarrow$ \textbf{ESTIMATE} $\rightarrow$ \textbf{MARK} $\rightarrow$ \textbf{REFINE}. 
\end{center}
In the \textbf{ESTIMATE} step, we calculate the error estimator $\eta$. Then, we use the D\"orfler's marking strategy \cite{Dorlfer1996Marking} with parameter $\theta = 0.4$ for marking the elements for refinement. Using the newest vertex bisection algorithm, the marked elements are refined to obtain a new adaptive mesh. The convergence of the error and the estimator can be observed from Figure \ref{fig8}. The error and the estimator converges with a linear rate which is optimal. Figure \ref{fig8} clearly depicts the behaviour between the error estimator $\eta$ and the total error $|||q-q_h|||_{h}+|||y-y_h|||_h+|||r-r_h|||_h$ with increasing number of total degrees of freedom(total number of unknowns for optimal state $y$, optimal control $q$ and optimal adjoint state $r$). Figure \ref{fig9} shows the efficiency of the error estimator using the efficiency indices(estimator/total error). The adaptive mesh refinement is depicted through Figure \ref{fig10}.
\end{example}
\begin{figure}[htp]
    \centering
    \includegraphics[height=8.25cm,width=12cm]{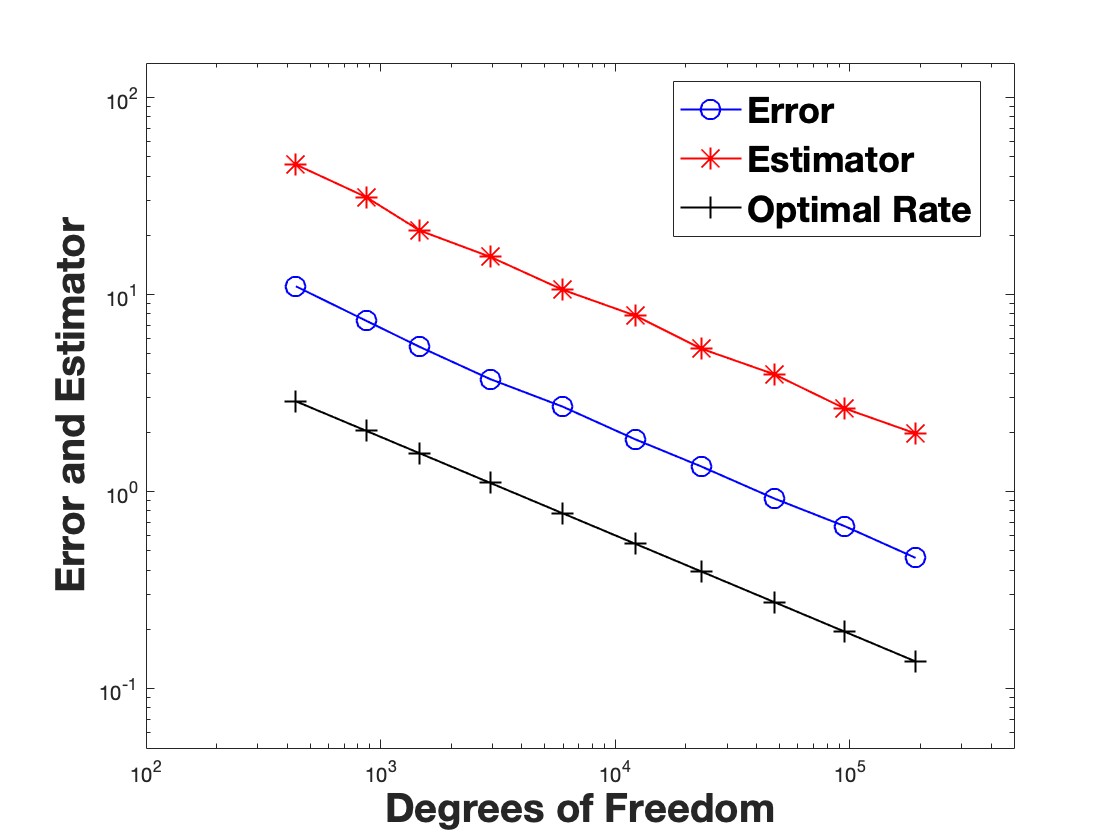}
    \caption{Error and estimator.}
    \label{fig8}
\end{figure}
\begin{figure}[htp]
    \centering
    \includegraphics[height=8.25cm,width=12cm]{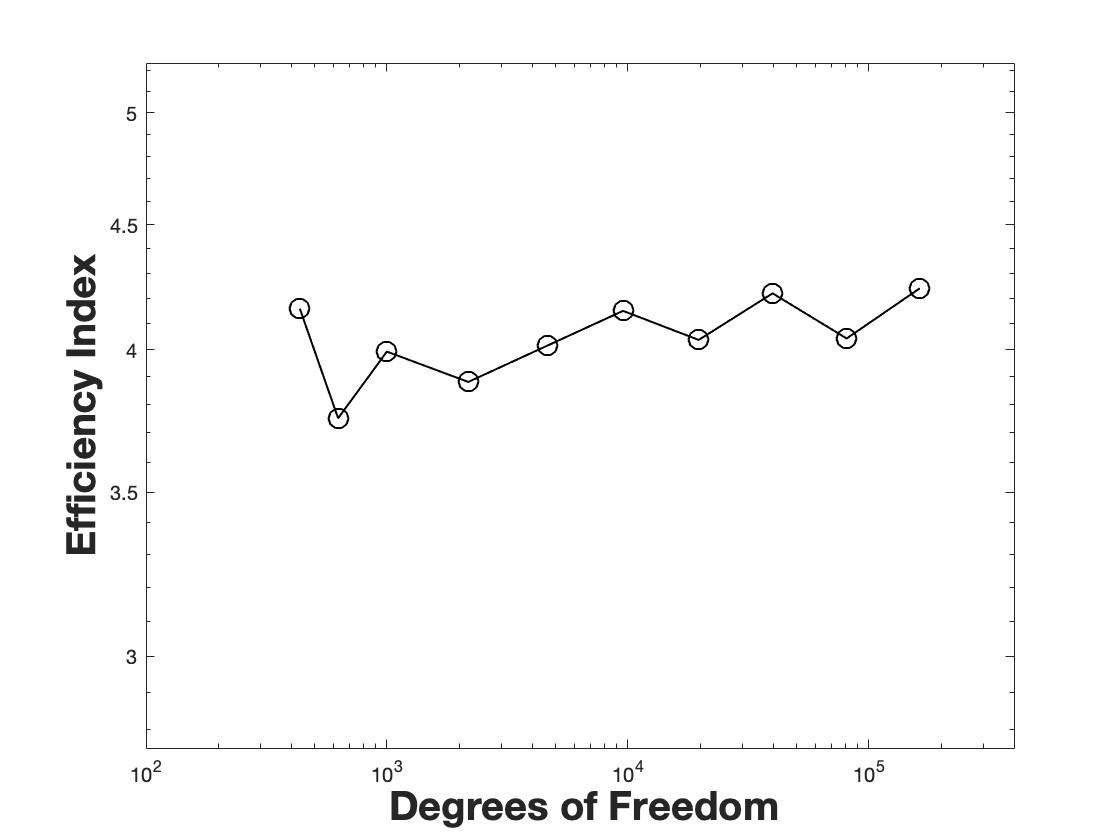}
    \caption{Efficiency Index}
    \label{fig9}
\end{figure}
\begin{figure}[htp]
    \centering
    \includegraphics[height=8.25cm,width=12cm]{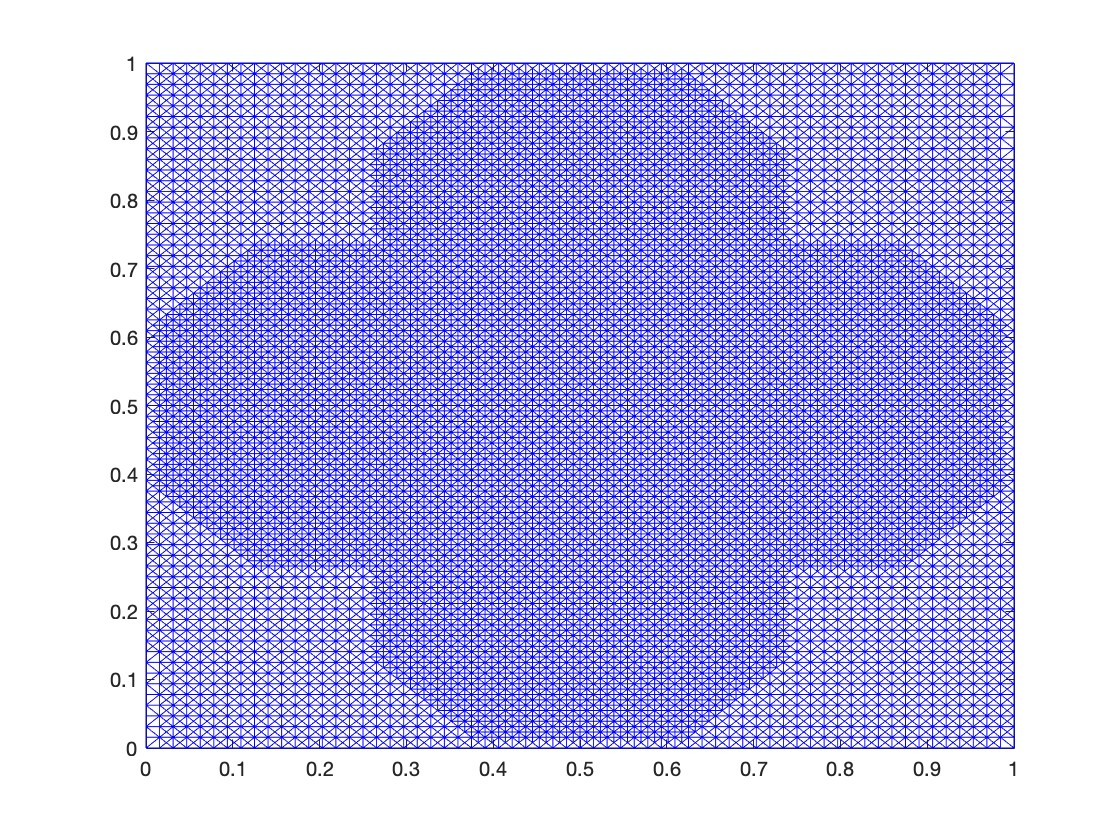}
    \caption{Adaptive mesh refinement}
    \label{fig10}
\end{figure}

\section{Conclusion}\label{sec:Conclusions3}
In this article, we have derived  the $L_{2}$-norm error estimate for the solution of a Dirichlet boundary control problem on a general convex polygonal domain. Additionally we have derived the a posteriori error bounds for optimal control, optimal state and adjoint state variables in  Section \ref{Apea}. Our next aim would be to study the $C^{0}$ interior penalty and classical non-conforming analysis of the control constrained version of this problem. 

\end{document}